\documentclass[a4paper, 11pt]{amsart}

\usepackage{amsfonts, amsmath, amssymb,amsthm,bbm,comment,fancyhdr,mathrsfs,verbatim,mathabx}


\usepackage{a4wide}

\newcommand{\seq}{\subseteq}

\newcommand{\vs}{\vspace*}

\newcommand{\nin}{\noindent}
\newtheorem{mthm}{Theorem}[section]
\newtheorem{mylem}[mthm]{Lemma}
\newtheorem{myprn}[mthm]{Proposition}
\newtheorem{mycor}[mthm]{Corollary}
\newtheorem{mydef}[mthm]{Definition}
\newtheorem{myrem}[mthm]{Remark}
\newtheorem{mycon}[mthm]{Construction}
\newtheorem{myeg} [mthm]{Example}
\newtheorem{myque} [mthm]{Question}
\newtheorem{myalg} [mthm]{Algorithm}

\newenvironment{lem}{\begin{mylem}}{\end{mylem}}

\newenvironment{rem}{\begin{myrem}\rm}{\end{myrem}}

\def \nin {\noindent}

\def \Lemma #1 {\vs{3mm}\nin {\bf Lemma #1} \it}
\def \Prop #1 {\vs{3mm}\nin {\bf Proposition #1} \it}
\def \Th #1 {\vs{3mm}\nin {\bf Theorem #1} \it}
\def \Cor #1 {\vs{3mm}\nin {\bf Corollary #1} \it}
\def \Ex #1 {\vs{3mm}\nin {\bf Example #1} \it}

\def \part #1 {\hfil\break\hglue 12pt {\rm (#1)~}}

\usepackage{amsbsy, amsmath, amsfonts, braket, latexsym, amsthm, amsxtra, amscd, graphics, mathrsfs, verbatim}

\usepackage{amssymb}

\usepackage[usenames,dvipsnames]{xcolor}

\usepackage[shortlabels,inline]{enumitem}
\SetEnumerateShortLabel{a}{\textup{(\alph*)}}
\SetEnumerateShortLabel{A}{\textup{(\Alph*)}}
\SetEnumerateShortLabel{1}{\textup{(\arabic*)}}
\SetEnumerateShortLabel{i}{\textup{(\roman*)}}
\SetEnumerateShortLabel{I}{\textup{(\Roman*)}}


\usepackage[mathscr]{euscript}

\usepackage[pdfborder={0 0 0}]{hyperref} 
\hypersetup{breaklinks, raiselinks}

\usepackage[initials,lite,alphabetic]{amsrefs} 

\usepackage[all]{xy}

\theoremstyle{plain}

\newtheorem{theorem}[mthm]{Theorem}

\newtheorem{lemma}[mthm]{Lemma}

\newtheorem*{proper*}{Property}
\newtheorem{proposition}[mthm]{Proposition}

\newtheorem{corollary}[mthm]{Corollary}

\newtheorem*{lm*}{Lemma}
\newtheorem*{mthm*}{Theorem}

\theoremstyle{definition}

\newtheorem{definition}[mthm]{Definition}

\newtheorem*{df*}{Definition}

\newtheorem{example}[mthm]{Example}

\newtheorem{ex-notn}[mthm]{Example/Notation}

\theoremstyle{remark}

\newtheorem{remark}[mthm]{Remark}

\newtheorem*{acknowledgement*}{Acknowledgement}

\newtheorem*{ex*}{Example}
\newtheorem*{exer*}{Exercise}
\newtheorem*{rem*}{Remark}
\newtheorem*{prob*}{Problem}
\newtheorem*{prop*}{Proposition}





\def\diam{\operatorname{diam}}



\def\g{\operatorname{g}}




\def\Ini{\operatorname{Ini}} 

\def\Inv{\operatorname{Inv}}




\def\link{\operatorname{link}}





\def\pure{\operatorname{pure}}







\def\KK{{\mathbb K}}

\def\NN{{\mathbb N}}

\def\ZZ{{\mathbb Z}}

\def\calE{\mathcal{E}}
\def\calF{\mathcal{F}}

\def\calV{\mathcal{V}}


\usepackage{bm}

\def\bdc{{\bm c}}

\def\bdd{{\bm d}}

\def\bdh{{\bm h}}

\def\bdm{{\bm m}}

\def\bdx{{\bm x}}

\def\alert#1{\textcolor{Magenta}{#1}}
\def\bar#1{\overline{#1}}

\def\dis{\operatorname{dis}}

\def\Index#1{\emph{#1}}



\def\coloneqq{\mathrel{\mathop:}=}

\def\sqr#1#2{{\vcenter{\hrule height.#2pt
        \hbox{\vrule width.#2pt height#1pt \kern#1pt
                \vrule width.#2pt}
        \hrule height.#2pt}}}





%



\def\opn#1#2{\def#1{\operatorname{#2}}} 
\opn\rev{rev}
\opn\Lex{Lex}
\opn\GL{GL}
\opn\initial{in}

\usepackage{floatrow}
\usepackage{caption}
\DeclareCaptionSubType[alph]{figure}
\captionsetup[subfigure]{labelformat=brace,textfont=md,labelfont=up}


%
%

\begin{document}
\title{Strong shellability of simplicial complexes}

\author{Jin Guo}
\address{College of Information Science and Technology, Hainan University, Haikou, Hainan, 570228, P.R. China}
\email{guojinecho@163.com}

\author{Yi-Huang Shen}
\address{Wu Wen-Tsun Key Laboratory of Mathematics of CAS and School of Mathematical Sciences, University of Science and Technology of China, Hefei, Anhui, 230026, P.R. China}
\email{yhshen@ustc.edu.cn}

\author{Tongsuo Wu}
\address{Department of Mathematics, Shanghai Jiaotong University, Minhang, Shanghai, 200240, P.R. China}
\email{tswu@sjtu.edu.cn}


\begin{abstract}
    Imposing a strong condition on the linear order of shellable complexes, we introduce strong shellability. Basic properties, including the existence of dimension-decreasing strong shelling orders, are developed with respect to  nonpure strongly shellable complexes. Meanwhile, pure strongly shellable complexes can be characterized by the corresponding codimension one graphs. In addition, we show that the facet ideals of pure strongly shellable complexes have linear quotients.
\end{abstract}

\thanks{This research is supported by the National Natural Science Foundation of China (Grant No.~11201445, 11526065 and 11271250) and  the Fundamental Research Funds for the Central Universities. }

\keywords{Simplicial complex; Strongly shellable; Linear quotients; Cohen-Macaulayness}

\subjclass[2010]{13F55, 
05E45, 
13F20. 
}

\maketitle

\section{Introduction }

Recall that a \Index{simplicial complex} $\Delta$ on a vertex set $\calV(\Delta)=V$ is a finite subset of $2^{V}$, such that $A \in \Delta$ and $B \seq A$ implies $B \in \Delta$. The set $A$ is called a \Index{face} if $A \in \Delta$, and called a \Index{facet} if $A$ is a maximal face with respect to inclusion.
The set of facets of $\Delta$ will be denoted by $\calF(\Delta)$. Typically, for the complexes considered here, the vertex set is the set $[n]\coloneqq\Set{1,2,\dots,n}$ for some $n\in \ZZ_+$.

When $\calF(\Delta)=\Set{F_1, \cdots, F_m}$, we write $\Delta=\braket{ F_1, \cdots, F_m }$.
In particular, if $\Delta$ has a unique facet $F$, then $\Delta$ is called a \Index{simplex}.
The \Index{dimension} of a face $A$, denoted by $\dim(A)$, is $|A|-1$.  The \Index{dimension} of a complex $\Delta$, denoted by $\dim(\Delta)$, is the maximum dimension of its faces. The complex $\Delta$ is called \Index{pure} if all the facets of $\Delta$ have the same dimension; otherwise, it will be called \Index{nonpure}.

Recall that a $d$-dimensional pure complex $\Delta$ on $[n]$ is called \Index{shellable} if  there exists a shelling order on its facet set $\mathcal{F}(\Delta)$, say $F_1, \ldots, F_m$, such that the subcomplex $\braket{F_1,\dots,F_{k-1}}\cap \braket{F_k}$ is pure of dimension $d-1$ for each $2\leq k \leq m$.
As pointed out in \cite{MR1453579},  ``Shellability  is a simple but powerful tool for proving the Cohen-Macaulay property, and almost all Cohen-Macaulay complexes arising `in nature' turn out to be shellable. Moreover, a number of invariants associated with Cohen-Macaulay complexes can be described more explicitly or computed more easily in the shellable case.'' Shellability was later generalized to nonpure complexes by Bj\"orner and Wachs \cite{MR1333388,  MR1401765}. Matroid complexes, shifted complexes and vertex decomposable complexes are all known to be important shellable complexes.

In the present paper,
we will impose a stronger requirement on the linear order of shellability, hence end up with
a new kind of complex, which will be called as \Index{strongly shellable}.
Its basic properties will be discussed in Section 2.
Using restriction maps,  Bj\"orner and Wachs \cite{MR1333388} showed that any nonpure shellable complex has a dimension-decreasing shelling order.
Using a different technique, we will establish a similar result for strongly shellable complexes; see Theorem \ref{thm:Rearrangement} and its corollary.

In Section 3, we introduce the codimension one graph of complexes.  We will investigate the strong shellability from this point of view. In Section 4, we focus on pure strongly shellable complexes and provide several equivalent characterizations, in particular, using its codimension one graph.
It is well-known that if $\Delta$ is a shellable complex, then the Stanley-Reisner ideal of its dual complex $\Delta^{\vee}$ has linear quotients. If in addition $\Delta$ is strongly shellable, we will show that
the facet ideal of $\Delta$ also has linear quotients. This property will be the main topic when we study the chordal (hyper)-graphs in the sequel paper \cite{ESSC}.

As applications, we will consider strongly shellable posets in Section 5.
In section 6, we will discuss the relationship between strongly shellable complexes and other shellable complexes in the pure case.  In the final section, we notice that strongly shellable complexes can be characterized by strong h-assignments, an easy generalization of a result of Moriyama \cite{ISI:000292480700004}.  This provides a relatively fast algorithm for checking strong shellability.

\section{Strongly shellable complexes, general case}

Shellability of general complexes were introduced by Bj\"orner and Wachs in \cite{MR1333388,MR1401765}.

\begin{definition}
    A complex $\Delta$ is \Index{shellable} if its facets can be ordered $F_1,F_2, \dots ,F_t$ such that the subcomplex $\braket{F_1,\dots,F_{k-1}}\cap \braket{F_k}$ is pure of dimension $\dim (F_k) - 1$ for all $k = 2, \dots , t$.
    Such an ordering of facets is called a \Index{shelling order} of $\Delta$.
\end{definition}

Sometimes, one may refer to shellable complexes as \Index{semipure shellable} or \Index{nonpure shellable} to emphasize that the complexes are not necessarily pure.  Related to this concept, we will impose additional requirement on the above linear order as follows.

\begin{definition}
    \label{nonpure strongly shellable}
    A complex $\Delta$ is called \Index{strongly shellable} if its facets can be ordered $F_1, F_2, \cdots, F_t$ such that for every $i$ and $j$ with $1\le i<j\le t$, there exists a $k$ with $1\le k<j$, such that:
    \begin{enumerate}[1]
        \item \label{nss-1} $|F_j \setminus F_k|=1$,
        \item \label{nss-2} $F_j \setminus F_k \seq F_j \setminus F_i$, and
        \item \label{nss-3} $F_k \setminus F_j \seq F_i$.
    \end{enumerate}
    Such an ordering of facets will be called a \Index{strong shelling order}  of $\Delta$, or more specifically, a \Index{strong shelling order} on $\calF(\Delta)$.
\end{definition}

Note that a complex $\Delta$ is shellable if and only if it satisfies the conditions \ref{nss-1} and \ref{nss-2} of Definition \ref{nonpure strongly shellable} by \cite[Lemma 2.3]{MR1333388}. Hence, for a simplicial complex, strong shellability is indeed stronger than shellability.

\begin{rem}
    \label{1/2/3 equivalent discription}
    It is easy to see that the conditions \ref{nss-1}, \ref{nss-2} and \ref{nss-3} of Definition \ref{nonpure strongly shellable} are equivalent to
    \begin{enumerate}
        \item[$(1)'$] $\dim(F_j) = \dim(F_j \cap F_k)+1$,
        \item[$(2)'$] $F_i \cap F_j \seq F_k$, and
        \item[$(3)'$] $F_k \seq F_i \cup F_j$
    \end{enumerate}
    respectively. In particular, we will always have $\dim(F_j)\le \dim(F_k)$. It will be helpful to think of the $F_i,F_j$ and $F_k$ in
    Definition \ref{nonpure strongly shellable} as depicted in Figure \ref{Fijk}, namely, we may assume that $F_i=P_1\sqcup P_2\sqcup P_3$, $F_k=P_2\sqcup P_3 \sqcup P_4$, $F_j=P_3\sqcup P_4 \sqcup P_5$ with $|P_5|=1$.
    Here, by $\sqcup$, we mean disjoint union. With this figure in mind,
     we will be able to see, for instance, that $|F_j\setminus F_i|=|F_k\setminus F_i|+1$.
    \begin{figure}[!ht]
        \begin{minipage}[h]{\linewidth} \centering
            \includegraphics{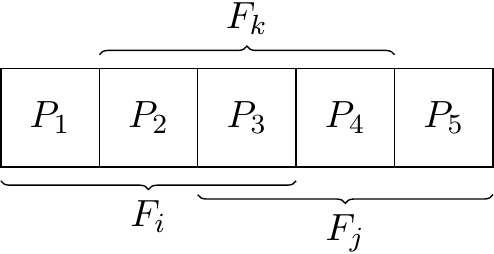}
        \end{minipage}
        \caption{Relations among $F_i$, $F_j$ and $F_k$} \label{Fijk}
    \end{figure}
\end{rem}

Taking advantage of the restriction map, Bj\"orner and Wachs \cite[Rearrangement lemma 2.6]{MR1333388} showed  that any  shelling order of a nonpure shellable complex can be rearranged to be a shelling order with decreasing dimensions. In the following, we will establish a similar result with respect to strong shellability.

\begin{lemma}
    \label{dim < dim}
    Let $\Delta$ be a strongly shellable complex. Then for each pair of facets $F_1$ and $F_2$ with $\dim(F_1) < \dim(F_2)$, there exists a facet $G$, such that $\dim(F_1)=\dim(F_1\cap G)+1$ and $F_1\cap F_2 \subseteq G \subseteq F_1\cup F_2$.
\end{lemma}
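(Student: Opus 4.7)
I would fix a strong shelling order $F_1',F_2',\dots,F_t'$ on $\calF(\Delta)$ and proceed by induction on $|F_1\setminus F_2|$. The base case $|F_1\setminus F_2|=1$ is trivial: the facet $G=F_2$ itself satisfies $F_1\cap F_2\subseteq G\subseteq F_1\cup F_2$ and $|F_1\cap G|=|F_1|-1$. For the inductive step, the key idea is to split into two cases according to the relative position of $F_1$ and $F_2$ in the strong shelling order.

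Suppose first that $F_2$ precedes $F_1$ in the order. Then I apply Definition \ref{nonpure strongly shellable} with $F_i=F_2$ and $F_j=F_1$ to obtain a facet $F_k$ with $|F_1\setminus F_k|=1$, $F_1\setminus F_k\subseteq F_1\setminus F_2$, and $F_k\setminus F_1\subseteq F_2$. The last containment immediately gives $F_k\subseteq F_1\cup F_2$; the middle one shows that any element of $F_1\cap F_2$ cannot lie in $F_1\setminus F_k$, hence $F_1\cap F_2\subseteq F_k$. So $G=F_k$ is precisely the desired facet.

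Now suppose $F_1$ precedes $F_2$ in the order. Applying Definition \ref{nonpure strongly shellable} with $F_i=F_1$ and $F_j=F_2$ produces a facet $F_k$ with $|F_2\setminus F_k|=1$, $F_2\setminus F_k\subseteq F_2\setminus F_1$, and $F_k\setminus F_2\subseteq F_1$; symmetric arguments show $F_1\cap F_2\subseteq F_k\subseteq F_1\cup F_2$. Remark \ref{1/2/3 equivalent discription} guarantees $\dim F_1<\dim F_2\le\dim F_k$, so $(F_1,F_k)$ is a valid new pair. The crucial quantitative step is to decompose $F_1\setminus F_2$ as the disjoint union $(F_k\setminus F_2)\sqcup(F_1\setminus F_k)$, using $F_k\setminus F_2\subseteq F_1$ and $F_1\cap F_2\subseteq F_k$; since $|F_k|\ge|F_2|$ forces $|F_k\setminus F_2|\ge 1$, this yields $|F_1\setminus F_k|<|F_1\setminus F_2|$. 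The inductive hypothesis, applied to $F_1$ and $F_k$, then supplies a facet $G$ with $F_1\cap F_k\subseteq G\subseteq F_1\cup F_k$ and $\dim F_1=\dim(F_1\cap G)+1$; chaining containments with $F_1\cap F_2\subseteq F_1\cap F_k$ and $F_1\cup F_k\subseteq F_1\cup F_2$ gives the required $G$.

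The main obstacle is handling the second case cleanly: Definition \ref{nonpure strongly shellable} only guarantees $|F_j\setminus F_k|=1$ on the later facet, whereas the lemma demands $|F_1\setminus G|=1$ on the lower-dimensional facet $F_1$. Resolving this requires noticing that the auxiliary $F_k$ still sits between $F_1\cap F_2$ and $F_1\cup F_2$ while strictly reducing the invariant $|F_1\setminus F_2|$, allowing induction to take over. Everything else reduces to bookkeeping with the three containments in Definition \ref{nonpure strongly shellable}.
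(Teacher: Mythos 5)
Your proof is correct. Both you and the paper pivot on the same combinatorial observation — use the strong shelling axiom to produce an intermediate facet $F_k$ with $F_1\cap F_2\subseteq F_k\subseteq F_1\cup F_2$ and $\dim F_k\ge\dim F_2>\dim F_1$, then recurse on the pair $(F_1,F_k)$ — but you choose a different induction variable. The paper inducts on the length $t$ of the strong shelling order: it reduces to the initial segment $\braket{G_1,\dots,G_{i_2-1}}$ (using implicitly that initial segments are again strong shelling orders) and therefore must carry along the strengthened claim that the desired $G$ can be found at a position strictly before $\max(i_1,i_2)$. You instead induct on the cardinality $|F_1\setminus F_2|$, which decreases strictly thanks to the disjoint decomposition $F_1\setminus F_2=(F_k\setminus F_2)\sqcup(F_1\setminus F_k)$ together with $|F_k\setminus F_2|\ge 1$; this avoids any bookkeeping about positions in the shelling order and makes the base case and the ``$F_2$ precedes $F_1$'' case one-line direct constructions. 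Your version is a bit cleaner and more self-contained, at the cost of a slightly less obvious (but correctly verified) decrease of the induction measure; the paper's version follows the standard ``peel off the last facet'' pattern that recurs throughout the article.
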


\begin{proof}
    Let $G_1, G_2, \cdots, G_t$ be a strong shelling order on $\calF(\Delta)$. Assume that $F_1 = G_{i_1}$ and $F_2 = G_{i_2}$.
    We claim that one can find suitable $G=G_{i}$, with $i<\max (i_1,i_2)$, satisfying the requirement. We establish this claim by induction on $t$. When $t=2$, this is easy. Thus, we may assume that $t\ge 3$.

    If $i_1 > i_2$, the existence of $G$ follows directly from Remark \ref{1/2/3 equivalent discription}. Thus, we may assume that $i_1 < i_2$.
    Since $G_1, G_2, \cdots, G_t$ is a strong shelling order, by Remark \ref{1/2/3 equivalent discription}, there exists some $i_3 < i_2$ such that $G_{i_1} \cap G_{i_2} \seq G_{i_3}\subseteq G_{i_1}\cup G_{i_2}$. Therefore,
    \begin{equation}
        G_{i_1} \cap G_{i_2} \seq G_{i_1} \cap G_{i_3}\quad \text{ and } \quad G_{i_1}\cup G_{i_3}\subseteq G_{i_1}\cup G_{i_2}. \label{i3-1}
    \end{equation}
    As $i_1,i_3<i_2\le t$ and $G_1,G_2,\dots,G_{i_2-1}$ forms a strong shelling order, by induction, we can find $G=G_i$ with $i<\max(i_1,i_3)$ such that
    \begin{equation}
        \dim(G_{i_1})=\dim(G_{i_1}\cap G)+1 \quad \text{ and }\quad G_{i_1}\cap G_{i_3} \subseteq G \subseteq G_{i_1}\cup G_{i_3}. \label{i3-2}
    \end{equation}
    Combining \eqref{i3-1} and \eqref{i3-2}, we see that the claim works.
\end{proof}

Let $\succ: F_1, \dots, F_t$ be a linear order on the facet set of $\Delta$, i.e., $F_i\succ F_j$ if and only $i<j$.  One can define an \Index{induced dimension-related order} $\vdash_{\succ}$ on $\mathcal{F}(\Delta)$ as follows: $F \vdash_{\succ} G$ if and only if
\begin{enumerate}[1]
    \item $\dim(F)>\dim(G)$, or
    \item $\dim(F)=\dim(G)$ and $F \succ G$.
\end{enumerate}
For a given $F\in \calF(\Delta)$, we will write
$\Ini_\succ(F)\coloneqq\Set{G\in \calF(\Delta)| G\succcurlyeq F}$ for the \Index{initial interval} with respect to $\succ$ and $F$.

The following result follows easily from Definition \ref{nonpure strongly shellable} and Remark \ref{1/2/3 equivalent discription}.

\begin{lemma}
    \label{lex and o are sso}
    Let $\Delta$ be a strongly shellable complex with a strong shelling order $\succ$. Then for each facet $F$, the restrictions of $\succ$ on both
    $\Ini_{\succ}(F)$ and $\Ini_{\vdash_\succ}(F)$
    are also strong shelling orders of the corresponding subcomplexes.
\end{lemma}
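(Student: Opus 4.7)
The plan is to verify the strong shelling conditions for each restricted order by appealing to the strong shelling of $\Delta$ itself, and then checking that the witness facet it produces actually lies in the relevant initial interval.

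For the restriction of $\succ$ to $\Ini_\succ(F)$, the argument is essentially immediate. Given facets $F_i, F_j \in \Ini_\succ(F)$ with $F_i \succ F_j$, the strong shelling of $\Delta$ supplies a facet $F_k$ with $F_k \succ F_j$ satisfying conditions \ref{nss-1}--\ref{nss-3} of Definition \ref{nonpure strongly shellable}. Since $F_j \succcurlyeq F$, we get $F_k \succ F_j \succcurlyeq F$, so $F_k$ still lies in $\Ini_\succ(F)$, as required. (The fact that the members of $\Ini_\succ(F)$ are genuine facets of the subcomplex they generate is automatic because facets of $\Delta$ are pairwise incomparable under inclusion.)

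For the restriction of $\succ$ to $\Ini_{\vdash_\succ}(F)$, enumerate the facets of this set by $\succ$ as $G_1 \succ G_2 \succ \cdots \succ G_r$; again these remain facets of $\langle G_1, \dots, G_r \rangle$. For any $G_i \succ G_j$, invoking strong shelling of $\Delta$ yields a facet $H$ of $\Delta$ with $H \succ G_j$ and the three conditions of Definition \ref{nonpure strongly shellable}. The crucial observation, which is recorded in Remark \ref{1/2/3 equivalent discription}, is that $\dim(H) \ge \dim(G_j)$. I would then split into two cases. If $\dim(G_j) > \dim(F)$, then $\dim(H) \ge \dim(G_j) > \dim(F)$ immediately gives $H \vdash_\succ F$. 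If instead $\dim(G_j) = \dim(F)$, then the membership $G_j \in \Ini_{\vdash_\succ}(F)$ forces $G_j \succcurlyeq F$, and combining with $H \succ G_j$ gives either $\dim(H) > \dim(F)$ or $\dim(H) = \dim(F)$ and $H \succ F$; in either subcase $H \vdash_\succ F$. Thus $H$ belongs to $\Ini_{\vdash_\succ}(F)$ and precedes $G_j$ in $\succ$, providing the required witness for the strong shelling condition on the restricted complex.

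The only mild obstacle is ensuring that the witness facet does not escape the initial interval under consideration. For $\Ini_\succ(F)$ this is trivial, while for $\Ini_{\vdash_\succ}(F)$ it is exactly where the dimension inequality $\dim(H) \ge \dim(G_j)$ from Remark \ref{1/2/3 equivalent discription} becomes indispensable, since it rules out the danger that $\dim(H)$ drops below $\dim(F)$ and thereby knocks $H$ out of the interval.
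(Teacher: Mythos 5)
Your proof is correct and fills in exactly the details the paper omits, since the paper simply states that the lemma ``follows easily from Definition \ref{nonpure strongly shellable} and Remark \ref{1/2/3 equivalent discription}.'' The argument for $\Ini_\succ(F)$ is the obvious transitivity observation, and for $\Ini_{\vdash_\succ}(F)$ you correctly identify that the needed ingredient is the inequality $\dim(F_k) \ge \dim(F_j)$ recorded in Remark \ref{1/2/3 equivalent discription}; this is precisely the reasoning the authors had in mind.
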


Let $\succ:F_1,\dots,F_t$ be a linear order on $\calF(\Delta)$.  Thus, the induced dimension-related order $\vdash_\succ$ is fixed.  If we are given another linear order $\succ'$ on $\calF(\Delta)$,  we will call $(F_i, F_j)$ a \Index{relative inverse pair with respect to $\succ$}, if $F_i \vdash_{\succ} F_j$ and $F_j \succ' F_i$. Denote by $\Inv_{\succ}(\succ')$ the set of relative inverse pairs with respect to $\succ$.
Obviously, one can recover the linear order $\succ'$ from $\vdash_\succ$ by switching the pairs of facets in $\Inv_{\succ}(\succ')$.

We will simply write $\Inv_\succ(\succ)$ as $\Inv(\succ)$. An ordered pair $(F_i, F_j) \in \Inv(\succ)$ precisely when $j<i$ and $\dim (F_j)<\dim(F_i)$.

\begin{lemma}
    Let $\succ$ and $\succ'$ be two linear orders on $\calF(\Delta)$. If $\Inv_{\succ}(\succ') \seq \Inv(\succ)$, then the induced dimension-related orders $\vdash_\succ$ and $\vdash_{\succ'}$ coincide.
    \label{vdashes}
\end{lemma}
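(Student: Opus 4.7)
The plan is to reduce the claim to comparing $\vdash_\succ$ and $\vdash_{\succ'}$ on pairs of equal-dimensional facets, and then to derive a contradiction from the hypothesis.

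First I would observe that both induced dimension-related orders agree on any pair of facets of different dimensions: if $\dim(F)>\dim(G)$, then $F \vdash_\succ G$ and $F \vdash_{\succ'} G$ directly from the definition, independently of how $\succ$ and $\succ'$ compare $F$ and $G$. Hence it suffices to show that $\vdash_\succ$ and $\vdash_{\succ'}$ agree on every pair $\{F,G\}$ with $\dim(F)=\dim(G)$. For such a pair, $F\vdash_\succ G$ iff $F\succ G$, and likewise $F\vdash_{\succ'} G$ iff $F\succ' G$.

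So suppose, for a contradiction, that there exist two facets $F,G$ with $\dim(F)=\dim(G)$ on which $\vdash_\succ$ and $\vdash_{\succ'}$ disagree. Without loss of generality, $F\succ G$ while $G\succ' F$. Then by the equal-dimension case, $F\vdash_\succ G$. Combining this with $G\succ' F$, the ordered pair $(F,G)$ lies in $\Inv_\succ(\succ')$ by definition. By the hypothesis $\Inv_\succ(\succ')\seq \Inv(\succ)$, we conclude $(F,G)\in\Inv(\succ)$, which (as recorded in the excerpt) forces $\dim(G)<\dim(F)$, contradicting $\dim(F)=\dim(G)$.

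This contradiction shows the two induced dimension-related orders agree on every pair of facets, hence $\vdash_\succ = \vdash_{\succ'}$. The argument is essentially a one-step unpacking of the definitions; the only place one has to be careful is tracking which of $\succ, \succ', \vdash_\succ, \vdash_{\succ'}$ is being used at each step and remembering that $\Inv(\succ)$ only contains pairs of strictly different dimension, so no equal-dimension inverse pair can survive the assumed inclusion.
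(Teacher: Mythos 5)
Your argument is correct and is essentially the contrapositive form of the paper's own proof: the paper shows directly that $(F_i,F_j)\notin\Inv(\succ)$ implies $(F_i,F_j)\notin\Inv_\succ(\succ')$ and hence $F_i\succ' F_j$, while you assume a disagreement, land in $\Inv_\succ(\succ')\subseteq\Inv(\succ)$, and contradict the equal-dimension assumption. Same decomposition by dimension, same key use of the hypothesis, just phrased as a proof by contradiction rather than a direct implication.
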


\begin{proof}
    Let $F_i$ and $F_j$ be two distinct facets. When $\dim(F_i)>\dim(F_j)$, we have both $F_i\vdash_{\succ} F_j$ and $F_i\vdash_{\succ'}F_j$. Therefore, we may assume that $\dim(F_i)=\dim(F_j)$ and $F_i \succ F_j$. Whence, $F_i\vdash_\succ F_j$. Note that $(F_i,F_j)\notin \Inv(\succ)$. It follows from the condition $\Inv_{\succ}(\succ') \seq \Inv(\succ)$ that $(F_i,F_j)\notin \Inv_\succ(\succ')$. But $F_i\vdash_\succ F_j$. This simply means that $F_i \succ'F_j$ and in turn $F_i\vdash_{\succ'}F_j$.
\end{proof}

\begin{remark}
    \label{simplification}
    If $(F_i,F_j)\in \Inv(\succ)$, then $j<i$ with $\dim(F_j)<\dim(F_i)$. In this case, we will be able to find a $k$ with $j\le k<i$ such that $\dim(F_k)<\dim(F_{k+1})$. Now $(F_{k+1},F_{k})\in \Inv(\succ)$. We can modify $\succ$ by only switching the relation between $F_k$ and $F_{k+1}$ and end up with $\succ'$. Then, $\succ'$ is indeed a linear order with $\Inv_\succ(\succ')\subsetneq \Inv(\succ)$ and $\Inv(\succ)\setminus \Inv_\succ(\succ')=\{(F_{k+1},F_{k})\}$.
\end{remark}

\begin{theorem}
    [Rearrangement]
    \label{thm:Rearrangement}
    Let $\Delta$ be a nonpure strongly shellable complex with a strong shelling order $\succ$. Then any linear order $\succ'$ on $\mathcal{F}(\Delta)$ is also a strong shelling order, provided that $\Inv_{\succ}(\succ') \seq \Inv(\succ)$.
\end{theorem}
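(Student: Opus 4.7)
The plan is to induct on the nonnegative integer $n\coloneqq|\Inv(\succ)\setminus\Inv_\succ(\succ')|$. When $n=0$, the hypothesis $\Inv_\succ(\succ')\subseteq\Inv(\succ)$ forces $\Inv_\succ(\succ')=\Inv(\succ)=\Inv_\succ(\succ)$; since two linear orders whose inversion sets relative to the fixed $\vdash_\succ$ coincide must agree pair by pair, $\succ'=\succ$ and there is nothing to prove.

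For $n\ge 1$ I would pick any $(F_i,F_j)\in\Inv(\succ)\setminus\Inv_\succ(\succ')$, so that $F_j\succ F_i$, $F_i\succ' F_j$, and $\dim F_j<\dim F_i$. Walking along $F_j,F_{j+1},\ldots,F_i$ in $\succ$ and tracking their $\succ'$-positions (where $F_i$ precedes $F_j$), one locates an adjacent pair $F_k,F_{k+1}$ with $F_{k+1}\succ' F_k$. The hypothesis $\Inv_\succ(\succ')\subseteq\Inv(\succ)$ then rules out $\dim F_k=\dim F_{k+1}$ (same-dimension pairs are ordered identically in $\succ$ and $\succ'$) and also $\dim F_k>\dim F_{k+1}$ (which would place $(F_k,F_{k+1})$ in $\Inv_\succ(\succ')\setminus\Inv(\succ)$), so $\dim F_k<\dim F_{k+1}$. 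Let $\succ_1$ be the order obtained from $\succ$ by swapping $F_k$ and $F_{k+1}$. By Lemma \ref{vdashes} one has $\vdash_{\succ_1}=\vdash_\succ$, and then $\Inv(\succ_1)=\Inv(\succ)\setminus\{(F_{k+1},F_k)\}$ and $\Inv_{\succ_1}(\succ')=\Inv_\succ(\succ')$, exactly as in Remark \ref{simplification}. Consequently $\Inv_{\succ_1}(\succ')\subseteq\Inv(\succ_1)$ with $|\Inv(\succ_1)\setminus\Inv_{\succ_1}(\succ')|=n-1$, so the induction closes once I show that $\succ_1$ is itself a strong shelling order.

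To verify that $\succ_1$ is a strong shelling I would check the conditions $(1)'$--$(3)'$ of Remark \ref{1/2/3 equivalent discription} pair by pair. The orders $\succ$ and $\succ_1$ agree away from the single swap, so for a pair $A\succ_1 B$ distinct from $(F_{k+1},F_k)$ one already has $A\succ B$, and the witness $C\succ B$ furnished by the strong shellability of $\succ$ satisfies $C\succ_1 B$ unless $(C,B)=(F_k,F_{k+1})$, which in turn forces $A=F_m$ with $m<k$, $B=F_{k+1}$, and $C=F_k$. The swapped pair $(F_{k+1},F_k)$ itself is handled by Lemma \ref{dim < dim} applied to $F_k$ (smaller dim) and $F_{k+1}$: it produces a facet $G$ of $\succ$-index at most $k$, and $G\ne F_k$ because $(1)'$ would otherwise yield $\dim F_k=\dim F_k+1$; hence $G\succ_1 F_k$.

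The main obstacle of the whole argument is the delicate subcase $(A,B,C)=(F_m,F_{k+1},F_k)$. To handle it I would invoke the strong shelling definition of $\succ$ at the adjacent pair $(F_k,F_{k+1})$, obtaining some $F_{k'}$ with $k'<k+1$ satisfying $(1)'$--$(3)'$ for the triple $(F_k,F_{k+1},F_{k'})$. The case $k'=k$ is excluded: condition $(1)'$ would read $|F_{k+1}\cap F_k|=|F_{k+1}|-1$, and coupled with $|F_k|<|F_{k+1}|$ this forces $F_k\subsetneq F_{k+1}$, contradicting the maximality of the facet $F_k$. Hence $k'<k$, so $F_{k'}\succ_1 F_{k+1}$. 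Finally, the chain of inclusions
\[
F_m\cap F_{k+1}\subseteq F_k\cap F_{k+1}\subseteq F_{k'}
\quad\text{and}\quad
F_{k'}\subseteq F_k\cup F_{k+1}\subseteq F_m\cup F_{k+1},
\]
obtained from $(2)'$ and $(3)'$ for both triples $(F_m,F_{k+1},F_k)$ and $(F_k,F_{k+1},F_{k'})$, shows that $F_{k'}$ also satisfies $(1)'$--$(3)'$ for the triple $(F_m,F_{k+1},F_{k'})$, completing the induction.
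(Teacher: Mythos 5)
Your proof is correct and follows the same overall strategy as the paper's: reduce to a single adjacent transposition via Lemma~\ref{vdashes} and Remark~\ref{simplification}, then verify that the transposed order is still a strong shelling, with Lemma~\ref{dim < dim} supplying the witness for the swapped pair. You make the induction explicit and carefully locate the right adjacent pair inside the given inversion $(F_i,F_j)$, which is a useful clarification of a step the paper leaves implicit.

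The one place where you diverge substantially is the ``delicate subcase'' $(A,B,C)=(F_m,F_{k+1},F_k)$. Your construction there is correct, but the case is in fact vacuous: by Remark~\ref{1/2/3 equivalent discription}, any witness $F_k$ for a target $F_j$ must satisfy $\dim(F_j)\le\dim(F_k)$, so a facet of dimension $\dim F_k<\dim F_{k+1}$ can never be a witness for $F_{k+1}$. This is exactly the observation the paper leans on (silently) when it says only the pair $(F_{i+1},F_i)$ needs checking: for every other pair whose target is $F_{i+1}$, the old witness is automatically of dimension $\ge\dim F_{i+1}$ and hence is not $F_i$, so it survives the swap. Your elaborate replacement of the witness via a second application of the strong shelling condition to $(F_k,F_{k+1})$ and the chain of inclusions is therefore unnecessary, though not wrong; noticing the dimension constraint would have shortened the argument and brought it in line with the paper's.
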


\begin{proof}
    Assume that $\succ:F_1,F_2,\dots,F_t$ gives the strong shelling order on $\calF(\Delta)$. Let $\succ'$ be another linear order with $\Inv_{\succ}(\succ') \seq \Inv(\succ)$. We may assume that this containment is strict, for otherwise, $\succ$ coincides with $\succ'$.
    By Lemma \ref{vdashes} and Remark \ref{simplification}, it suffices to consider the case when
    \[
    \Inv(\succ) \setminus \Inv_{\succ}(\succ') = \{(F_{i+1}, F_{i})\}.
    \]
    Whence, for arbitrary distinct $k_1,k_2\in[t]$, if $\Set{k_1,k_2}\ne \Set{i,i+1}$, then $F_{k_1}\succ F_{k_2}$ if and only if $F_{k_1}\succ' F_{k_2}$. On the other hand, $\dim(F_i)<\dim(F_{i+1})$ and $F_{i+1}\succ' F_i$.

    Note that the restriction
    \[
    \succ_{\Ini_{\succ}(F_{i+1})}:
    \, F_1, \cdots, F_{i-1}, F_i, F_{i+1}
    \]
    is a strong shelling order of $\Delta'\coloneqq\braket{F_1,\dots,F_{i+1}}$ by Lemma \ref{lex and o are sso}. We claim that the restriction
    \[
    (\succ')_{\Ini_{\succ'}(F_i)}:
    \, F_1, \cdots, F_{i-1}, F_{i+1}, F_i
    \]
    is still a strong shelling order of $\Delta'$. To this end, we only need to check the facets $F_{i+1}$ and $F_i$. Since $\dim(F_i)< \dim(F_{i+1})$, the claim follows from Lemma \ref{dim < dim}. Consequently, $\succ'$ is a strong shelling order of $\Delta$.
\end{proof}

\begin{corollary}
    \label{d dimension}
    Let $\Delta$ be a nonpure strongly shellable complex. Then, there exists a dimension-decreasing strong shelling order on $\mathcal{F}(\Delta)$.
\end{corollary}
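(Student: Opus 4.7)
The plan is to derive this corollary as an immediate consequence of the Rearrangement Theorem \ref{thm:Rearrangement} by producing a suitable candidate order $\succ'$ and checking the hypothesis $\Inv_{\succ}(\succ') \subseteq \Inv(\succ)$.

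First, let $\succ$ be any given strong shelling order of $\Delta$, which exists by hypothesis. I would take as my candidate the induced dimension-related order itself, namely set $\succ' \coloneqq \vdash_{\succ}$. By definition of $\vdash_{\succ}$ this is a linear order on $\calF(\Delta)$, and whenever $F \succ' G$ one has either $\dim(F) > \dim(G)$ or $\dim(F) = \dim(G)$; thus $\succ'$ automatically enumerates the facets in order of non-increasing dimension, which is precisely what ``dimension-decreasing'' means.

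Next I would verify the inclusion of inverse-pair sets required by Theorem \ref{thm:Rearrangement}. A pair $(F_i, F_j)$ lies in $\Inv_{\succ}(\succ')$ exactly when $F_i \vdash_{\succ} F_j$ and $F_j \succ' F_i$; but $\succ' = \vdash_{\succ}$, so the second condition reads $F_j \vdash_{\succ} F_i$, contradicting antisymmetry of the linear order $\vdash_{\succ}$. Hence $\Inv_{\succ}(\succ') = \emptyset$, which is trivially contained in $\Inv(\succ)$.

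With both conditions in hand, Theorem \ref{thm:Rearrangement} immediately yields that $\succ'$ is a strong shelling order of $\Delta$, and by construction it is dimension-decreasing. There is no real obstacle here: the heavy lifting has already been done in the Rearrangement Theorem, and the whole point is that picking $\succ' = \vdash_{\succ}$ is the extreme case in which all dimension-inversions of $\succ$ are removed at once.
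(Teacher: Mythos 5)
Your proof is correct and takes exactly the same route as the paper: choose $\succ' = \vdash_\succ$, observe that $\Inv_\succ(\succ') = \varnothing \subseteq \Inv(\succ)$, and invoke Theorem~\ref{thm:Rearrangement}. The only difference is that you spell out the antisymmetry argument for why $\Inv_\succ(\succ')$ is empty, which the paper leaves implicit.
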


\begin{proof}
    Let $\succ$ be an arbitrary strong shelling order on $\calF(\Delta)$  and choose $\vdash_\succ$ as $\succ'$. Then $\Inv_\succ(\succ')=\varnothing$. Thus, by Theorem \ref{thm:Rearrangement}, $\vdash_\succ$ is also a strong shelling order. On the other hand, $\vdash_\succ$ trivially satisfies the dimensional requirement.
\end{proof}

Let $\Delta$ be a complex and let $0\le i \le \dim(\Delta)$.
Recall that the \Index{$i$-th skeleton} of $\Delta$, denoted by $\Delta^{(i)}$, is the subcomplex of $\Delta$ generated by all faces of $\Delta$ of dimension at most $i$.
On the other hand, the \Index{pure $i$-th skeleton} of $\Delta$, denoted by $\Delta^{[i]}$, is the pure subcomplex of $\Delta$ generated by all $i$-dimensional faces.
The following result is clear.

\begin{proposition}
    \label{pure d}
    Let $\Delta$ be a strongly shellable  complex of dimension $d$. Then $\Delta^{[d]}$ is also strongly shellable.
\end{proposition}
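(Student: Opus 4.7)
The plan is to exploit Corollary \ref{d dimension} to produce a dimension-decreasing strong shelling order on $\calF(\Delta)$, and then check that the initial segment consisting of the top-dimensional facets is a strong shelling order of $\Delta^{[d]}$.

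First I would observe that $\calF(\Delta^{[d]})$ is exactly the set of $d$-dimensional facets of $\Delta$. Indeed, since $d=\dim(\Delta)$, every $d$-dimensional face of $\Delta$ is automatically a facet of $\Delta$ (there is no room for it to be contained in a strictly larger face), and conversely every such facet is by definition a generator of $\Delta^{[d]}$, and being of top dimension in $\Delta^{[d]}$, a facet there.

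Next, by Corollary \ref{d dimension}, I may pick a strong shelling order $\succ:F_1,F_2,\dots,F_t$ of $\Delta$ whose dimensions are nonincreasing. Let $s$ be the largest index with $\dim(F_s)=d$; then by the previous paragraph $F_1,\dots,F_s$ enumerates $\calF(\Delta^{[d]})$, and what remains is to verify that this enumeration, as a restriction of $\succ$, is a strong shelling order of $\Delta^{[d]}$.

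To check this, fix $1\le i<j\le s$. Applying the strong shelling property of $\succ$ to the pair $(F_i,F_j)$ inside $\Delta$ yields some $k$ with $1\le k<j$ satisfying conditions \ref{nss-1}--\ref{nss-3} of Definition \ref{nonpure strongly shellable}. By Remark \ref{1/2/3 equivalent discription} one has $\dim(F_j)\le \dim(F_k)$, so $d=\dim(F_j)\le \dim(F_k)\le \dim(\Delta)=d$, forcing $\dim(F_k)=d$. Since $\succ$ is dimension-decreasing, this means $k\le s$, so the witness $F_k$ actually lies in the restricted order, and the same conditions \ref{nss-1}--\ref{nss-3} continue to hold. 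This shows that $F_1,\dots,F_s$ is a strong shelling order of $\Delta^{[d]}$. There is no real obstacle here; the only substantive point is the compatibility between the inequality $\dim(F_j)\le\dim(F_k)$ from Remark \ref{1/2/3 equivalent discription} and the dimension-decreasing feature of $\succ$, which together trap the witness within the top-dimensional segment.
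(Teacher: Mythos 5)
Your proof is correct. The paper itself declares this result ``clear'' and omits the argument, so there is no paper proof to compare against, but your argument is exactly the natural one: take a dimension-decreasing strong shelling order via Corollary~\ref{d dimension}, note that the top-dimensional facets form an initial segment, and observe that Remark~\ref{1/2/3 equivalent discription} (which gives $\dim(F_j)\le\dim(F_k)$) forces the witness $F_k$ to stay in that initial segment. The only implicit step worth spelling out is why $\dim(F_j)\le\dim(F_k)$: if $F_k\setminus F_j=\varnothing$ then $F_k\subseteq F_j$, impossible for distinct facets, so $|F_k\setminus F_j|\ge 1$ while $|F_j\setminus F_k|=1$, giving $|F_j|\le |F_k|$. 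That is precisely what the remark asserts, so your citation of it is legitimate.
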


It is shown in \cite[Theorem 8.2.18]{MR2724673} that for any shellable complex, its skeletons and pure skeletons are still shellable. This property is not preserved for strong shellability of complexes, as the following example shows.

\begin{example}
    Let $\Delta$ be a complex with the facet set
    \[
    \Set{\{1, 2, 3\},  \{2, 3, 4\},  \{3, 4, 5\},\{4, 5, 6\}}.
    \]
    It is direct to check that $\Delta$ is strongly shellable, but
    \[
    \Delta^{(1)}=\Delta^{[1]}=\braket{ \{1, 2\}, \{2, 3\},\{1, 3\}, \{2, 4\},\{3, 4\}, \{3, 5\}, \{4, 5\}, \{4, 6\}, \{5, 6\} }
    \]
    is not strongly shellable. Actually, using the terminology introduced in the next section, we have
    \[
    \dis_{\Delta}(\{1, 2\}, \{5, 6\})=2,
    \]
    and
    \[
    \dis_{\Gamma(\Delta)}(\{1, 2\}, \{5, 6\})=3.
    \]
    Thus, by Theorem \ref{distance of graph less than complex}, the skeleton $\Delta^{[1]}$ is not strongly shellable.
\end{example}

For a complex $\Delta$,
we denote by $\overline{\pure_k}(\Delta)$ the pure complex generated by the $k$-dimensional facets of $\Delta$.
Generally speaking, $\overline{\pure_k}(\Delta)$ may not necessarily be strongly shellable even though $\Delta$ is. For example, if $\Delta=\braket{ \{1245\}, \{123\}, \{456\} }$, it is easy to check that $\Delta$ is strongly shellable, while   $\overline{\pure_2}(\Delta)=\braket{\{123\}, \{456\}}$ is not. However, in the special case when $k=1$, the following result holds:

\begin{proposition}
    \label{pure_1 ss}
    Let $\Delta$ be a strongly shellable complex of positive dimension. Then $\overline{\pure_1}(\Delta)$ is also strongly shellable.
\end{proposition}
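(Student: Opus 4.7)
The plan is to invoke Corollary \ref{d dimension} to choose a dimension-decreasing strong shelling order $F_1,\ldots,F_t$ of $\Delta$. Since the $1$-dimensional facets appear as a suffix $F_{s+1},\ldots,F_t$, and since $\calF(\overline{\pure_1}(\Delta))=\{F_{s+1},\ldots,F_t\}$ by definition, it suffices to show that the restriction of the order to this suffix is itself a strong shelling order of $\overline{\pure_1}(\Delta)$.

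Fix $s+1\le i<j\le t$. Strong shellability of $\Delta$ supplies some index $k_0<j$ such that, in the language of Remark \ref{1/2/3 equivalent discription},
\begin{equation*}
F_i\cap F_j\subseteq F_{k_0}\subseteq F_i\cup F_j
\quad\text{and}\quad
|F_{k_0}\cap F_j|=|F_j|-1=1.
\end{equation*}
These conditions are purely set-theoretic, so the same $k_0$ will also witness the strong shelling requirement inside $\overline{\pure_1}(\Delta)$, provided that $F_{k_0}$ itself is $1$-dimensional (which would force $k_0\ge s+1$).

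The only real content is the dimensional pinning $|F_{k_0}|=2$. From $F_{k_0}\setminus F_j\subseteq (F_i\cup F_j)\setminus F_j\subseteq F_i\setminus F_j$ together with $|F_{k_0}\cap F_j|=1$ one gets $|F_{k_0}|\le 1+|F_i\setminus F_j|\le 3$. The value $|F_{k_0}|=1$ is impossible, for then $F_{k_0}\subsetneq F_j$ would violate the maximality of $F_{k_0}$. The value $|F_{k_0}|=3$ is also impossible: it forces $F_{k_0}\setminus F_j=F_i\setminus F_j$, and combined with $F_i\cap F_j\subseteq F_{k_0}$ this yields $F_i\subsetneq F_{k_0}$, violating the maximality of $F_i$. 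Hence $|F_{k_0}|=2$, as required. The only potential obstacle is this size pinning, but both extremal cases collapse immediately once one remembers that $F_i$ and $F_{k_0}$ are facets.
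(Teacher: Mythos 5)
Your proof is correct and follows essentially the same route as the paper: pass to a dimension-decreasing strong shelling order (Corollary \ref{d dimension}), observe that the $1$-dimensional facets form a suffix, and then show that the witness $F_{k_0}$ supplied by strong shellability for a pair of $1$-dimensional facets must itself be $1$-dimensional. Your size-pinning argument ($|F_{k_0}|\in\{1,2,3\}$, with $1$ and $3$ excluded by facet maximality) is just a slightly more explicit version of the paper's observation that $\dim(F_k)\ge 2$ would force $F_i\subsetneq F_k$ or $F_j\subsetneq F_k$; note that the case $|F_{k_0}|=1$ is also excluded automatically because the order is dimension-decreasing and $k_0<j$, so your argument there is harmless but redundant.
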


\begin{proof}
    Assume that $F_1, \cdots, F_t$ is a strong shelling order on $\calF(\Delta)$, with $\dim(F_i) \geq \dim(F_j)$ whenever $i<j$.  We may assume that $\overline{\pure_1}(\Delta) = \braket{ F_{s+1}, \cdots, F_t }$. For each pair $F_i$ and $F_j$ with $s+1 \leq i<j \leq t$, there exists a $k$ with $1\le k<t$, such that $F_k \subseteq F_i\cup F_j$. Notice that $\dim(F_k)\ge \dim(F_j)=\dim(F_i)=1$.  If $\dim(F_k)\ge 2$, this will force $F_i\subsetneq F_k$ or $F_j\subsetneq F_k$. Thus, we have indeed $\dim(F_k)=1$, and $F_k \in \overline{\pure_1}(\Delta)$. This shows that $F_{s+1}, \cdots, F_t$ is a strong shelling order of $\overline{\pure_1}(\Delta)$.
\end{proof}

The above result hints that one-dimensional pure strongly shellable complexes are very special. In our next paper \cite{ESSC}, we will show that they are indeed the complement graphs of chordal graphs.

Recall that for a simplicial complex $\Delta$, the \Index{link} of a face $A \in \Delta$ is defined as
\[
\link_{\Delta}(A)\coloneqq \Set{B \in \Delta \mid B \cup A \in \Delta, \, B \cap A =\varnothing}.
\]
When $A=\Set{x}$, we will simply write it as $\link_\Delta(x)$.

On the other hand, for a subset $W\subseteq \calV(\Delta)$, the \Index{restriction} of $\Delta$ on $W$ is the subcomplex
\[
\Delta_W\coloneqq \Set{F\in \Delta: F\subseteq W}.
\]
The restriction is sometimes denoted by $\Delta[W]$ as well. For a vertex $x\in \calV(\Delta)$, we will usually write $\Delta_{\calV(\Delta)\setminus x}$ as $\Delta\setminus x$.

\begin{proposition}
    \label{link ss}
    Let $\Delta$ be a strongly shellable complex. Then $\link_{\Delta}(A)$ is also strongly shellable for any $A \in \Delta$.
\end{proposition}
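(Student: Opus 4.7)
The plan is to induce a strong shelling order on $\link_\Delta(A)$ from a strong shelling order of $\Delta$. First, recall that the facets of $\link_\Delta(A)$ are exactly the sets $F \setminus A$ where $F$ ranges over the facets of $\Delta$ that contain $A$; this is a standard verification using maximality (different such $F$'s yield different and mutually incomparable $F \setminus A$'s because $A \subseteq F$ allows us to recover $F = (F\setminus A) \cup A$).

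So I would fix a strong shelling order $F_1, F_2, \ldots, F_t$ of $\Delta$, let $F_{i_1}, F_{i_2}, \ldots, F_{i_s}$ (with $i_1 < i_2 < \cdots < i_s$) be the sub-sequence consisting of those facets that contain $A$, and set $G_p \coloneqq F_{i_p} \setminus A$. The claim is that $G_1, G_2, \ldots, G_s$ is a strong shelling order of $\link_\Delta(A)$. To verify this, pick any $1 \le p < q \le s$. Strong shellability of $\Delta$, applied to the pair $F_{i_p}, F_{i_q}$, produces an index $k < i_q$ and a facet $F_k$ such that $|F_{i_q}\setminus F_k|=1$, $F_{i_q}\setminus F_k \subseteq F_{i_q}\setminus F_{i_p}$, and $F_k\setminus F_{i_q}\subseteq F_{i_p}$.

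The crucial observation, which I expect to be the main (though mild) obstacle, is that this $F_k$ automatically contains $A$, and hence appears among the $F_{i_r}$. Indeed, since $A \subseteq F_{i_p}$, we have $F_{i_q}\setminus F_{i_p} \subseteq F_{i_q}\setminus A$; combined with condition (2'), the unique element of $F_{i_q}\setminus F_k$ lies outside $A$, so $A \subseteq F_k$. Writing $F_k = F_{i_r}$, necessarily $r < q$.

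Finally, using that $A \subseteq F_{i_p}\cap F_{i_q}\cap F_{i_r}$, a direct set-theoretic check gives $G_q\setminus G_p = F_{i_q}\setminus F_{i_p}$, $G_q\setminus G_r = F_{i_q}\setminus F_{i_r}$, and $G_r\setminus G_q = F_{i_r}\setminus F_{i_q}$; and moreover $F_{i_r}\setminus F_{i_q} \subseteq F_{i_r}\setminus A$, which together with (3') gives $G_r\setminus G_q \subseteq F_{i_p}\setminus A = G_p$. The three conditions of Definition \ref{nonpure strongly shellable} for $G_p, G_q, G_r$ then follow verbatim from (1'), (2'), (3').
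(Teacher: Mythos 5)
Your proof is correct and follows essentially the same route as the paper: transport a strong shelling order from $\Delta$ to $\link_\Delta(A)$ via the bijection $F\mapsto F\setminus A$, apply strong shellability of $\Delta$ to the lifted pair, observe that the resulting $F_k$ necessarily contains $A$, and translate the three conditions back. A small shortcut: $A\subseteq F_k$ follows immediately from condition $(2')$ ($F_{i_p}\cap F_{i_q}\subseteq F_k$) since $A\subseteq F_{i_p}\cap F_{i_q}$, avoiding the detour through the unique element of $F_{i_q}\setminus F_k$; also note you cite $(2')$ and $(3')$ while actually writing conditions $(2)$ and $(3)$, though the argument is unaffected.
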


\begin{proof}
    Assume that $\succ$ is a strong shelling order on $\calF(\Delta)$. It induces a linear order $\succ'$ on $\calF(\link_\Delta(A))=\Set{F\setminus A\mid A\subseteq F\in \calF(\Delta)}$:
    \[
    \text{$F'\succ' G'$\quad if and only if \quad $(F'\sqcup A)\succ (G'\sqcup A)$ in $\calF(\Delta)$.}
\]
Now, take arbitrary pair $F_i'\succ' F_j'$ in $\calF(\link_\Delta(A))$. Equivalently, $(F_i'\sqcup A) \succ (F_j' \sqcup A)$ in $\calF(\Delta)$. Hence, there exists some $F_k\in \calF(\Delta)$ such that $F_k\succ (F_j'\sqcup A)$ and
    \[
    |(F_j'\sqcup A) \setminus F_k|=1 \quad \text{ and } \quad  (F_i'\sqcup A)\cap (F_j'\sqcup A) \subseteq F_k \subseteq (F_i'\sqcup A)\cup (F_j'\sqcup A).
    \]
    Obviously, $A \seq F_k$. So, we can find $F_k'\coloneqq F_k\setminus A\in \calF(\link_{\Delta}(A))$ such that $F_k'\succ' F_k$ with
    \[
    |F_j'\setminus F_k'| =1 \quad \text{ and } \quad F_i'\cap F_j' \subseteq F_k' \subseteq F_i' \cup F_j'.
    \]
    This shows that $\succ'$ is a strong shelling order of $\link_{\Delta}(A)$.
\end{proof}

\begin{example}
    Note that even when both $\link_{\Delta}(x)$ and $\Delta \setminus x$ are pure strongly shellable, $\Delta$ may not necessarily be strongly shellable. For example, let $\Delta$ be the complex whose facet set is
    \[
    \Set{\{1, 2, 3\}, \{2, 3, 4\}, \{3, 4, 5\}, \{4, 5, 6\}, \{5, 6, 7\}}.
    \]
    One can check directly that $\link_{\Delta}(7)$ and $\Delta \setminus 7$ are pure strongly shellable. On the other hand, notice that
    \[
    \dis_{\Delta}(\{1,2,3\},\{5,6,7\})=3,
    \]
    while
    \[
    \dis_{\Gamma(\Delta)}(\{1,2,3\},\{5,6,7\})=4.
    \]
    Thus, again by Theorem \ref{distance of graph less than complex}, we see that $\Delta$ is not strongly shellable.
\end{example}

\begin{proposition}
    \label{nonpure restriction}
    Assume that $\Delta$ is a strongly shellable  complex, and let $S$ be some subset of the set of vertices of $\Delta$. Assume furthermore that the induced complex $\Delta[S]$ satisfies the following condition:

    \begin{center}
        \begin{minipage}[h]{0.8\linewidth}
            if $\sigma$ is a maximal simplex in $\Delta$, then $\sigma \cap \Delta[S] = \sigma[S]$ is a maximal simplex in $\Delta[S]$.
        \end{minipage}
    \end{center}

    \noindent In this case, the complex $\Delta[S]$ is strongly shellable as well, and any strong shelling  order on $\calF(\Delta)$ induces a strong shelling order on $\calF(\Delta[S])$.
\end{proposition}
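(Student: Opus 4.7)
The plan is to take a given strong shelling order $\succ : F_1, \ldots, F_t$ on $\calF(\Delta)$, consider the sequence of restrictions $F_1 \cap S, \ldots, F_t \cap S$, and extract from it a linear order $\succ'$ on $\calF(\Delta[S])$ by retaining only the first occurrence of each distinct subset. By the hypothesis, every $F_i \cap S$ is a facet of $\Delta[S]$, and conversely every facet $G$ of $\Delta[S]$ appears in the list: picking any facet $F$ of $\Delta$ with $G \seq F$ yields $G \seq F \cap S$, and since $F \cap S$ is a facet of $\Delta[S]$ by hypothesis, maximality of $G$ forces $G = F \cap S$. Hence $\succ'$ enumerates $\calF(\Delta[S])$ exactly once.

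To verify that $\succ'$ is a strong shelling order, I would fix two facets with $G_a \succ' G_b$, let $F_i$ and $F_j$ be their chosen first representatives (so $i<j$, $F_i \cap S = G_a$, and $F_k \cap S \neq G_b$ for every $k < j$), and apply the strong shelling property of $\succ$ to $(F_i, F_j)$ to produce some $k$ with $1 \le k < j$ satisfying the three conditions of Definition \ref{nonpure strongly shellable}. Setting $G_c \coloneqq F_k \cap S$ and intersecting each of those conditions with $S$ yields immediately
\[
|G_b \setminus G_c| \le 1, \qquad G_b \setminus G_c \seq G_b \setminus G_a, \qquad G_c \setminus G_b \seq G_a.
\]
Moreover, because $G_c$ already appears at position $k < j$ of the unreduced sequence, its first-occurrence position is at most $k$, which is strictly less than the first-occurrence position $j$ of $G_b$; hence $G_c \succ' G_b$.

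The one step I expect to be delicate is ruling out $|G_b \setminus G_c| = 0$. If this happened, then $G_b \seq G_c$, and since both are facets of $\Delta[S]$, the maximality of $G_b$ forces $G_b = G_c$; but then $F_k \cap S = G_b$ with $k < j$, contradicting the minimality of $j$ as the first representative of $G_b$ in the list $F_1 \cap S, F_2 \cap S, \dots$. This argument is where both ingredients of the setup are used simultaneously: the hypothesis that restrictions of facets are facets (so that $G_c$ is itself a maximal face and the containment $G_b \seq G_c$ is meaningful) and the construction of $\succ'$ by first occurrences. Once this point is secured, the remaining verifications are formal intersections with $S$.
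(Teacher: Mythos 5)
Your proof is correct and follows the same strategy as the paper's: define the order on $\calF(\Delta[S])$ via first occurrences of restrictions $F\cap S$ along the strong shelling order, intersect the three conditions of Definition \ref{nonpure strongly shellable} with $S$, and rule out the degenerate case $|G_b\setminus G_c|=0$ by exactly the same maximality-plus-minimality-of-$j$ contradiction the paper uses. The delicate point you flag is indeed the crux, and you resolve it as the paper does.
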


\begin{proof}
    Let $\succ$ be a strong shelling order on $\calF(\Delta)$. For each $F\in \calF(\Delta[S])$, let $\widetilde{F}$ be the first facet in $\calF(\Delta)$ with respect to $\succ$ such that $\widetilde{F}\cap S=F$. Then, we have the following induced order $\succ_S$ on $\calF(\Delta[S])$:
    \[
    \text{for each $F,G\in \calF(\Delta[S])$, $F\succ_S G$ if and only if $\widetilde{F}\succ \widetilde{G}$.}
    \]
    To check that $\succ_S$ is a strong shelling order, we take arbitrary $F,G\in \calF(\Delta[S])$ with $F\succ_S G$. Thus, $\widetilde{F}\succ \widetilde{G}$. As $\succ$ is a strong shelling order, one can find an $L' \in \calF(\Delta)$ such that $L' \succ \widetilde{G}$ and satisfies:
    \[
    |\widetilde{G} \setminus L'|=1 \quad \text{ and }\quad
    \widetilde{F}\cap \widetilde{G} \subseteq L' \subseteq \widetilde{F}\cup \widetilde{G}.
    \]
    Assume that $L \coloneqq L'\cap S$ and $\widetilde{G} \setminus L' = \{a\}$.

    We claim that $a \in G \setminus L$. In fact, it is easy to see that $a \notin L$ since $a \notin L'$. On the other hand, if $a \notin G$, then $G \seq L'\cap S$, i.e., $G \seq L$. Note that $L$ and $G$ are facets of $\Delta[S]$, so $L=G$. However, we will have $L'\cap S=\widetilde{G}\cap S$ and $L' \succ \widetilde{G}$. This contradicts to the assumption that $\widetilde{G}$ is the first facet in $\calF(\Delta)$ whose restriction to $S$ is $G$.

    Consequently, $a\in G\setminus L \subseteq \widetilde{G}\setminus L'=\Set{a}$, and hence $|G\setminus L|=1$. On the other hand, it is clear that $F\cap G\subseteq L \subseteq F\cup G$. Finally, $\widetilde{L}\succcurlyeq L'\succ \widetilde{G}$, hence $L\succ_S G$.
    Thus $\succ_S$ is a strong shelling order of $\Delta[S]$.
\end{proof}

Recall that if $\Gamma$  and $\Delta$ are two complexes over disjoint vertex sets, the \Index{join} of them is the complex on the vertex set $\calV(\Gamma)\sqcup \calV(\Delta)$:
\[
\Gamma * \Delta \coloneqq \Set{F\sqcup G | F\in \Gamma \text{ and }G\in \Delta}.
\]
It is well-known that  the join of two complexes is shellable if and only if each of the complexes is shellable; see \cite[Remark 10.22]{MR1401765}. We also have a strongly shellable version here.

\begin{proposition}
    \label{nonpure join}
    Let $\Gamma$ and $\Delta$ be two complexes. Then the join complex $\Gamma*\Delta$ is strongly shellable if and only if both $\Gamma$ and $\Delta$ are strongly shellable.
\end{proposition}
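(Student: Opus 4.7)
The plan is to handle the two directions separately; the forward direction is short once we invoke an earlier proposition, while the backward direction requires constructing an explicit shelling order via a lexicographic product.

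For the forward implication, suppose $\Gamma * \Delta$ is strongly shellable. Its facets are precisely the disjoint unions $F \sqcup G$ with $F \in \calF(\Gamma)$ and $G \in \calF(\Delta)$. Setting $S := \calV(\Gamma)$, we have $(\Gamma * \Delta)[S] = \Gamma$, and for every facet $\sigma = F \sqcup G$ of $\Gamma * \Delta$ the intersection $\sigma \cap S = F$ is a facet of $\Gamma$. Thus the hypothesis of Proposition \ref{nonpure restriction} is met, and $\Gamma$ is strongly shellable; by symmetry, so is $\Delta$.

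For the backward implication, fix strong shelling orders $F_1, \dots, F_s$ on $\calF(\Gamma)$ and $G_1, \dots, G_t$ on $\calF(\Delta)$, and define a lexicographic order $\succ$ on $\calF(\Gamma * \Delta)$ by declaring $F_a \sqcup G_b \succ F_{a'} \sqcup G_{b'}$ if and only if either $a < a'$, or $a = a'$ and $b < b'$. To verify that $\succ$ is a strong shelling order, take any $F_{i_1} \sqcup G_{j_1} \succ F_{i_2} \sqcup G_{j_2}$, so either $i_1 < i_2$, or $i_1 = i_2$ with $j_1 < j_2$.

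In the first case, applying the strong shelling property of $\Gamma$ to the pair $F_{i_1}, F_{i_2}$ yields an index $k < i_2$ satisfying the three conditions of Definition \ref{nonpure strongly shellable}; the bridging facet $F_k \sqcup G_{j_2}$ then satisfies the corresponding conditions for the pair in $\Gamma * \Delta$, since the $\Delta$-component is unchanged and $\calV(\Gamma) \cap \calV(\Delta) = \varnothing$. In the second case, the analogous application to $G_{j_1}, G_{j_2}$ in $\calF(\Delta)$ produces some $k < j_2$, and $F_i \sqcup G_k$ serves as the bridging facet. The three union/intersection conditions of Remark \ref{1/2/3 equivalent discription} reduce directly to those already verified in $\Gamma$ or $\Delta$, so there is no genuine obstacle here beyond careful bookkeeping of which coordinate is active in each case.
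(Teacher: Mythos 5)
Your proof is correct and follows essentially the same route as the paper: the lexicographic product of the two strong shelling orders for the "if" direction, with the bridging facet taken in whichever coordinate is active, and an appeal to Proposition \ref{nonpure restriction} with $S=\calV(\Gamma)$ and $S=\calV(\Delta)$ for the "only if" direction. Your explicit two-case split (by whether the $\Gamma$-index changes) is a slightly cleaner exposition than the paper's terse "it suffices to take distinct $F_1,F_2$ and distinct $G_1,G_2$," but the underlying argument is identical.
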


\begin{proof}
    Assume that $\Gamma$ and $\Delta$ are strongly shellable.
    Let $\succ_1$ and $\succ_2$ be two strong shelling orders on $\calF(\Gamma)$  and $\calF(\Delta)$,  respectively. Let $\succ$ be the lexicographic order with respect to $(\succ_1,\succ_2)$ on $\calF(\Gamma*\Delta)$, namely, for $F_1,F_2\in \calF(\Gamma)$  and $G_1,G_2\in\calF(\Delta)$, $F_1\sqcup G_1\succ F_2\sqcup G_2$ precisely whenever either $F_1\succ_1 F_2$, or $F_1=F_2$ and $G_1\succ_2 G_2$.

    To check the strong shellability of $\succ$, it suffices to take distinct $F_1,F_2\in \calF(\Gamma)$ and distinct $G_1,G_2\in \calF(\Delta)$ such that $F_1\sqcup G_1\succ F_2\sqcup G_2$. Then $F_1\succ_1 F_2$.  We can choose $F_3$ such that $F_3 \succ_1 F_2$ in $\calF(\Gamma)$, and satisfies:
    \[
    |F_2 \setminus F_3|=1 \quad \text{ and } \quad
    F_1 \cap F_2 \subseteq F_3 \subseteq F_1\cup F_2.
    \]
    It is clear that $F_3 \sqcup G_2 \succ F_2 \sqcup G_2$, and satisfies
    \[
    |(F_2 \sqcup G_2) \setminus (F_3 \sqcup G_2)|=1
    \]
    with
    \[
    (F_1\sqcup G_1) \cap (F_2\sqcup G_2) \subseteq (F_3\sqcup G_2) \subseteq (F_1\sqcup G_1) \cup (F_2\sqcup G_2).
    \]
    Thus $\succ$ is a strong shelling order of $\Gamma*\Delta$.

    For the other direction, we can apply Proposition \ref{nonpure restriction} by using $S=\calV(\Gamma)$ and $S=\calV(\Delta)$, respectively.
\end{proof}

Recently, Moradi and Khosh-Ahang \cite{arXiv:1601.00456} considered the expansion of simplicial complexes.

\begin{definition}
    Let $\Delta$ be a simplicial complex with the vertex set $\calV(\Delta)=\Set{x_1,\dots,x_n}$ and $s_1,\dots,s_n$ be arbitrary positive integers.  The \Index{$(s_1,\dots,s_n)$-expansion} of $\Delta$, denoted by $\Delta^{(s_1,\dots,s_n)}$, is the simplicial complex with the vertex set $\Set{x_{i,j}\mid 1\le i\le n, 1\le j\le s_i}$ and the facet set
    \[
    \Set{ \{x_{i_1,r_1},\dots,x_{i_{t},r_{t}}\} \mid \{x_{i_1},\dots,x_{i_t}\}\in \mathcal{F}(\Delta) \text{ and } (r_1,\dots,r_{t})\in [s_{i_1}]\times \cdots \times [s_{i_{t}}]}.
    \]
\end{definition}

Now, we wrap up this section with the following strongly shellable version of \cite[Corollary 2.15]{arXiv:1511.04676}.

\begin{theorem}
    \label{expansion-complex}
    Assume that $s_1, \dots, s_n$ are positive integers. Then $\Delta$ is strongly shellable if and only if  $\Delta^{(s_1,\dots,s_n)}$ is so.
\end{theorem}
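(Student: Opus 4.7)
The plan is to establish both implications by explicitly transferring strong shelling orders between $\Delta$ and its expansion. For the forward direction, given a strong shelling order $F_1 \prec F_2 \prec \cdots \prec F_m$ on $\calF(\Delta)$, I would order the facets of $\Delta^{(s_1,\dots,s_n)}$ by listing, for each $F_i$ in turn, all the expansions $F_i^{(r)}$ with the index tuples $r$ arranged in lexicographic order. To verify strong shellability, consider a pair $F_i^{(r)} \prec F_j^{(r')}$ and split into two cases. When $F_i = F_j$, let $a$ be the first coordinate where $r$ and $r'$ differ, take $F_k = F_j$, and define $r''$ by copying $r'$ but replacing its $a$-th coordinate by $r_a$; the three conditions then reduce to a one-coordinate check.

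When $F_i \ne F_j$, I would invoke strong shellability of $\Delta$ to produce a witness $F_k \prec F_j$, say with $F_j \setminus F_k = \{x_{j_q}\}$, and lift $F_k$ to $F_k^{(r'')}$ by the following rule: for each vertex $x_{k_b} \in F_k$, set $r''_b$ to match the corresponding coordinate of $r'$ if $x_{k_b} \in F_j$, and to match the corresponding coordinate of $r$ if $x_{k_b} \in F_k \setminus F_j$ (which lies in $F_i$ by condition $(3)'$ of Remark \ref{1/2/3 equivalent discription}). I expect the main obstacle to be the careful bookkeeping required for this lifted witness; in particular, verifying $|F_j^{(r')} \setminus F_k^{(r'')}| = 1$ requires observing that the unique missing vertex must be $x_{j_q, r'_q}$, since $x_{j_q} \notin F_k$ ensures no vertex of $F_k^{(r'')}$ has first coordinate $x_{j_q}$.

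For the backward direction, given a strong shelling order $\prec$ on $\calF(\Delta^{(s_1,\dots,s_n)})$, I would induce an order $\prec_0$ on $\calF(\Delta)$ by setting $F \prec_0 G$ precisely when $F^{(\mathbf{1})} \prec G^{(\mathbf{1})}$, where $\mathbf{1} = (1,\dots,1)$. The clean key observation here is that for any pair $F_i \prec_0 F_j$, any witness facet $H = F_k^{(s)}$ produced by strong shellability of the expansion for $(F_i^{(\mathbf{1})}, F_j^{(\mathbf{1})})$ must satisfy $H \subseteq F_i^{(\mathbf{1})} \cup F_j^{(\mathbf{1})}$; but every vertex on the right-hand side has second index equal to $1$, which forces $s = \mathbf{1}$. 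The three strong shelling conditions for $H$ in the expansion then translate verbatim to the corresponding conditions for $F_k$ relative to $F_i, F_j$ in $\Delta$, and $F_k \prec_0 F_j$ follows from $H = F_k^{(\mathbf{1})} \prec F_j^{(\mathbf{1})}$. This direction is essentially effortless once this observation is in place.
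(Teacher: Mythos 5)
Your backward direction is essentially identical to the paper's: both identify $\Delta$ with the $(1,\dots,1)$-slice of the expansion and observe that the witness facet $H \subseteq F_i^{(\mathbf{1})} \cup F_j^{(\mathbf{1})}$ is forced to have all second indices equal to $1$, hence comes from $\calF(\Delta)$. The forward direction, however, takes a genuinely different route. The paper reduces by induction on $n$ to the case where only one vertex is expanded (i.e.\ $s_2 = \cdots = s_n = 1$), defines an order on $\calF(\Delta')$ in that special case, and constructs a single lifted witness for the one expanded vertex. You instead handle the general expansion in one shot: you define a lexicographic order grouping the facets of $\Delta'$ by their underlying facet in $\Delta$, and you lift the witness $F_k$ to $F_k^{(r'')}$ by a uniform rule (match $r'$ on $F_j\cap F_k$, match $r$ on $F_k\setminus F_j\subseteq F_i$). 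Note that this rule is the same index-matching rule the paper uses for the single expanded vertex, so your argument is really the paper's inductive step carried out globally; what you buy is the elimination of the induction (and of the somewhat awkward notation it drags along), at the cost of slightly heavier bookkeeping in one place. The two cases you split into ($F_i = F_j$ versus $F_i \ne F_j$) and the verifications you sketch are all correct; the only items you leave implicit --- that $F_k \prec F_j$ in $\Delta$ forces $F_k^{(r'')} \prec F_j^{(r')}$ in your grouped order, and that $r''$ is a legal index tuple because the relevant coordinates of $r$ and $r'$ live in the same ranges $[s_{i}]$ --- are both immediate from your construction, so the argument is complete in substance.
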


\begin{proof}
    For simplicity, we will write $\Delta'$ for $\Delta^{(s_1,\dots,s_n)}$.

    \begin{enumerate}[a]
        \item The ``only if'' part:  By induction, it suffices to consider the special case when $s_k=1$ for $k=2,\dots,n$. Let $f: \calV(\Delta') \to \calV(\Delta)$ be the map by assigning each $x_{i,j}$ to $x_i$ and extend it to be a map from $\Delta'$ to $\Delta$.
            Let $\succ_\Delta$ be a strong shelling order on $\calF(\Delta)$. Consider two distinct facets $F_1,F_2\in \calF(\Delta')$.
            \begin{enumerate}[i]
                \item If $f(F_1)\ne f(F_2)$, we define $F_1\succ_{\Delta'} F_2$ if and only if $f(F_1)\succ_{\Delta} f(F_2)$.
                \item If $f(F_1)=f(F_2)$, then $F_1=\{x_{1,i},x_{i_2,1},\dots,x_{i_t,1}\}$ and $F_2=\{x_{1,j},x_{i_2,1},\dots,x_{i_t,1}\}$ for suitable $x_{i_2},\dots,x_{i_t} \in \calV(\Delta)\setminus \{x_1\}$. In this case, we define $F_1\succ_{\Delta'} F_2$ if and only if $i<j$.
            \end{enumerate}

            Now, we show that $\succ_{\Delta'}$ defines a strong shelling order on $\calF(\Delta')$. Indeed, we only need to check two distinct edges $F_1\succ_{\Delta'} F_2$ such that $f(F_1)\ne f(F_2)$. Whence, $f(F_1)\succ_{\Delta} f(F_2)$. Since $\Delta$ is strongly shellable, one can find $\widetilde{F}_3\in \calF(\Delta)$ such that $\widetilde{F}_3 \succ_{\Delta} f(F_2)$, $|f(F_2)\setminus \widetilde{F}_3|=1$ and $f(F_1)\cap f(F_2)\subseteq \widetilde{F}_3 \subseteq f(F_1)\cup f(F_2)$.

            We will construct a facet $F_3\in \calF(\Delta')$ with $f(F_3)=\widetilde{F}_3$ accordingly. Take arbitrary $x_{i}\in \widetilde{F}_3$ and we will find the $x_{i,j}$ for $F_3$.  Here, if $i\ne 1$, $j$ is forced to be $1$. If $x_1\in \widetilde{F}_3$, then $x_1\in f(F_1)\cup f(F_2)$. We have two sub-cases here. If $x_1\in f(F_2)$, we will take the $j$ such that $x_{1,j}\in F_2$. Otherwise, we will take the $j$ such that $x_{1,j}\in F_1$.

            In the following, we verify that $F_3$ is the expected facet proceeding $F_2$ with respect to $\succ_{\Delta'}$.
            \begin{enumerate}[1]
                \item As $f(F_3)=\widetilde{F}_3\succ_{\Delta} f(F_2)$, $F_3\succ_{\Delta'}F_2$.

                \item Now, suppose that $\{x_k\}=f(F_2)\setminus \widetilde{F}_3$. If $k\ne 1$, obviously $F_2\setminus F_3=\{x_{k,1}\}$. If $k=1$, then $F_2\setminus F_3=\{x_{1,j}\}$ for the unique $x_{1,j}\in F_2$. In both cases, $|F_2\setminus F_3|=1$.

                \item For $i\ne 1$, then $x_{i,1}\in F_3$ if and only if $x_i\in \widetilde{F}_3$. On the other hand, $x_{i,1}\in f(F_1)$ or $f(F_2)$ if and only if $x_i\in F_1$ or $F_2$ respectively. Thus
                    \[
                    F_1\cap F_2 \cap \{x_{i,1}\} \subseteq F_3\cap \{x_{i,1}\} \subseteq (F_1 \cup F_2)\cap \{x_{i,1}\}.
                    \]
                    For $i=1$, we may assume that $x_1\in f(F_1)\cup f(F_2)$.
                    Write $X_1=\Set{x_{1,1},\dots,x_{1,s_1}}$.
                    Depending on whether $x_1\in f(F_2)$ or not, we have $F_3\cap X_1=F_2\cap X_1$ or $F_1\cap X_1$. Thus,
                    \[
                    F_1\cap F_2 \cap X_1 \subseteq F_3\cap X_1 \subseteq (F_1 \cup F_2)\cap X_1.
                    \]
                    To sum up, we have $F_1\cap F_2 \subseteq F_3 \subseteq F_1\cup F_2$.
            \end{enumerate}
            Therefore, $\succ_{\Delta'}$ is a strong shelling order on $\Delta'$.

        \item The ``if'' part: Let $\succ_{\Delta'}$ be a strong shelling order on $\calF(\Delta')$. We will identify $x_{i,1}\in \calV(\Delta')$ with $x_i\in\calV(\Delta)$, treating $\Delta$ as a subcomplex of $\Delta'$. Thus, one has a natural induced linear order $\succ_{\Delta}$ on $\calF(\Delta)$. It suffices to show that $\succ_{\Delta}$ is a strong shelling order.

            Take two distinct facets $F_1\succ_{\Delta} F_2$ in $\calF(\Delta)\subset \calF(\Delta')$. Thus, by the strong shellability of $\Delta'$, one has $F_3\in \calF(\Delta')$ with $F_3\succ_{\Delta'}F_2$, $|F_2\setminus F_3|=1$ and $F_1\cap F_2\subseteq F_3 \subseteq F_1\cup F_2$. As $F_1\cup F_2 \subset \calV(\Delta)$, $F_3\subset \calV(\Delta)$. In particular, $F_3\in \calF(\Delta)$. Therefore, $F_3 \succ_\Delta F_2$. \qedhere
    \end{enumerate}
\end{proof}

\section{Codimension one graph}

Zheng \cite{MR2100472} considered the following property of simplicial complexes.

\begin{definition}
    A complex is called  \Index{connected in codimension one} if for any two facets $F$ and $G$ with $\dim(F) \geq \dim(G)$, there exists a chain of facets $F = F_0, \cdots, F_n = G$ between $F$ and $G$ such that
$\dim(F_i \cap F_{i+1})=\dim(F_{i+1})-1$ for all $i = 0,\cdots, n-1$.
\end{definition}

\begin{lemma}
    [{\cite[Lemma 9.1.12]{MR2724673}}]
    \label{lem-connected-1}
    Every Cohen-Macaulay complex is connected in codimension one.
\end{lemma}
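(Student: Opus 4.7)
The plan is to induct on $d=\dim(\Delta)$ using the two standard structural consequences of the Cohen-Macaulay property, both derivable from Reisner's criterion: (i) $\Delta$ is pure, so every facet has dimension $d$; and (ii) for each face $\sigma\in\Delta$, the link $\link_\Delta(\sigma)$ is Cohen-Macaulay of dimension $d-|\sigma|$. Purity lets us reformulate the conclusion as: any two facets $F,G$ of $\Delta$ can be joined by a chain $F=F_0,F_1,\ldots,F_n=G$ of facets with $\dim(F_i\cap F_{i+1})=d-1$ for every $i$.

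The base cases $d\le 0$ are immediate: when $d=0$ any two vertex-facets $\{v\},\{w\}$ satisfy $\dim(\{v\}\cap\{w\})=-1=d-1$, so the one-step chain works. For the inductive step, assume $d\ge 1$ and the result for all Cohen-Macaulay complexes of smaller dimension. The first ingredient is that $\Delta$ is topologically connected, since Reisner's criterion applied to the empty face forces $\tilde H_0(\Delta;k)=0$. Thus the graph whose vertices are the facets of $\Delta$ and whose edges join facets that share at least one vertex is connected, so it suffices to prove the codimension-one chain exists when $F$ and $G$ share a common vertex $v$.

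In that case, $F\setminus\{v\}$ and $G\setminus\{v\}$ are facets of $\link_\Delta(v)$, a Cohen-Macaulay complex of dimension $d-1$ by (ii). By the inductive hypothesis, there is a chain of facets of $\link_\Delta(v)$ connecting them through codimension-one intersections. Re-adjoining $v$ to each facet produces a chain of facets of $\Delta$, all containing $v$, and since adjoining a common vertex preserves the intersection up to adding $v$, consecutive terms of the lifted chain intersect in a face of dimension $d-1$. Concatenating the local chains obtained in this way for consecutive facets along a vertex-sharing path from $F$ to $G$ produces the desired global chain.

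The argument is essentially a bookkeeping exercise once the two structural inputs (purity and link-closure of the Cohen-Macaulay property) are granted, so I do not anticipate a genuine obstacle. The only point that warrants a careful sentence is that Reisner's criterion supplies ordinary topological connectedness directly, while codimension-one connectedness must be extracted indirectly by passing to links of vertices; the inductive step is precisely this passage from vertex-sharing to codimension-one sharing.
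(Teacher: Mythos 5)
The paper cites this lemma from Herzog--Hibi without supplying a proof, so there is no in-paper argument to compare against; the task is simply to check whether your argument is sound, and it is. Both structural inputs you invoke are standard consequences of Reisner's criterion: a Cohen--Macaulay complex is pure, and the link of any face is again Cohen--Macaulay of the expected dimension. The two points that genuinely warrant care are exactly the ones you flag. First, for $d\ge 1$, Reisner applied to the empty face gives $\tilde H_0(\Delta;k)=0$, and since $\Delta$ is pure of dimension $\ge 1$ every edge of a connecting vertex-path lies inside a facet, so connectedness of the $1$-skeleton upgrades to connectedness of the graph whose vertices are facets and whose edges record a shared vertex. Second, the lift from $\link_\Delta(v)$ back to $\Delta$ is legitimate because the link is pure of dimension $d-1$, so each of its facets is $H\setminus\{v\}$ for a facet $H\ni v$ of $\Delta$, and adjoining the common vertex $v$ carries a codimension-one chain in the link to a codimension-one chain in $\Delta$; concatenating these local chains along the vertex-sharing path from $F$ to $G$ finishes the induction. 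This combinatorial--topological induction via links is a clean and elementary route to the statement (the cited source arrives at the same fact through a depth argument on Stanley--Reisner rings, but since the paper only quotes the result, your self-contained argument is exactly what is needed).
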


As pure shellable complexes are Cohen-Macaulay by \cite[Theorem 8.2.6]{MR2724673}, the following simple result follows easily from the definition.

\begin{corollary}
    \label{connected in codimension one}
    Let $\Delta$ be a pure shellable complex. Then there exists a linear order on $\mathcal{F}(\Delta)$, say $F_1, \ldots, F_m$, such that $\braket{ F_1, \ldots, F_k }$ is connected in codimension one for each $k \in [m]$.
\end{corollary}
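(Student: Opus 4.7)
The plan is to exploit the hereditary nature of shelling orders: any initial segment of a shelling order is itself a shelling of the subcomplex it generates, and then invoke Lemma \ref{lem-connected-1} on each initial segment.

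First, since $\Delta$ is pure shellable of some dimension $d$, fix a shelling order $F_1, F_2, \ldots, F_m$ of $\calF(\Delta)$. I claim this is the linear order we want. To verify it, I would fix $k \in [m]$ and analyze the subcomplex $\Delta_k \coloneqq \braket{F_1, \ldots, F_k}$ step by step.

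Next, I would check that $\Delta_k$ is itself pure shellable. For purity: each $F_i$ (with $i \le k$) has dimension $d$, and since $F_1, \ldots, F_m$ are the facets of $\Delta$, no $F_i$ is properly contained in any $F_j$; hence $F_1, \ldots, F_k$ are precisely the facets of $\Delta_k$, all of dimension $d$. For shellability: the defining condition that $\braket{F_1, \ldots, F_{j-1}} \cap \braket{F_j}$ is pure of dimension $d-1$ for each $2 \le j \le k$ is inherited verbatim from the shelling of $\Delta$.

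Then I would invoke the classical fact (Theorem 8.2.6 of \cite{MR2724673}) that pure shellable complexes are Cohen-Macaulay, so $\Delta_k$ is Cohen-Macaulay. Applying Lemma \ref{lem-connected-1} to $\Delta_k$ yields that $\Delta_k$ is connected in codimension one, which is exactly what needs to be shown.

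There is no real obstacle here; the argument is an iterated application of two cited results together with the elementary observation that a shelling restricts to a shelling on each initial segment. The only point that deserves explicit mention is verifying purity of $\Delta_k$, which is immediate from the fact that equidimensional facets cannot contain one another.
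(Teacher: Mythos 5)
Your proof is correct and follows exactly the route the paper intends: the paper sets up Lemma~\ref{lem-connected-1} (Cohen--Macaulay $\Rightarrow$ connected in codimension one) and notes that pure shellable complexes are Cohen--Macaulay, leaving the reader to observe, as you do, that each initial segment of a shelling order is itself a pure shellable complex. Your explicit verification of purity of $\braket{F_1,\ldots,F_k}$ is the only non-trivial point and you handle it correctly.
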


\begin{definition}
    \label{distance}
    For a complex $\Delta$, the \Index{distance} between two facets $F_1$ and $F_2$ is defined by
    \[
    \dis(F_1, F_2)\coloneqq \min(\dim(F_1), \dim(F_2))-\dim(F_1 \cap F_2),
    \]
    which can be easily verified to be $\min(|F_1 \setminus F_2|, |F_2 \setminus F_1|)$.
    Sometimes, we will also write it as $\dis_{\Delta}(F_1, F_2)$, in order to emphasize the underlying complex.
\end{definition}

Note that in the pure case, the function $\dis_\Delta$  satisfies the usual triangle inequality. However, this is generally false in the nonpure case.

\begin{lemma}
    \label{nonpure subset}
    Let $\Delta$ be a complex. If $F,G,H$ are three facets such that
    \[
    \min(\dim(F), \dim(G)) \leq \dim(H) \leq \max(\dim(F), \dim(G)),
    \]
    then
    $\dis(F,H)+\dis(H,G)=\dis(F,G)$ if and only if $F\cap G \subseteq H \subseteq F\cup G$.
\end{lemma}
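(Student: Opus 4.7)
The plan is to reduce the claimed equivalence to a purely combinatorial identity among cardinalities of certain intersections, and then to split that identity into two one-sided inequalities which must each be tight.

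Without loss of generality I would assume $\dim(F)\le\dim(G)$, i.e.\ $|F|\le|G|$. The hypothesis then gives $|F|\le|H|\le|G|$, which is important because it resolves every minimum appearing in the definition of $\dis$:
\[
\dis(F,G)=|F|-|F\cap G|,\qquad \dis(F,H)=|F|-|F\cap H|,\qquad \dis(H,G)=|H|-|H\cap G|.
\]
Plugging in and cancelling the common $|F|$, the equation $\dis(F,H)+\dis(H,G)=\dis(F,G)$ becomes the cardinality identity
\[
|F\cap H|+|H\cap G|=|F\cap G|+|H|.
\]

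Next I would apply inclusion--exclusion to the two subsets $F\cap H$ and $G\cap H$ of $H$:
\[
|F\cap H|+|H\cap G|=|(F\cup G)\cap H|+|F\cap G\cap H|,
\]
which turns the identity into
\[
\bigl(|(F\cup G)\cap H|-|H|\bigr)=\bigl(|F\cap G|-|F\cap G\cap H|\bigr).
\]
Now comes the key observation: the left-hand side is $\le 0$ since $(F\cup G)\cap H\subseteq H$, with equality if and only if $H\subseteq F\cup G$; the right-hand side is $\ge 0$ since $F\cap G\cap H\subseteq F\cap G$, with equality if and only if $F\cap G\subseteq H$. Hence the identity forces both quantities to vanish simultaneously, and is therefore equivalent to the pair of containments $F\cap G\subseteq H\subseteq F\cup G$.

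The whole proof is short and self-contained; the only place where care is needed is in the very first step, when one uses the hypothesis $\min(\dim F,\dim G)\le\dim H\le\max(\dim F,\dim G)$ to remove the minima from the definition of $\dis$. Once that is done, the argument is a clean application of inclusion--exclusion together with the sign trick that a ``$\le 0$ equals $\ge 0$'' equation forces both sides to be zero.
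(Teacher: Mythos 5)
Your proof is correct. The paper's argument resolves the $\min$'s in $\dis$ in exactly the same way and then performs essentially the same count, but it does so by naming all seven atoms of the Venn diagram of $F$, $G$, $H$ (it sets $F=P_1\sqcup P_2\sqcup P_4\sqcup P_5$, $G=P_1\sqcup P_3\sqcup P_4\sqcup P_7$, $H=P_1\sqcup P_2\sqcup P_3\sqcup P_6$) and checks that the distance identity is equivalent to $P_4=P_6=\varnothing$, which is precisely the pair of containments. Your route replaces the explicit atom-by-atom bookkeeping with one application of inclusion--exclusion inside $H$ together with the observation that the identity equates a quantity $\le 0$ with a quantity $\ge 0$, forcing both to vanish. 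The two proofs have the same mathematical content, but yours is slightly leaner: it needs no figure, and the sign trick makes the ``if and only if'' conclusion transparent in one line rather than requiring the reader to translate $P_4=\varnothing$ and $P_6=\varnothing$ back into the containments. The one point you should double-check in your write-up is that the dimension hypothesis is used exactly where you say it is --- to turn each of the three minima into the ``smaller set minus the intersection'' --- and that this uses all three inequalities $|F|\le|H|$, $|H|\le|G|$, $|F|\le|G|$; you have this right.
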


\begin{proof}
    As indicated in Figure \ref{fig: nonpure same}, we may assume that $F=P_1\sqcup P_2 \sqcup P_4 \sqcup P_5$, $G=P_1\sqcup P_3 \sqcup P_4 \sqcup P_7$ and $H=P_1\sqcup P_2 \sqcup P_3 \sqcup P_6$.
    \begin{figure}[!ht]
        \begin{minipage}[h]{\linewidth} \centering
            \includegraphics{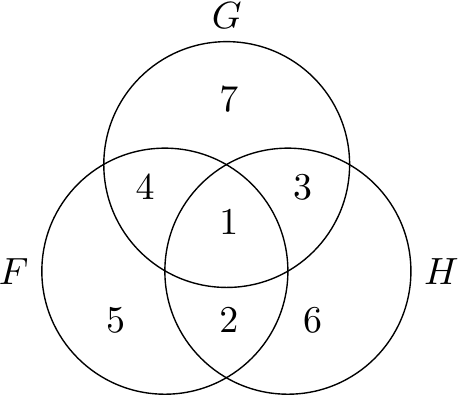}
        \end{minipage}
        \caption{Venn diagram of 3 intersecting sets} \label{fig: nonpure same}
    \end{figure}
    Suppose that $\dim(F)\ge \dim(G)$. Therefore, indeed, $\dim(F)\ge \dim(H)\ge \dim(G)$. In this case, $\dis(F,H)=|H\setminus G|=|P_3|+|P_6|$, $\dis(H,G)=|G\setminus H|=|P_4|+|P_7|$ and $\dis(F,G)=|G\setminus F|=|P_3|+|P_7|$. Thus, the condition $\dis(F,H)+\dis(H,G)=\dis(F,G)$ is translated into $|P_4|+|P_6|=0$, i.e., $P_4=P_6=\varnothing$. But this is equivalent to saying that $F\cap G \subseteq H \subseteq F\cup G$.
\end{proof}

\begin{corollary}
    \label{nonpure same}
    Under the assumptions in Proposition \ref{nonpure restriction}, let $F_1,F_2$ be two distinct facets of $\Delta$ such that $F_1\cap S=F_2\cap S$. If $G$ is another facet such that
    \[
    \min(\dim(F_1), \dim(F_2)) \leq \dim(G) \leq \max(\dim(F_1), \dim(F_2))
    \]
    and
    \[
    \dis(F_1,G)+\dis(G,F_2)=\dis(F_1,F_2),
    \]
    then $G\cap S=F_1\cap S$.
\end{corollary}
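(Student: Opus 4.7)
The plan is to reduce this corollary essentially to a direct application of Lemma \ref{nonpure subset} combined with a set-theoretic intersection against $S$. The hypothesis on $\dim(G)$ lying between $\dim(F_1)$ and $\dim(F_2)$, together with the distance identity $\dis(F_1, G) + \dis(G, F_2) = \dis(F_1, F_2)$, is exactly the input required by Lemma \ref{nonpure subset}, which (after swapping the roles of $F_1$ and $F_2$ if necessary so that $\dim(F_1) \ge \dim(G) \ge \dim(F_2)$) yields the sandwich
\[
F_1 \cap F_2 \;\subseteq\; G \;\subseteq\; F_1 \cup F_2.
\]
Neither the strong shellability of $\Delta$ nor the maximality-preserving property of $S$ from Proposition \ref{nonpure restriction} is used beyond providing the ambient setting; the real work is already packaged in Lemma \ref{nonpure subset}.

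Next, I intersect the above containment with $S$ to obtain
\[
(F_1 \cap F_2) \cap S \;\subseteq\; G \cap S \;\subseteq\; (F_1 \cup F_2) \cap S.
\]
Using the assumption $F_1 \cap S = F_2 \cap S$, I will simplify both the outer terms to $F_1 \cap S$. For the right-hand side, $(F_1 \cup F_2) \cap S = (F_1 \cap S) \cup (F_2 \cap S) = F_1 \cap S$. For the left-hand side, since $F_1 \cap S = F_2 \cap S \subseteq F_2$ and trivially $F_1 \cap S \subseteq F_1$, I get $F_1 \cap S \subseteq F_1 \cap F_2$, hence $F_1 \cap S \subseteq (F_1 \cap F_2) \cap S$; the reverse inclusion is immediate. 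Thus both outer terms equal $F_1 \cap S$, forcing $G \cap S = F_1 \cap S$.

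There is no substantial obstacle here: the only mildly delicate point is checking that $F_1 \cap F_2 \cap S = F_1 \cap S$, which is precisely where the assumption $F_1 \cap S = F_2 \cap S$ enters. The entire argument is a short sandwich-and-intersect computation once Lemma \ref{nonpure subset} is in hand, so I expect the proof to occupy only a few lines.
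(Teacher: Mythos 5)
Your proof is correct and follows essentially the same route as the paper: reduce to the WLOG case $\dim(F_1)\ge\dim(G)\ge\dim(F_2)$, invoke Lemma \ref{nonpure subset} to obtain $F_1\cap F_2\subseteq G\subseteq F_1\cup F_2$, intersect with $S$, and simplify both ends to $F_1\cap S$ using $F_1\cap S=F_2\cap S$. Your write-up merely spells out the two elementary set identities that the paper leaves implicit.
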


\begin{proof}
    Without loss of generality, we assume that $\dim(F_1) \geq \dim(G) \geq \dim(F_2)$.
    By Lemma \ref{nonpure subset}, we have $F_1 \cap F_2 \seq G \seq F_1 \cup F_2$, and thus $(F_1\cap S) \cap (F_2\cap S) \seq G\cap S \seq (F_1\cap S) \cup (F_2\cap S)$. As $F_1\cap S=F_2\cap S$, we have $G\cap S=F_1\cap S$.
\end{proof}

In the above corollary, the condition on dimensions
is necessary, as the following example shows.

\begin{example}
    Let $\Delta$ be a complex on the set $\{1, 2, 3, 4, 5, 6, 7\}$ whose facet set is
    \[
    \Set{F_1=\{1, 2, 3, 4\}, \quad F_2=\{4, 5, 6\},\quad G=\{1, 2, 3, 5, 6, 7\}}.
    \]
    It is direct to check that $\dis(F_1,G)+\dis(G,F_2)=\dis(F_1,F_2)$. If $S= \{4, 7\}$, then $F_1\cap S=F_2\cap S= \{4\}$, but $G\cap S=\{7\} \neq F_1\cap S$.
\end{example}

Note that every finite graph $\Gamma$ has a well-defined distance function defined from $\calV(\Gamma)^2$ to $\NN\cup \{+\infty\}$. We will denote it by $\dis_\Gamma$.

\begin{definition}
    Given a complex $\Delta$, the \Index{codimension one graph} related to $\Delta$, denoted by $\Gamma(\Delta)$, is a finite simple graph whose vertex set is $\mathcal{F}(\Delta)$, and two facets $F$ and $G$ are adjacent in $\Gamma(\Delta)$ if and only if $\dis_{\Delta}(F, G)=1$.
    The codimension one graph $\Gamma(\Delta)$ will be called \Index{harmonious} with respect to $\Delta$ if
    $\dis_\Delta(F,G)=\dis_{\Gamma(\Delta)}(F,G)$ for every pair of facets $F$ and $G$.  In this case, we also call $\Delta$ a \Index{harmonious} complex.
\end{definition}

\begin{example}
    \label{example:3.2}
    Let $\Delta$ be the complex whose facets are $\{1, 2\}$, $\{2, 3\}$, $\{3, 4\}$ and $\{4, 5\}$. The codimension one graph of $\Delta$ is pictured in Figure \ref{Fig:codim1}.  For simplicity, in this figure, we write $x_{i,j}$ for the vertex corresponding to the facet $\{i,j\}$.
    \begin{figure}[!ht]
        \begin{minipage}[h]{\linewidth} \centering
            \begin{minipage}[h]{\linewidth} \centering
                \includegraphics{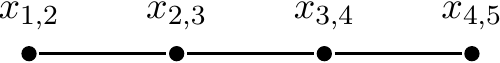}
            \end{minipage}       
            \caption{Codimension one graph of $\Delta$} \label{Fig:codim1}
        \end{minipage}
    \end{figure}

    Note that
    \[
    \dis_{\Gamma(\Delta)}(\{1, 2\},\, \{4, 5\})=3\ne \dis_{\Delta}(\{1, 2\},\, \{4, 5\})=2.
    \]
    Hence $\Delta$ is not a harmonious complex.
\end{example}

\begin{lemma}
    \label{lem-connected-2}
    If $\Delta$ is a Cohen-Macaulay complex, then $\Gamma(\Delta)$ is connected.
\end{lemma}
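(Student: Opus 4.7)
The plan is to invoke Lemma \ref{lem-connected-1} which tells us that every Cohen-Macaulay complex is connected in codimension one, and then observe that the chain furnished by this connectivity notion is exactly a path in the codimension one graph $\Gamma(\Delta)$.

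First I would note that a Cohen-Macaulay complex $\Delta$ is pure; let $d = \dim(\Delta)$ so every facet has dimension $d$. Fix two facets $F, G \in \calF(\Delta)$; since $\dim(F) = \dim(G) = d$, by Lemma \ref{lem-connected-1} there exists a chain of facets $F = F_0, F_1, \ldots, F_n = G$ with $\dim(F_i \cap F_{i+1}) = \dim(F_{i+1}) - 1 = d - 1$ for each $i$.

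Next I would translate this into an adjacency statement in $\Gamma(\Delta)$. Since $|F_i| = |F_{i+1}| = d+1$ and $|F_i \cap F_{i+1}| = d$, we get $|F_i \setminus F_{i+1}| = |F_{i+1} \setminus F_i| = 1$, hence
\[
\dis_\Delta(F_i, F_{i+1}) = \min(|F_i \setminus F_{i+1}|,\, |F_{i+1} \setminus F_i|) = 1.
\]
By the definition of $\Gamma(\Delta)$, the facets $F_i$ and $F_{i+1}$ are adjacent in the codimension one graph. Thus $F = F_0, F_1, \ldots, F_n = G$ is a walk in $\Gamma(\Delta)$ connecting $F$ to $G$, and since $F, G$ were arbitrary vertices of $\Gamma(\Delta)$, the graph is connected.

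There is no real obstacle here; the argument is essentially a repackaging of the definitions, with Lemma \ref{lem-connected-1} doing the substantive work. The only thing to be careful about is that the definition of ``connected in codimension one'' requires a chain whose consecutive intersections drop dimension by exactly one, which in the pure case is precisely the adjacency relation of $\Gamma(\Delta)$.
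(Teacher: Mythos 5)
Your argument is correct and takes exactly the approach the paper intends: the paper's own proof is the one-line remark that the lemma is ``simply a paraphrase of Lemma \ref{lem-connected-1},'' and you have spelled out that paraphrase (purity of Cohen-Macaulay complexes plus the observation that codimension-one intersection is the adjacency relation of $\Gamma(\Delta)$).
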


\begin{proof}
    This is simply a paraphrase of Lemma \ref{lem-connected-1}.
\end{proof}

We will have more control regarding the connectivity when strong shellability is present.

\begin{lemma}
    \label{distance of graph less than complex}
    Let $\Delta$ be a strongly shellable complex. Then $\dis_{\Gamma(\Delta)}(F, G) \leq \dis_{\Delta}(F, G)$  for each pair of facets $F, G \in \Delta$. In particular, $\Gamma(\Delta)$ is connected.
\end{lemma}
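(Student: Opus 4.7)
The plan is to proceed by induction on $d := \dis_\Delta(F, G)$. The base cases $d = 0$ (which forces $F = G$ since both are facets) and $d = 1$ (so $F$ and $G$ are adjacent in $\Gamma(\Delta)$ directly by definition) are immediate, so the real work lies in the inductive step.

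For $d \ge 2$, I would fix a strong shelling order on $\calF(\Delta)$ and, after relabelling, assume $F = F_i$ and $G = F_j$ with $i < j$. Strong shellability supplies a facet $F_k$ with $k < j$ satisfying $|F_j \setminus F_k| = 1$, $F_i \cap F_j \subseteq F_k$, and $F_k \subseteq F_i \cup F_j$. The first condition already tells us that $F_k$ and $F_j$ are adjacent in $\Gamma(\Delta)$. What remains is to show $\dis_\Delta(F_i, F_k) \le d - 1$, after which the inductive hypothesis yields $\dis_{\Gamma(\Delta)}(F_i, F_k) \le d - 1$, and concatenating with the edge $F_k F_j$ in $\Gamma(\Delta)$ gives the desired bound $\dis_{\Gamma(\Delta)}(F_i, F_j) \le d$.

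To control $\dis_\Delta(F_i, F_k)$, I would appeal to the partition picture in Figure \ref{Fijk}, writing $F_i = P_1 \sqcup P_2 \sqcup P_3$, $F_k = P_2 \sqcup P_3 \sqcup P_4$, and $F_j = P_3 \sqcup P_4 \sqcup P_5$ with $|P_5| = 1$. The key auxiliary fact is $\dim F_j \le \dim F_k$, which is built into Definition \ref{nonpure strongly shellable} via Remark \ref{1/2/3 equivalent discription} and which forces $|P_2| \ge |P_5| = 1$. From the partition we read off
\[
\dis_\Delta(F_i, F_j) = \min(|P_1| + |P_2|,\, |P_4| + 1), \qquad \dis_\Delta(F_i, F_k) = \min(|P_1|,\, |P_4|),
\]
and then $|P_2| \ge 1$ immediately delivers $\min(|P_1|, |P_4|) \le \min(|P_1| + |P_2| - 1,\, |P_4|) = \dis_\Delta(F_i, F_j) - 1$, closing the induction.

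Connectivity of $\Gamma(\Delta)$ then follows because $\dis_\Delta(F,G)$ is always finite. I do not foresee a serious obstacle: the only subtle point is the inequality $|P_2| \ge 1$, which is precisely the extra ingredient separating strong shellability from ordinary shellability and which lets the inductive step close by a single graph edge rather than merely by a chain of facets sharing codimension-one faces.
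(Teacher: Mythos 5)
Your proof is correct and follows the same inductive strategy as the paper: induct on $\dis_\Delta(F,G)$, pull in the facet $F_k$ supplied by strong shellability, show $\dis_\Delta(F_i,F_k)\le d-1$, and concatenate with the edge $F_kF_j$ in $\Gamma(\Delta)$. The only notable difference is that the paper first passes to a dimension-decreasing strong shelling order (via Corollary \ref{d dimension}) and then splits into cases $\dim F_k\le\dim F_i$ and $\dim F_k>\dim F_i$, whereas you work directly with $\dis=\min(|F\setminus G|,|G\setminus F|)$ and the single inequality $|P_2|\ge1$, which sidesteps the rearrangement entirely — a modest but clean simplification.
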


\begin{proof}
    Let $F_1, F_2, \cdots, F_t$ be a dimension-decreasing strong shelling order. We will prove by induction on the distance $\dis_{\Delta}(F, G)$.
    The case $\dis_\Delta(F,G)=0$ is trivial. Thus, we will assume that $F\ne G$ and $\dis_\Delta(F,G)>0$.
    Let $F=F_i$ and $G=F_j$ with $i<j$. Then there exists $k<j$, such that $|F_j \setminus F_k|=1$ and $F_i\cap F_j \subseteq F_k \subseteq F_i\cup F_j$. Obviously, $\dis_{\Delta}(F_k, F_j)=1=\dis_{\Gamma(\Delta)}(F_k, F_j)$.  We have the following two subcases:
    \begin{enumerate}[1]
        \item $\dim(F_k) \leq \dim(F_i)$. In the case,  $\dis_{\Delta}(F_i, F_k)=|F_k \setminus F_i|$ and $\dis_{\Delta}(F_i, F_j)=|F_j \setminus F_i|$. Since $|F_j \setminus F_i|= |F_k \setminus F_i|+1$ by Remark \ref{1/2/3 equivalent discription}, it follows that $\dis_{\Delta}(F_i, F_k)=\dis_{\Delta}(F_i, F_j)-1$.
        \item \label{lemma-case-2} $\dim(F_k) > \dim(F_i)$. In the case,  $\dis_{\Delta}(F_i, F_k)=|F_i \setminus F_k| < |F_k \setminus F_i|$. Since $\dis_{\Delta}(F_i, F_j)=|F_j \setminus F_i|= |F_k \setminus F_i|+1$, it follows that $\dis_{\Delta}(F_i, F_k) < \dis_{\Delta}(F_i, F_j)-1$.
    \end{enumerate}

    By inductive assumption, $\dis_{\Gamma(\Delta)}(F_i, F_k) \leq \dis_{\Delta}(F_i, F_k)$. Hence
    \[
    \dis_{\Gamma(\Delta)}(F_i, F_j) \leq \dis_{\Gamma(\Delta)}(F_i, F_k)+1 \leq \dis_{\Delta}(F_i, F_k)+1 \leq \dis_{\Delta}(F_i, F_j). \qedhere
    \]
\end{proof}

The above proof indicates that the appearance of $F_k$ in the case \ref{lemma-case-2} induces the strict inequality $\dis_{\Gamma(\Delta)}(F_i, F_j) < \dis_{\Delta}(F_i, F_j)$. Thus, we have the following result.

\begin{lemma}
    \label{not add dim}
    Let $\Delta$ be a strongly shellable complex, and $\succ$ be a dimension-decreasing strong shelling order on $\mathcal{F}(\Delta)$. If $\Gamma(\Delta)$ is harmonious with respect to $\Delta$, then for each pair $F_i\succ F_j$, there exists a facet $F_k\succ F_j$, such that:
    \begin{enumerate}[1]
        \item \label{nad-1} $|F_j \setminus F_k|=1$ and $F_i\cap F_j \subseteq F_k \subseteq F_i\cup F_j$;
        \item \label{nad-2} $\dim(F_i)\ge \dim(F_k)\ge \dim(F_j)$.
    \end{enumerate}
    In particular, $\dis(F_i,F_j)=\dis(F_i,F_k)+\dis(F_k,F_j)$ with $\dis(F_k,F_j)=1$.
\end{lemma}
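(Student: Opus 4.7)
The plan is to first apply strong shellability to the pair $F_i\succ F_j$ to produce a candidate $F_k$ satisfying condition \ref{nad-1}, and then use the harmonious hypothesis to force the dimensional bound in \ref{nad-2}.

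By Definition~\ref{nonpure strongly shellable}, there exists a facet $F_k\succ F_j$ with $|F_j\setminus F_k|=1$ and $F_i\cap F_j\subseteq F_k\subseteq F_i\cup F_j$; this gives condition~\ref{nad-1} immediately. Moreover, since $\succ$ is dimension-decreasing and $F_k\succ F_j$, the inequality $\dim(F_k)\ge\dim(F_j)$ comes for free, which is half of~\ref{nad-2}.

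The main step is the upper bound $\dim(F_k)\le\dim(F_i)$, which I would establish by contradiction using harmoniousness. Assume $\dim(F_k)>\dim(F_i)$. I would decompose the three facets as in Remark~\ref{1/2/3 equivalent discription}, writing $F_i=P_1\sqcup P_2\sqcup P_3$, $F_k=P_2\sqcup P_3\sqcup P_4$, and $F_j=P_3\sqcup P_4\sqcup P_5$ with $|P_5|=1$. The assumption $\dim(F_k)>\dim(F_i)$ translates into $|P_4|>|P_1|$, and the dimension-decreasing order gives $\dim(F_i)\ge\dim(F_j)$, i.e.\ $|P_1|+|P_2|\ge|P_4|+1$. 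A direct computation then yields $\dis_\Delta(F_i,F_k)=\min(|P_1|,|P_4|)=|P_1|$ and $\dis_\Delta(F_i,F_j)=\min(|P_1|+|P_2|,|P_4|+1)=|P_4|+1$, so $\dis_\Delta(F_i,F_k)\le \dis_\Delta(F_i,F_j)-2$. Applying Lemma~\ref{distance of graph less than complex} to the pair $(F_i,F_k)$ and combining with the edge between $F_k$ and $F_j$ in $\Gamma(\Delta)$ (since $|F_j\setminus F_k|=1$), one obtains
\[
\dis_{\Gamma(\Delta)}(F_i,F_j)\le\dis_{\Gamma(\Delta)}(F_i,F_k)+1\le\dis_\Delta(F_i,F_k)+1\le\dis_\Delta(F_i,F_j)-1,
\]
contradicting the harmonious hypothesis. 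Therefore $\dim(F_k)\le\dim(F_i)$, completing~\ref{nad-2}.

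For the ``in particular'' assertion, $|F_j\setminus F_k|=1$ together with $\dim(F_k)\ge\dim(F_j)$ forces $\dis(F_k,F_j)=1$. Since $\dim(F_j)\le\dim(F_k)\le\dim(F_i)$ and $F_i\cap F_j\subseteq F_k\subseteq F_i\cup F_j$, Lemma~\ref{nonpure subset} applied to the triple $(F_i,F_j,F_k)$ produces the desired identity $\dis(F_i,F_j)=\dis(F_i,F_k)+\dis(F_k,F_j)$. The main obstacle is bookkeeping the $\min$ expressions in the definition of $\dis_\Delta$ under both the dimension-decreasing order and the strict inequality $\dim(F_k)>\dim(F_i)$, so as to produce the decisive gap of $2$ that renders harmoniousness untenable; after that, the ``in particular'' clause is immediate from the previously established lemmas.
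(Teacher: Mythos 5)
Your proof is correct and follows essentially the same route as the paper, which presents this lemma only by pointing back to the case analysis in the proof of Lemma~\ref{distance of graph less than complex}: there, the subcase $\dim(F_k)>\dim(F_i)$ yields $\dis_{\Delta}(F_i,F_k)<\dis_{\Delta}(F_i,F_j)-1$ and hence a strict inequality $\dis_{\Gamma(\Delta)}(F_i,F_j)<\dis_{\Delta}(F_i,F_j)$, which is precisely what harmoniousness forbids. You have unpacked that sketch into an explicit computation using the partition from Remark~\ref{1/2/3 equivalent discription}, verified the gap of $2$, and then derived the ``in particular'' clause from Lemma~\ref{nonpure subset}; all of the bookkeeping (in particular $\dis(F_k,F_j)=1$ because $P_2\ne\varnothing$ since $F_k$ cannot be a proper subset of $F_j$) checks out.
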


\begin{remark}
    \label{strong-harmonious}
    As a matter of fact, if $\Delta$ is harmonious, then it is clear from the proof of Lemma \ref{distance of graph less than complex} that any $F_k$ satisfying the condition \ref{nad-1} in Lemma \ref{not add dim} will automatically satisfy the condition \ref{nad-2} as well.
\end{remark}

\begin{definition}
A complex $\Delta$ will be called \Index{quasi-harmonious}, if there exists a dimension-decreasing strong shelling order $\succ$ on $\calF(\Delta)$, such that for each pair $F_i\succ F_j$, there exists a facet $F_k\succ F_j$ such that $F_k$ satisfies the conditions \ref{nad-1} and \ref{nad-2} in Lemma \ref{not add dim}.
\end{definition}

Obviously, strongly shellable harmonious complexes are quasi-harmonious.
Later, in Theorem \ref{strongly shellable=keeping distance order}, we will show that pure strongly shellable complexes are harmonious. Hence pure strongly shellable complexes are always quasi-harmonious. But in the nonpure case, a strongly shellable complex may not necessarily be quasi-harmonious.

\begin{example}
    [Strongly shellable, but not quasi-harmonious]
    Let $\Delta$ be a complex with facets
    \[
    \{3, 4, 5, 6, 7\}, \{2, 4, 5, 6, 7\}, \{2, 3, 5, 6, 7\}, \{2, 3, 4, 6, 7\}, \{2, 3, 4, 5\}, \{1, 6, 7\}.
    \]
    One can check directly that $\Delta$ is a nonpure strongly shellable complex. But for the facets $\{2, 3, 4, 5\}$ and $\{1, 6, 7\}$, the condition for being quasi-harmonious is not satisfied.
\end{example}

On the other hand, nonpure quasi-harmonious complexes are generally not harmonious.

\begin{example}
    [Quasi-harmonious, but not harmonious]
   Let $\Delta$ be a complex with facets
   \[
   \{2,3,5,6\}, \{1,2,3\}, \{2,3,4\},\{3,4,5\},\{4,5,6\}.
   \]
   One can check directly that this strongly shellable complex is a quasi-harmonious, but not harmonious.
\end{example}

However, the distance function does behave more tamely for quasi-harmonious complexes.

\begin{proposition}
    \label{cor:dist-1}
    Suppose that the simplicial complex $\Delta$ is quasi-harmonious with respect to the dimension-decreasing strong shelling order $\succ$ on $\mathcal{F}(\Delta)$.  Then, for each pair of facets $F_i\succ F_j$, there exists a sequence of facets $F_i=G_0,G_1,\dots,G_t=F_j$ such that
    \begin{enumerate}[a]
        \item $t=\dis(F_i,F_j)$;
        \item $\dim(G_0)\ge \dim(G_1)\ge \cdots \ge \dim(G_t)$;
        \item $\dis(G_h,G_{h+1})=1$ for each $h$ with $0\le h \le t-1$.
    \end{enumerate}
\end{proposition}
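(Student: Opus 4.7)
The approach is induction on $t = \dis(F_i, F_j)$, with $\succ$ fixed as the dimension-decreasing strong shelling order witnessing the quasi-harmoniousness. Since distinct facets are mutually incomparable under inclusion, distance is at least $1$, so the base case is $t = 1$, where the two-term sequence $G_0 = F_i$, $G_1 = F_j$ trivially works.

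For the inductive step with $t \geq 2$, I would invoke the quasi-harmonious property at the pair $F_i \succ F_j$ to extract a facet $F_k$ with $F_k \succ F_j$, $|F_j \setminus F_k| = 1$, $F_i \cap F_j \subseteq F_k \subseteq F_i \cup F_j$, and $\dim(F_i) \geq \dim(F_k) \geq \dim(F_j)$. Lemma \ref{nonpure subset} then yields $\dis(F_i, F_k) + \dis(F_k, F_j) = t$; since $F_j \neq F_k$, the equation $|F_j \setminus F_k| = 1$ gives $\dis(F_k, F_j) = 1$, whence $\dis(F_i, F_k) = t - 1$. The plan is then to splice an inductively-constructed sequence from $F_i$ to $F_k$ with the single edge $F_k \to F_j$; this has the correct total length $t$ and the dimension drop $\dim(F_k) \geq \dim(F_j)$ together with the distance $\dis(F_k, F_j) = 1$ handle the final link on the nose.

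The main obstacle I anticipate is that the induction hypothesis applies to the pair $(F_i, F_k)$ only when it is oriented as $F_i \succ F_k$. If $\dim(F_i) > \dim(F_k)$, this is automatic because $\succ$ is dimension-decreasing. The delicate case is $\dim(F_i) = \dim(F_k)$ with $F_k \succ F_i$; to handle it I would apply the hypothesis in the reversed orientation, to the pair $(F_k, F_i)$, obtaining a sequence $F_k = H_0, H_1, \ldots, H_{t-1} = F_i$ of non-increasing dimensions. Since $\dim(H_0) = \dim(H_{t-1}) = \dim(F_i)$, every intermediate $H_h$ is forced to share this common dimension, so reversing the sequence preserves the monotonicity condition (trivially, since it becomes constant); and because $\dis$ is symmetric, consecutive distances stay equal to $1$. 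Appending $F_j$ then completes the construction in both branches of the case split. The key observation that dissolves the orientation subtlety is the forced constancy of dimensions along a non-increasing sequence whose endpoints already share a dimension, which makes reversal harmless.
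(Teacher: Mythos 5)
Your proposal is correct and follows essentially the same route as the paper's own proof: induction on $t=\dis(F_i,F_j)$, extract $F_k$ from the quasi-harmonious condition (i.e., Lemma~\ref{not add dim}), split on whether $F_i\succ F_k$ or $F_k\succ F_i$, and in the latter case note that $\dim(F_i)=\dim(F_k)$ forces the whole inductive chain to have constant dimension so that reversal is harmless. Your write-up merely spells out the reversal step a bit more explicitly than the paper does.
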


\begin{proof}
    Assume that $t=\dis(F_i,F_j)$ and $F_k$ is the facet as in Lemma \ref{not add dim}.  It suffices to consider the case when $t\ge 2$. Whence, $F_i\ne F_k$.  Denote $F_i,F_k$ and $F_j$ by $G_0,G_{t-1}$ and $G_t$ respectively.
    \begin{enumerate}[1]
        \item If $F_i\succ F_k$, by induction, we will have a sequence of facets $G_0,G_1,\dots,G_{t-1}$ such that $\dim(G_0)\ge \dim(G_1)\ge \cdots \ge \dim(G_{t-1})$, and $\dis(G_h,G_{h+1})=1$ for each $h$ with $0\le h \le t-2$.
        \item If $F_k\succ F_i$, then $\dim(F_i)=\dim(F_k)$. Again by induction, we will have a sequence of facets $G_0,G_1,\dots,G_{t-1}$ of same dimension with $\dis(G_h,G_{h+1})=1$ for each $h$ with $0\le h \le t-2$.
    \end{enumerate}

    In either case, we are done after concatenating the sequence with $G_t$ in the end.
\end{proof}

The harmonious property behaves well with respect to restrictions of strong shellability.

\begin{lemma}
    \label{harmonious subcomplex}
    Let $\Delta$ be a harmonious strongly shellable complex. Let $S$ be a non-empty subset of $\calF(\Delta)$ and $\Delta'=\braket{S}$. If $\Delta'$ is also strongly shellable, then it is harmonious as well.
\end{lemma}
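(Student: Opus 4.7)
The plan is to establish $\dis_{\Delta'}(F,G) = \dis_{\Gamma(\Delta')}(F,G)$ for every pair of facets $F,G \in \calF(\Delta') = S$ by squeezing between two inequalities that come almost for free.

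First I would observe the two obvious things. Since the distance in Definition \ref{distance} only depends on the underlying sets of the two facets, we have $\dis_{\Delta'}(F,G) = \dis_{\Delta}(F,G)$ whenever $F,G \in S$. And since the adjacency relation in the codimension one graph is defined by the same condition $\dis(\cdot,\cdot)=1$, it follows that $\Gamma(\Delta')$ is precisely the subgraph of $\Gamma(\Delta)$ induced on the vertex subset $S$. In particular, every path in $\Gamma(\Delta')$ is a path in $\Gamma(\Delta)$, so
\[
\dis_{\Gamma(\Delta)}(F,G) \le \dis_{\Gamma(\Delta')}(F,G).
\]

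Next I would chain together the hypotheses. Because $\Delta'$ is strongly shellable, Lemma \ref{distance of graph less than complex} applied to $\Delta'$ gives $\dis_{\Gamma(\Delta')}(F,G) \le \dis_{\Delta'}(F,G)$. Because $\Delta$ is harmonious, $\dis_{\Delta}(F,G) = \dis_{\Gamma(\Delta)}(F,G)$. Combining these with the two observations above,
\[
\dis_{\Delta'}(F,G) = \dis_{\Delta}(F,G) = \dis_{\Gamma(\Delta)}(F,G) \le \dis_{\Gamma(\Delta')}(F,G) \le \dis_{\Delta'}(F,G),
\]
so equality holds throughout, and $\Delta'$ is harmonious as required.

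There is essentially no obstacle here; the only subtle point worth spelling out in the write-up is why $\Gamma(\Delta')$ is an \emph{induced} subgraph of $\Gamma(\Delta)$, i.e., why the distance function $\dis$ used to define edges does not change when we pass to $\Delta'$. This is immediate because $\dis(F,G) = \min(|F\setminus G|, |G \setminus F|)$ depends only on $F$ and $G$ as subsets of the vertex set, not on the ambient complex.
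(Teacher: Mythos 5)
Your proof is correct and follows essentially the same route as the paper's: both combine the inequality $\dis_{\Gamma(\Delta')} \le \dis_{\Delta'}$ from Lemma \ref{distance of graph less than complex} with the observation that paths in $\Gamma(\Delta')$ are paths in $\Gamma(\Delta)$, using harmoniousness of $\Delta$ to close the squeeze. The only difference is that you spell out explicitly why $\dis_{\Delta'}=\dis_{\Delta}$ and why $\Gamma(\Delta')$ is an induced subgraph, points the paper leaves implicit.
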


\begin{proof}
    Take arbitrary $F_a,F_b\in \calF(\Delta')$. As any minimal path connecting $F_a$ and $F_b$ in $\Gamma(\Delta')$ is a path connecting $F_a$ and $F_b$ in $\Gamma(\Delta)$, one has $\dis_{\Gamma(\Delta')}(F_a,F_b)\ge \dis_{\Gamma(\Delta)}(F_a,F_b)=\dis(F_a,F_b)$. But $\dis_{\Gamma(\Delta')}(F_a,F_b)\le \dis(F_a,F_b)$ by Lemma \ref{distance of graph less than complex}. Consequently, $\dis_{\Gamma(\Delta')}(F_a,F_b)= \dis(F_a,F_b)$.
\end{proof}

For a given linear order $\succ$ on $\mathcal{F}(\Delta)$, a facet $F$ of dimension $k$ is called an \Index{initial facet}, if $F \succ G$ for any other facet $G$ of same dimension.
One can similarly define \Index{terminal facet}.
Meanwhile, for any facet $F_1,F_2$, we have the \Index{interval} $[F_1, F_2]_\succ \coloneqq \Set{G \in \mathcal{F}(\Delta) \mid F_1 \succcurlyeq G \succcurlyeq F_2}$.

\begin{proposition}
    \label{interval ss}
    Let $\Delta$ be a harmonious  strongly shellable  complex with a dimension-decreasing strong shelling order $\succ$ on $\mathcal{F}(\Delta)$. If $F_i$ is an initial facet of $\succ$, then for every $F_j$ with $F_i\succ F_j$, the interval $[F_i, F_j]_\succ$ generates a harmonious strongly shellable complex.
\end{proposition}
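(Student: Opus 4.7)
The plan is to first establish that the restriction of $\succ$ to the interval $[F_i,F_j]_\succ$ is a strong shelling order of the subcomplex $\Delta'=\braket{[F_i,F_j]_\succ}$, and then invoke Lemma~\ref{harmonious subcomplex} to conclude that $\Delta'$ is automatically harmonious. Note first that since the facets of $\Delta$ are pairwise incomparable under inclusion, the facet set $\calF(\Delta')$ coincides with $[F_i,F_j]_\succ$.

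To verify the strong shelling condition on $\Delta'$, I take an arbitrary pair $F_a,F_b\in [F_i,F_j]_\succ$ with $F_a\succ F_b$. By the harmonious strong shellability of $\Delta$ together with Remark~\ref{strong-harmonious}, we may choose a facet $F_k\in\calF(\Delta)$ with $F_k\succ F_b$, $|F_b\setminus F_k|=1$, and $F_a\cap F_b\subseteq F_k\subseteq F_a\cup F_b$, satisfying additionally
\[
\dim(F_a)\ge \dim(F_k)\ge \dim(F_b).
\]
It remains to check that $F_k$ lies in the interval $[F_i,F_j]_\succ$. Since $F_k\succ F_b\succcurlyeq F_j$, we certainly have $F_k\succ F_j$. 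On the other hand, $F_a\in [F_i,F_j]_\succ$ forces $F_i\succcurlyeq F_a$, and because $\succ$ is dimension-decreasing this gives $\dim(F_i)\ge \dim(F_a)\ge \dim(F_k)$. If $\dim(F_i)>\dim(F_k)$ then $F_i\succ F_k$; if $\dim(F_i)=\dim(F_k)$ then the assumption that $F_i$ is an initial facet of its dimension yields $F_i\succcurlyeq F_k$. Either way, $F_k\in [F_i,F_j]_\succ$, so the restricted order is indeed a strong shelling order of $\Delta'$.

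Finally, applying Lemma~\ref{harmonious subcomplex} with $S=[F_i,F_j]_\succ$ (which we have just shown generates a strongly shellable complex) produces the harmoniousness of $\Delta'$. The only nontrivial point in the whole argument is the dimension-and-order bookkeeping showing that the auxiliary $F_k$ supplied by strong shellability does not escape the interval; this is exactly where Remark~\ref{strong-harmonious} and the hypothesis that $F_i$ is initial (so no same-dimension facet precedes it) are used in tandem.
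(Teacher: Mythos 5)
Your proof is correct and follows essentially the same route as the paper's: both use the strong shellability of $\Delta$ together with the dimension bound forced by harmoniousness (you cite Remark~\ref{strong-harmonious}, the paper cites Lemma~\ref{not add dim}, which encodes the same fact) to keep the auxiliary facet $F_k$ inside the interval, and both then appeal to Lemma~\ref{harmonious subcomplex}. Your write-up is slightly more explicit than the paper's in spelling out the two cases ($\dim(F_i) > \dim(F_k)$ versus equality plus initiality) needed to conclude $F_i \succcurlyeq F_k$, which the paper leaves implicit.
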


\begin{proof}
    By Lemma \ref{not add dim}, for each pair of facets $F_a,  F_b \in [F_i, F_j]_\succ$ with $F_a \succ F_b$, there exists a facet $F_c \succ F_b\succcurlyeq F_j$, such that $|F_b \setminus F_c|=1$, $F_a\cap F_b\subseteq F_c \subseteq F_a\cup F_b$, and $\dim(F_c) \leq \dim(F_a)$. Since $F_i$ is an initial facet,
    and $\succ$ is dimension-decreasing, $\dim F_a\le \dim F_i$. Thus, $F_c \in [F_i, F_j]_\succ$. This shows that $[F_i, F_j]_\succ$ generates a strongly shellable complex. As for the expected harmonious property, we apply Lemma \ref{harmonious subcomplex}.
\end{proof}

\begin{corollary}
    \label{pure_k F hss}
    Let $\Delta$ be a harmonious strongly shellable  complex. Then $\overline{\pure_k}(\Delta)$ is also harmonious strongly shellable for any $0 < k \leq \dim(\Delta)$.
\end{corollary}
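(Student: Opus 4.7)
The plan is to reduce the claim to a direct application of Proposition \ref{interval ss} by exhibiting $\overline{\pure_k}(\Delta)$ as the complex generated by a suitable initial interval under a dimension-decreasing strong shelling order.

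First, I would invoke Corollary \ref{d dimension} to obtain a dimension-decreasing strong shelling order $\succ:F_1,\dots,F_t$ on $\calF(\Delta)$. The case where $\Delta$ has no facet of dimension $k$ is vacuous, so I assume the set of $k$-dimensional facets is nonempty; because $\succ$ is dimension-decreasing, these facets form a contiguous block in the order, say $F_i,F_{i+1},\dots,F_j$ with $F_i$ the first and $F_j$ the last $k$-dimensional facet.

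Next I would identify this block with the interval $[F_i,F_j]_\succ$. Indeed, any $F_\ell$ with $F_i \succcurlyeq F_\ell \succcurlyeq F_j$ must satisfy $\dim(F_i)\ge \dim(F_\ell)\ge \dim(F_j)$ by dimension-monotonicity, which forces $\dim(F_\ell)=k$. Since no facet of $\Delta$ can contain another facet, every member of this interval is a facet of the complex they generate, and that complex is pure of dimension $k$ with the $k$-dimensional facets of $\Delta$ as its facet set; this is precisely $\overline{\pure_k}(\Delta)$. Moreover $F_i$ is an initial facet of $\succ$ in the sense of the definition preceding Proposition \ref{interval ss}: among all facets of dimension $k$ it comes first, and no facet of smaller dimension can precede it in a dimension-decreasing order.

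Applying Proposition \ref{interval ss} to this choice of $F_i$ and $F_j$ (the trivial case $F_i=F_j$ giving a simplex, which is both harmonious and strongly shellable) then yields that $[F_i,F_j]_\succ$ generates a harmonious strongly shellable complex, which is $\overline{\pure_k}(\Delta)$. The only mild subtlety is verifying that the interval coincides set-theoretically with the $k$-dimensional facets of $\Delta$; this is where the dimension-decreasing property of $\succ$ does all the work, so no serious obstacle arises.
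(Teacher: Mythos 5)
Your proof is correct and follows the paper's argument exactly: take a dimension-decreasing strong shelling order, identify $\overline{\pure_k}(\Delta)$ as the interval between the initial and terminal facets of dimension $k$, and apply Proposition \ref{interval ss}. You simply supply more detail (the contiguous-block observation and the trivial one-facet case) than the paper's terser version.
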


\begin{proof}
    Take arbitrary dimension-decreasing strong shelling order $\succ$ on $\calF(\Delta)$.  Let $F_i$ and $F_j$ be the initial and terminal facet of dimension $k$ respectively. Then $\calF(\overline{\pure_k}(\Delta))=[F_i,F_j]_\succ$. Now, we apply Proposition \ref{interval ss}.
\end{proof}

\section{Strongly shellable complexes, pure case}

In this section, we focus on pure strongly shellable complexes.
In this case, the distance function $\dis_\Delta$ in Definition \ref{distance} satisfies the usual triangle inequality:
\[
\dis(F,H)+\dis(H,G)\ge\dis(F,G) \quad \text{for $F,H,G\in \calF(\Delta)$}.
\]

The following lemma is the pure version of Lemma \ref{nonpure subset}.

\begin{lemma}
    \label{subset}
    Let $F,G,H \in \binom{[n]}{d}$. Then $\dis(F,H)+\dis(H,G)=\dis(F,G)$ if and only if $F\cap G \subseteq H \subseteq F\cup G$.
\end{lemma}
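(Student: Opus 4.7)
The plan is to observe that this pure statement is an immediate specialization of Lemma \ref{nonpure subset}. Indeed, the hypothesis of that lemma reads
\[
\min(\dim(F),\dim(G)) \le \dim(H) \le \max(\dim(F),\dim(G)),
\]
and in the present setting all three faces $F,G,H$ belong to $\binom{[n]}{d}$, so they have common dimension $d-1$. Thus the inequality holds trivially (with equality throughout), and Lemma \ref{nonpure subset} applies verbatim to yield the desired equivalence.

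If instead one prefers a short self-contained argument that parallels the proof of Lemma \ref{nonpure subset}, I would partition $[n]$ by Venn diagram membership in the three sets $F,G,H$. Using the notation of Figure \ref{fig: nonpure same}, write $F=P_1\sqcup P_2\sqcup P_4\sqcup P_5$, $G=P_1\sqcup P_3\sqcup P_4\sqcup P_7$, $H=P_1\sqcup P_2\sqcup P_3\sqcup P_6$. Because $|F|=|G|=|H|=d$, the distance function reduces to $\dis(X,Y)=|X\setminus Y|$ for any two of these sets.

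A direct computation then gives
\[
\dis(F,H)=|P_4|+|P_5|,\quad \dis(H,G)=|P_2|+|P_6|,\quad \dis(F,G)=|P_2|+|P_5|,
\]
so that
\[
\dis(F,H)+\dis(H,G)-\dis(F,G)=|P_4|+|P_6|.
\]
This nonnegative quantity (which also re-derives the triangle inequality in the pure case) vanishes precisely when $P_4=P_6=\varnothing$, i.e., when $F\cap G\subseteq H\subseteq F\cup G$. There is no genuine obstacle here; the only thing to take care of is the bookkeeping with the Venn regions, but even that can be bypassed by citing Lemma \ref{nonpure subset}.
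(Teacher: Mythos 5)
Your proposal is correct and matches the paper's approach: the paper also presents Lemma \ref{subset} as the pure specialization of Lemma \ref{nonpure subset}, noting that the dimension hypothesis there becomes vacuous when all three sets lie in $\binom{[n]}{d}$. Your optional self-contained Venn-region computation is likewise sound and simply reproduces, in the equal-cardinality case, the argument the paper already gave for the nonpure lemma.
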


The following fact follows directly from the definition and the above lemma:

\begin{lemma}
    \label{d=d-1+1}
    Let $\Delta$ be a pure complex. Then $\Delta$ is  strongly shellable if and only if there exists a linear order $\succ$ on $\mathcal{F}(\Delta)$, such that whenever $F_i\succ F_j$, there exists a facet $F_k\succ F_j$, such that $\dis(F_k, F_j)=1$ and $\dis(F_i, F_k) = \dis(F_i, F_j)-1$.
\end{lemma}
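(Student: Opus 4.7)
The plan is to translate the equivalent form of strong shellability recorded in Remark \ref{1/2/3 equivalent discription} into distance language via Lemma \ref{subset}. Since $\Delta$ is pure of some dimension $d$, every facet has cardinality $d+1$, so $|F\setminus G|=|G\setminus F|$ for all facets $F,G$, and the distance function becomes a genuine metric with $\dis(F,G)=|F\setminus G|$.

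For the forward direction, I would start with a strong shelling order $\succ$ and any pair $F_i\succ F_j$. The definition, via Remark \ref{1/2/3 equivalent discription}, supplies a facet $F_k\succ F_j$ satisfying $(1)'$–$(3)'$. In the pure case $(1)'$ reads $\dim(F_j\cap F_k)=d-1$, equivalently $|F_j\setminus F_k|=1$, which is $\dis(F_k,F_j)=1$. Conditions $(2)'$ and $(3)'$ say precisely $F_i\cap F_j\subseteq F_k\subseteq F_i\cup F_j$, so Lemma \ref{subset} yields the additive identity $\dis(F_i,F_k)+\dis(F_k,F_j)=\dis(F_i,F_j)$. Substituting $\dis(F_k,F_j)=1$ gives $\dis(F_i,F_k)=\dis(F_i,F_j)-1$, as required.

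The converse is obtained by reversing each step. Given $\succ$ with the stated distance property and any pair $F_i\succ F_j$, pick the facet $F_k\succ F_j$ provided by the hypothesis, so $\dis(F_k,F_j)=1$ and $\dis(F_i,F_k)+1=\dis(F_i,F_j)$. Adding these gives $\dis(F_i,F_k)+\dis(F_k,F_j)=\dis(F_i,F_j)$, and Lemma \ref{subset} converts this into $F_i\cap F_j\subseteq F_k\subseteq F_i\cup F_j$, which is $(2)'$ together with $(3)'$. Since $\dis(F_k,F_j)=1$ in the pure case is exactly $(1)'$, Remark \ref{1/2/3 equivalent discription} shows that $\succ$ is a strong shelling order.

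No real obstacle is expected: the statement is a routine repackaging of $(1)'$–$(3)'$ in the language of the distance function, and the only point that deserves a brief remark is that purity is what allows the identification $|F_j\setminus F_k|=1\Leftrightarrow\dis(F_k,F_j)=1$ and what ensures the triangle-equality form of Lemma \ref{subset} applies symmetrically to the triple $(F_i,F_k,F_j)$.
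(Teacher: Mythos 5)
Your argument is correct and is exactly what the paper intends: the paper states the lemma "follows directly from the definition and the above lemma" (Lemma \ref{subset}), which is precisely the translation of conditions $(1)'$–$(3)'$ of Remark \ref{1/2/3 equivalent discription} into distance language that you carry out. Nothing to add.
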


Given a linear order $\succ$ on $\calF(\Delta)$, if $F\succcurlyeq G$, we will say that $F$ is \Index{on the left side} of $G$. Similar to the phenomenon in Proposition \ref{cor:dist-1}, pure strongly shellable complexes can be further characterized by the distance function as follows.

\begin{proposition}
    \label{distance 1 chain}
    The following statements are equivalent for a pure simplicial complex $\Delta$:
    \begin{enumerate}[a]
        \item \label{dist-1-a} $\Delta$ is strongly shellable.
        \item \label{dist-1-b} There exists a linear order $\succ$ on $\mathcal{F}(\Delta)$, such that for each pair $F,G \in \mathcal{F}(\Delta)$  with $F\succ G$, there exists a chain of facets $F= H_{0}, \ldots, H_{t} = G$ of length $t=\dis(F,G)$ on the left side of $G$ with $\dis(F_{l-1}, F_{l})=1$ for $1 \leq l \leq t$.
        \item \label{dist-1-c} There exists a linear order $\succ$ on $\mathcal{F}(\Delta)$, such that for each pair $F,G \in \mathcal{F}(\Delta)$  with $F\succ G$, there exists a chain of facets $F = H_{0}, \ldots, H_{t} = G$ of length $t=\dis(F,G)$ on the left side of $G$ with $\dis(H_{l_1}, H_{l_2})=l_2-l_1$ for $0\le l_1 \le l_2 \le t$.
        \item \label{dist-1-d} There exists a linear order $\succ$ on $\mathcal{F}(\Delta)$, such that for each pair $F, G \in \mathcal{F}(\Delta)$  with $F\succ G$, either $\dis(F,G)=1$, or $\dis(F,G)\ge 2$ and there exists some $H\succ G$ with $F\ne H$ such that $\dis(F,H)+\dis(H,G)=\dis(F,G)$.
        \item \label{dist-1-e} There exists a linear order $\succ$ on $\mathcal{F}(\Delta)$, such that for each pair $F,G\in \mathcal{F}(\Delta)$  with $F\succ G$, either $\dis(F,G)=1$, or $\dis(F,G)\ge 2$ and there exists some $H\succ G$ with $F\ne H$ such that $F \cap G \seq H \seq F \cup G$.
    \end{enumerate}
\end{proposition}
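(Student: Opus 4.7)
My plan is to prove the equivalences via the cycle \ref{dist-1-a}$\Rightarrow$\ref{dist-1-c}$\Rightarrow$\ref{dist-1-b}$\Rightarrow$\ref{dist-1-d}$\Leftrightarrow$\ref{dist-1-e}$\Rightarrow$\ref{dist-1-a}. The step \ref{dist-1-c}$\Rightarrow$\ref{dist-1-b} is immediate, and \ref{dist-1-d}$\Leftrightarrow$\ref{dist-1-e} follows at once from Lemma~\ref{subset}, since in the pure case the geodesic identity $\dis(F,H)+\dis(H,G)=\dis(F,G)$ and the betweenness condition $F\cap G\subseteq H\subseteq F\cup G$ are equivalent.

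For \ref{dist-1-a}$\Rightarrow$\ref{dist-1-c}, I fix a strong shelling order $\succ$ and induct on $t=\dis(F,G)$. The base $t=1$ is trivial. For $t\ge 2$, Definition~\ref{nonpure strongly shellable} together with Remark~\ref{1/2/3 equivalent discription} and Lemma~\ref{subset} produces a facet $F_k\succ G$ with $\dis(F_k,G)=1$ and $\dis(F,F_k)=t-1$. The subtle point is that $F_k$ need not satisfy $F\succ F_k$. If $F\succ F_k$, I apply induction to the pair $(F,F_k)$ and append $G$. If instead $F_k\succ F$ (which forces $F_k\ne F$ since $\dis(F,F_k)\ge 1$), I apply induction to $(F_k,F)$, reverse the resulting chain so that it starts at $F$ and ends at $F_k$, and then append $G$. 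In either case I obtain a chain $F=H_0,\dots,H_t=G$ lying on the left side of $G$ with consecutive distances $1$. The stronger pairwise equality $\dis(H_{l_1},H_{l_2})=l_2-l_1$ then drops out of the triangle inequality: since the chain has total length $t=\dis(F,G)$, every subadditivity estimate along it must be tight.

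For \ref{dist-1-b}$\Rightarrow$\ref{dist-1-d}, given $F\succ G$ with $t=\dis(F,G)\ge 2$, I set $H:=H_{t-1}$ from the chain supplied by \ref{dist-1-b}. The triangle inequality forces $\dis(F,H)=t-1$, so $\dis(F,H)+\dis(H,G)=t=\dis(F,G)$; moreover $H\ne G$ because $\dis(H,G)=1$, so $H\succ G$ strictly, and $H\ne F$ because $t\ge 2$.

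For the final step \ref{dist-1-e}$\Rightarrow$\ref{dist-1-a}, I verify the criterion of Lemma~\ref{d=d-1+1}. Given $F_i\succ F_j$ with $\dis(F_i,F_j)\ge 2$, I iterate \ref{dist-1-e}: starting from $H_0:=F_i$, as long as $\dis(H_s,F_j)\ge 2$, I apply \ref{dist-1-e} to the pair $(H_s,F_j)$ to obtain $H_{s+1}\succ F_j$ with $H_{s+1}\ne H_s$ and $H_s\cap F_j\subseteq H_{s+1}\subseteq H_s\cup F_j$. The nested inclusions propagate to $F_i\cap F_j\subseteq H_{s+1}\subseteq F_i\cup F_j$, and $\dis(H_{s+1},F_j)$ strictly decreases. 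After finitely many iterations I reach $H_m\succ F_j$ with $\dis(H_m,F_j)=1$ and $F_i\cap F_j\subseteq H_m\subseteq F_i\cup F_j$; by Lemma~\ref{subset} this gives $\dis(F_i,H_m)=\dis(F_i,F_j)-1$, which is exactly the witness demanded by Lemma~\ref{d=d-1+1}, so $\succ$ is a strong shelling order. The main obstacle throughout is the ordering issue in \ref{dist-1-a}$\Rightarrow$\ref{dist-1-c}, resolved by the reversal trick together with the symmetry of the distance function.
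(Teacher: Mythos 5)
Your proposal is correct and, modulo a reshuffling of the implication cycle, follows essentially the same route as the paper: the paper proves (c)$\Rightarrow$(b)$\Rightarrow$(a)$\Rightarrow$(d)$\Leftrightarrow$(e) as immediate from Lemmas~\ref{subset} and~\ref{d=d-1+1}, and then supplies the two substantive arguments (a)$\Rightarrow$(c) and (d)$\Rightarrow$(a), both by induction on the distance. Your (a)$\Rightarrow$(c) is the same reversal-plus-induction argument the paper uses; your closing step (e)$\Rightarrow$(a) phrases the descent in terms of nested inclusions $F_i\cap F_j\subseteq H_{s+1}\subseteq F_i\cup F_j$ rather than the paper's purely distance-based triangle-inequality computation, but by Lemma~\ref{subset} these are the same thing, so the arguments are interchangeable.
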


\begin{proof}
    The implications \ref{dist-1-d} $\Leftrightarrow$ \ref{dist-1-e}
    follows from Lemma \ref{subset}.
    The implications \ref{dist-1-d} $\Leftarrow$ \ref{dist-1-a} $\Leftarrow$ \ref{dist-1-b} $\Leftarrow$ \ref{dist-1-c} are clear from Lemma \ref{d=d-1+1}.

    \begin{enumerate}[i]
        \item The implication \ref{dist-1-a} $\Rightarrow$ \ref{dist-1-c} is straightforward as follows. Assume that $F\succ G$. If $t=\dis(F,G)=1$, this is clear. Thus, we may assume that $t\ge 2$.
            Write $F=H_0$ and $G=H_t$.
            Therefore, there exists $H_{t-1}\succ H_t$ with $\dis(H_{t-1},H_t)=1$ and $\dis(H_0,H_{t-1})=t-1$.

            If $H_0\succ H_{t-1}$, by induction, we can find $H_0,H_1,\dots,H_{t-1}$ on the left side of $H_{t-1}$ such that $\dis(H_{l_1},H_{l_2})=l_2-l_1$ for $0\le l_1\le l_2\le t-1$. Now take $0\le l \le t-1$ and check $\dis(H_l,H_t)$. Obviously, $\dis(H_l,H_t)\le \dis(H_l,H_{t-1})+\dis(H_{t-1},H_t)=(t-1-l)+1=t-l$. On the other hand, $\dis(H_l,H_t)\ge \dis(H_0,H_t)-\dis(H_0,H_l)=t-l$. Thus, $\dis(H_l,H_t)=t-l$, as expected.

            The case when $H_{t-1}\succ H_{0}$ is similar. By induction, we will find a chain of facets $H_{t-1}=H_0',H_1',\dots,H_{t-1}'=H_0$ on the left of $H_0$ with $\dis(H_{l_1}',H_{l_2}')=l_2-l_1$ for $0\le l_1\le l_2 \le t-1$. We will take $H_{l}=H_{t-1-l}'$ for $1\le l\le t-2$. Notice that $H_0=H_{t-1}'$ and $H_{t-1}=H_0'$ already. The remaining argument is similar to that of the above.

        \item As a final step, we show \ref{dist-1-d} $\Rightarrow$ \ref{dist-1-a}, which is also easy. Take arbitrary $F\succ G$.
            We show that there exists some $L$ with $L\succ G$, $\dis(L,G)=1$ and $\dis(F,L)=\dis(F,G)-1$. To show this,
            we may assume that $t=\dis(F,G)\ge 2$. Therefore, there exists some $H\succ G$ with $F\ne H$ and $\dis(F,H)+\dis(H,G)=t$. As $1\le \dis(H,G)\le t-1$, by induction, we can find $L\succ G$ such that $\dis(L,G)=1$ and $\dis(H,L)=\dis(H,G)-1$. Now, $\dis(F,L)\le \dis(F,H)+\dis(H,L)=(t-\dis(H,G))+(\dis(H,G)-1)=t-1$. On the other hand, $\dis(F,L)\ge \dis(F,G)-\dis(L,G)=t-1$. Therefore, $\dis(F,L)=t-1$, as expected. \qedhere
    \end{enumerate}
\end{proof}

In the following, we will characterize the strong shellability of a pure complex by investigating its codimension one graph.

\begin{definition}
    For a given finite simple graph $G$, we can delete a vertex $v_1$ to get a subgraph $G_1=G\setminus v_1$. If for each pair of vertices $u,\,v$ in $G_1$, $\dis_{G_1}(u, v)=\dis_{G}(u, v)$, the graph $G_1$ is called \Index{distance-preserving} with respect to $G$. More generally, if we can order the vertex set $\calV(G)=\Set{v_1,v_2,\dots,v_{t}}$, such that the induced subgraphs $G_k=G|_{\{v_{k+1},v_{k+2},\dots,v_{t}\}}$ satisfy the requirement that
    $G_i$  preserves distance with respect to $G_{i-1}$ for each $1 \leq i < t-1$, then we say that the graph $G$ has a \Index{distance-preserving order}: $v_1, v_2, \cdots, v_{t}$.
\end{definition}

\begin{example}
    Let $\Delta$ be a pure complex with the facet set
    \[
    \Set{\{1, 2, 3, 4\}, \{2, 3, 4, 5\}, \{3, 4, 5, 6\}, \{4, 5, 6, 7\}, \{1, 4, 6, 7\}, \{1, 2, 4, 7\}}.
    \]
    The codimension one graph $\Gamma(\Delta)$ is a cycle with 6 vertices, as Figure \ref{Fig:circle} shows.
    It is easy to check that $\Gamma(\Delta)$ is harmonious  
    but does not have any distance-preserving  order.   
\end{example}

\begin{figure}[!ht]
    \begin{minipage}[h]{\linewidth} \centering
        \begin{minipage}[h]{\linewidth} \centering
            \includegraphics{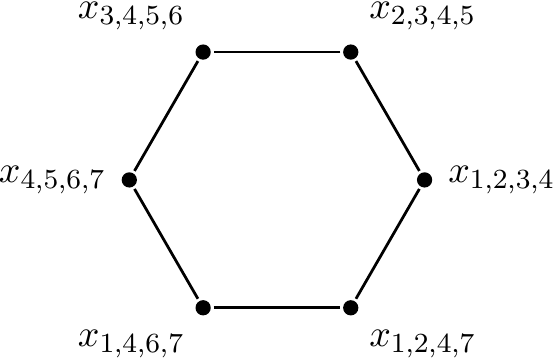}
        \end{minipage}
        \caption{$\Gamma(\Delta)$}
        \label{Fig:circle}
    \end{minipage}
\end{figure}

As $\dis_\Delta$ is indeed a distance metric on $\calF(\Delta)$ in pure case, we have the following lemma.

\begin{lemma}
    \label{pure distance of complex less than graph}
    Let $\Delta$ be a pure complex. Then $\dis_{\Delta}(F, G) \leq \dis_{\Gamma(\Delta)}(F, G)$ for each pair of facets $F, G \in \Delta$.
\end{lemma}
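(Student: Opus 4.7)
The plan is to use the triangle inequality for $\dis_\Delta$, which (as noted at the start of this section) holds on $\calF(\Delta)$ in the pure case. First, I would dispense with the trivial case: if $F$ and $G$ lie in different connected components of $\Gamma(\Delta)$, then $\dis_{\Gamma(\Delta)}(F,G)=+\infty$ under the standard convention, and the inequality holds vacuously.

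Next, assume $F$ and $G$ lie in the same component of $\Gamma(\Delta)$ and set $t=\dis_{\Gamma(\Delta)}(F,G)$. Choose a shortest path $F=F_0,F_1,\dots,F_t=G$ in $\Gamma(\Delta)$. By the definition of the codimension one graph, each consecutive pair satisfies $\dis_\Delta(F_{i-1},F_i)=1$. Applying the triangle inequality $t$ times gives
\[
\dis_\Delta(F,G)\;\leq\;\sum_{i=1}^{t}\dis_\Delta(F_{i-1},F_i)\;=\;t\;=\;\dis_{\Gamma(\Delta)}(F,G),
\]
which is the desired inequality.

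There is essentially no obstacle here; the only subtlety is that this argument genuinely requires $\Delta$ to be pure, since otherwise $\dis_\Delta$ need not satisfy the triangle inequality (as pointed out just after Definition \ref{distance}). Everything else is a direct unwinding of the definitions, so I would keep the write-up to a few lines.
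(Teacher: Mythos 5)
Your proof is correct and matches the paper's argument: take a shortest path in $\Gamma(\Delta)$, note each edge gives $\dis_\Delta = 1$, and conclude by the triangle inequality for $\dis_\Delta$ in the pure case. The only difference is that you explicitly handle the disconnected case, which the paper leaves implicit.
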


\begin{proof}
    Assume that $\dis_{\Gamma(\Delta)}(F, G)=t$. If $F=F_0,F_1,\dots,F_t =G$ is a path of length $t$ in $\Gamma(\Delta)$, then,  for each $1 \leq i \leq t$, $|F_{i-1} \cap F_i| =d-1$, i.e., $\dis_\Delta(F_{i-1},F_i)=1$. By the triangle inequality, $\dis_\Delta(F,G)\le t$.
\end{proof}

The following is the first main result of this section.

\begin{theorem}
    \label{strongly shellable=keeping distance order}
    A pure complex $\Delta$ is strongly shellable if and only if its codimension one graph $\Gamma(\Delta)$ is harmonious and has a distance-preserving order.
\end{theorem}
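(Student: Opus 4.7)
The plan is to use the existing distance-comparison lemmas together with the characterization of strong shellability given in Proposition~\ref{distance 1 chain}.

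For the ``only if'' direction, I would fix a strong shelling order $\succ: F_1, F_2, \dots, F_t$ on $\mathcal{F}(\Delta)$. Harmoniousness of $\Gamma(\Delta)$ follows immediately by combining Lemma~\ref{distance of graph less than complex} with Lemma~\ref{pure distance of complex less than graph}, which sandwich the two distance functions. To produce a distance-preserving vertex order on $\Gamma(\Delta)$, I would simply reverse the shelling order, setting $v_k = F_{t+1-k}$. For any $2 \leq j \leq t$, the induced subgraph $\Gamma(\Delta)|_{\{F_1,\dots,F_j\}}$ coincides with the codimension one graph $\Gamma(\braket{F_1,\dots,F_j})$, because the distance $\dis$ between a pair of facets depends only on their intersection and not on the ambient complex. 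By Lemma~\ref{lex and o are sso}, the restricted order is a strong shelling order of $\braket{F_1,\dots,F_j}$, which is again pure of the same dimension. Hence, by the harmoniousness already established applied to this subcomplex, every pair of vertices in the induced subgraph has graph distance there equal to its $\dis$ value, which in turn equals its distance in $\Gamma(\Delta)$. So the successive deletions preserve distances.

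For the ``if'' direction, let $v_1,\dots,v_t$ be a distance-preserving order on $\Gamma(\Delta)$, and set $F_k = v_{t+1-k}$. I would verify that $\succ: F_1,\dots,F_t$ satisfies condition (d) of Proposition~\ref{distance 1 chain}, from which strong shellability of $\Delta$ follows. Take any $F_i \succ F_j$ with $s \coloneqq \dis_\Delta(F_i,F_j) \geq 2$. Since the chain of deletions preserves pairwise distances, the induced subgraph $\Gamma(\Delta)|_{\{F_1,\dots,F_j\}}$ has the same $F_i$-to-$F_j$ distance as $\Gamma(\Delta)$, which by harmoniousness equals $s$. Choose a geodesic $F_i = H_0, H_1, \dots, H_s = F_j$ in this induced subgraph and let $H = H_{s-1}$. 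Then $H \in \{F_1,\dots,F_{j-1}\}$, so $H \succ F_j$; $H \neq F_i$ because $s \geq 2$; $\dis_\Delta(H,F_j) = 1$; and the triangle inequality (valid in the pure case by Lemma~\ref{subset}) together with $\dis_\Delta(F_i,F_j) = s$ forces $\dis_\Delta(F_i,H) = s - 1$. Hence $\dis_\Delta(F_i,H) + \dis_\Delta(H,F_j) = \dis_\Delta(F_i,F_j)$, which is exactly condition (d).

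The main technical points to justify carefully are the identification of $\Gamma(\Delta)|_{\{F_1,\dots,F_k\}}$ with $\Gamma(\braket{F_1,\dots,F_k})$ in the forward direction, and the verification that the penultimate vertex $H$ on the chosen geodesic strictly precedes $F_j$ and differs from $F_i$ in the reverse direction. Both reduce to the facts that $\dis$ depends only on facet intersections and that a distance-preserving order preserves distances among all remaining vertices; harmoniousness forces all $\dis_\Delta$-values to be realized as finite distances in $\Gamma(\Delta)$, so no degenerate $\infty$-distance case arises.
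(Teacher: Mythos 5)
Your proposal is correct and follows essentially the same route as the paper's proof: the sandwiching lemmas give harmoniousness, reversing a strong shelling order gives a distance-preserving order (via the identification $\Gamma(\Delta)|_{\{F_1,\dots,F_j\}}=\Gamma(\braket{F_1,\dots,F_j})$ and Lemma~\ref{lex and o are sso}), and conversely a geodesic in the induced subgraph produces the required facet $H$ at distance $1$ from $F_j$ and distance $\dis(F_i,F_j)-1$ from $F_i$. The paper concludes by citing Lemma~\ref{d=d-1+1} where you cite Proposition~\ref{distance 1 chain}(d), but since you actually exhibit $\dis(H,F_j)=1$ and $\dis(F_i,H)=\dis(F_i,F_j)-1$, the two framings are interchangeable.
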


\begin{proof}
    For the necessity part: If $\Delta$ is strongly shellable, then for each pair of facets $F, G \in \Delta$,  $\dis_{\Gamma(\Delta)}(F, G) \leq \dis_{\Delta}(F, G)$ by Lemma \ref{distance of graph less than complex}. On the other hand, by Lemma \ref{pure distance of complex less than graph}, $\dis_{\Delta}(F, G) \leq \dis_{\Gamma(\Delta)}(F, G)$. Hence $\Gamma(\Delta)$ is harmonious.

    Since $\Delta$ is strongly shellable, we can assume that $G_1, G_2, \dots, G_s$ is a strong shelling order on $\calF(\Delta)$. Note that each subcomplex $\braket{ G_1, \dots, G_k}$ for $1 \leq k \leq s$ is also strongly shellable, therefore harmonious, by the previous argument. It is clear that $G_s, G_{s-1}, \cdots, G_1$ gives a distance-preserving order on $\Gamma(\Delta)$.

    For the sufficiency part: Assume that $\Gamma(\Delta)$ has a distance-preserving order: $G_s, G_{s-1}, \dots, G_1$. Since $\Gamma(\Delta)$ is harmonious, the subcomplexes $\Delta_k\coloneqq\braket{G_1, \cdots, G_k}$ are also harmonious for $1 \leq k \leq s$. In the following, we will show that $G_1, \cdots, G_s$ is a strong shelling order on $\calF(\Delta)$. Take arbitrary facets $G_i, G_j \in \Delta$ with $i<j$ and suppose that $\dis_{\Delta}(G_i, G_j)=t$.  Let $\Gamma_j=\Gamma(\Delta_j)$.  Since $\Delta_j$ is harmonious, $\dis_{\Gamma_j}(G_i,G_j)=\dis_{\Delta_j}(G_i,G_j)=t$.  Therefore, there exists a path of length $t$ in $\Gamma_j$:  $G_i=G_{k_0},G_{k_1}, \dots,G_{k_t}=G_j$, with all $k_l\le j$. Obviously, $\dis_{\Delta}(G_{k_{t-1}}, G_j) = \dis_{\Gamma_j}(G_{k_{t-1}}, G_j)=1$. Note that $ \dis_{\Delta}(G_{k_{t-1}}, G_i)\geq  \dis_\Delta(G_i,G_j)-\dis_{\Delta}(G_j,G_{k_{t-1}})=t-1$. On the other hand, $\dis_{\Gamma_j}(G_{k_{t-1}}, G_i) \leq t-1$ by the existence of the previous path connecting $G_i$ and $G_j$. Hence, by Lemma \ref{pure distance of complex less than graph}, we have $\dis_{\Delta}(G_{k_{t-1}}, G_i)=\dis_{\Gamma_j}(G_{k_{t-1}}, G_i) = t-1$. It follows from Lemma \ref{d=d-1+1} that $G_1, \dots, G_s$ is a strong shelling order of $\Delta$.
\end{proof}

Recall that a cycle in a finite simple graph $G$ is called \Index{minimal} if there is no chord in the cycle. And the \Index{girth} of $G$, denoted by $\g(G)$, is the length of a shortest cycle contained in the graph.
If a graph has no cycle, then its girth  is assumed to be 0.

\begin{lem}
    \label{girth not more than 4}
    Let $G$ be a connected simple graph. If it has a distance-preserving order, then its girth is at most $4$.
\end{lem}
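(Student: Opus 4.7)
The plan is to argue by contradiction: suppose $\g(G) \geq 5$. Since a forest has girth $0$, this forces $G$ to contain at least one cycle, so I would fix a shortest cycle $C$ in $G$, whose length is $\g(G) \geq 5$. Given a distance-preserving order $v_1, v_2, \ldots, v_t$ on $\calV(G)$, let $i$ be the smallest index with $v_i \in \calV(C)$. By the minimality of $i$, none of $v_1, \ldots, v_{i-1}$ lies on $C$, so $C$ persists as a cycle inside the subgraph $G_{i-1}$. The critical step is to scrutinize the removal of this particular vertex.

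Next, I would let $u$ and $w$ denote the two neighbors of $v_i$ on $C$. The two-step walk from $u$ to $w$ through $v_i$ shows that $\dis_{G_{i-1}}(u, w) \leq 2$. The case $\dis_{G_{i-1}}(u, w) = 1$ is immediately ruled out: an edge $uw$ would give a triangle on $\{v_i, u, w\}$ inside $G_{i-1} \subseteq G$, contradicting $\g(G) \geq 5$. Hence $\dis_{G_{i-1}}(u, w) = 2$. Applying the distance-preserving hypothesis then forces $\dis_{G_i}(u, w) = 2$ as well, which produces a vertex $x \in \calV(G_i)$ with $x \neq v_i$ and with both $ux$ and $xw$ edges of $G$. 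Since $G$ is simple, $x$ is distinct from $u$ and $w$, and so $\{v_i, u, x, w\}$ spans a $4$-cycle in $G$, contradicting $\g(G) \geq 5$ again.

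The main obstacle is really just packaging the choice of which removed vertex to inspect: by taking $v_i$ to be the first vertex of $C$ in the removal sequence we guarantee that the whole cycle $C$ still sits inside $G_{i-1}$, so any short cycle detected in $G_{i-1}$ or $G_i$ pulls back to a short cycle in $G$ itself. Once this is in place, the dichotomy $\dis_{G_{i-1}}(u, w) \in \{1, 2\}$ becomes a routine case split, yielding either a triangle or a $4$-cycle in $G$ in each case.
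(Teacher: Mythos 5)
Your proof is correct and takes essentially the same approach as the paper: identify a removal step at which a vertex $v$ still lying on a cycle of length at least $5$ is deleted, observe via the distance-preserving hypothesis that its two cycle-neighbours stay at distance $2$ after the deletion, and extract a $4$-cycle through $v$, contradicting $\g(G)\ge 5$. The only cosmetic difference is the choice of critical step — you fix a shortest cycle $C$ and take the first vertex of $C$ in the removal order so that $C$ survives into $G_{i-1}$, whereas the paper locates the least $k$ with $G_k$ acyclic and notes $v_k$ must lie on a minimal cycle of $G_{k-1}$; both selections yield the same configuration and the remainder of the argument coincides.
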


\begin{proof}
    We may assume that $G$ has at least one cycle, and  has a distance-preserving order: $v_1, v_2, \dots, v_{n+1}$. Let $G_i$ be the induced subgraph from $G$ by removing the vertices $v_1,\dots,v_i$.

    Assume for contradiction that $\g(G)\ge 5$.
    As $G_{n-1}$ has no cycle, we can find the least $k$ such that $G_k$ has no cycle. Obviously $1\le k \le n-1$. From $G_{k-1}$ to $G_k$, we removed the vertex $v_k$. Thus, $v_k$ is contained in some minimal cycle in $G_{k-1}$. Consider one such minimal cycle. Let $v'$ and $v''$ be the two vertices adjacent to $v_k$ on this cycle. As $\g(G)\ge 5$, and $G_{k-1}$ contains cycles, $\g(G_{k-1})\ge 5$. Thus, $v'$ and $v''$ are not adjacent in $G_{k-1}$ and indeed $\dis_{G_{k-1}}(v',v'')=2$. By the distance-preserving condition, $\dis_{G_k}(v',v'')=2$. This implies the existence of some vertex $u\in G_k$ which are adjacent to both $v'$ and $v''$. But $u\ne v_{k}$. Thus we have a cycle in $G_{k-1}$ consisting of the vertices $v_k, v', u$ and $v''$. This implies that  $\g(G_{k-1})\le 4$, a contradiction.
\end{proof}

By Theorem \ref{strongly shellable=keeping distance order} and Lemma \ref{girth not more than 4}, we have the following result.

\begin{proposition}
    \label{girth diameter}
    If $\Delta$ is a pure strongly shellable complex,
    then its codimension one graph $\Gamma(\Delta)$ is connected, and $\g(\Gamma(\Delta)) \leq 4$, $\diam(\Gamma(\Delta)) \leq \dim(\Delta)+1$.
\end{proposition}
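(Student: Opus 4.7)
The proof is a direct synthesis of the structural results already established in this section, so the plan is short and the main work has essentially been done in Theorem \ref{strongly shellable=keeping distance order} and Lemma \ref{girth not more than 4}.

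First, I would handle connectedness and the girth bound together, since both come for free from Theorem \ref{strongly shellable=keeping distance order}. Because $\Delta$ is pure strongly shellable, that theorem gives that $\Gamma(\Delta)$ is harmonious \emph{and} admits a distance-preserving order $G_s, G_{s-1}, \dots, G_1$. Connectedness of $\Gamma(\Delta)$ follows at once (one can also invoke Lemma \ref{distance of graph less than complex} directly), and then Lemma \ref{girth not more than 4} applied to $\Gamma(\Delta)$ yields $\g(\Gamma(\Delta)) \le 4$.

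For the diameter bound, I would argue as follows. Set $d = \dim(\Delta)$, so every facet of $\Delta$ has cardinality $d+1$. For any two facets $F, G \in \calF(\Delta)$,
\[
\dis_\Delta(F, G) = \min(|F\setminus G|, |G\setminus F|) \le d+1,
\]
since $F$ and $G$ each have only $d+1$ elements. The harmonious property from Theorem \ref{strongly shellable=keeping distance order} then gives
\[
\dis_{\Gamma(\Delta)}(F, G) = \dis_\Delta(F, G) \le d+1 = \dim(\Delta)+1,
\]
and taking the maximum over all pairs $F,G$ yields $\diam(\Gamma(\Delta)) \le \dim(\Delta)+1$.

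There is no real obstacle here; the proposition is essentially a packaging of Theorem \ref{strongly shellable=keeping distance order} with Lemma \ref{girth not more than 4} plus the trivial cardinality bound on $\dis_\Delta$ in the pure case. The only point requiring a moment's care is recognizing that harmoniousness is exactly what allows one to transfer the cheap upper bound $\dis_\Delta(F,G) \le d+1$ to the graph distance $\dis_{\Gamma(\Delta)}(F,G)$; without harmoniousness, one would only have the inequality $\dis_\Delta(F,G) \le \dis_{\Gamma(\Delta)}(F,G)$ from Lemma \ref{pure distance of complex less than graph}, which goes the wrong way.
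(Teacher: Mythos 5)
Your proof is correct and follows exactly the route the paper intends: the paper simply states that Proposition~\ref{girth diameter} follows from Theorem~\ref{strongly shellable=keeping distance order} and Lemma~\ref{girth not more than 4}, and your write-up fills in the same two citations plus the elementary cardinality bound $\dis_\Delta(F,G)\le d+1$ transferred to $\Gamma(\Delta)$ via harmoniousness. No gaps or deviations.
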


We will wrap up this section with a very important property of pure strongly shellable complexes.
We have already mentioned that any pure strongly shellable complex $\Delta$ (say, over the vertex set $[n]$) is Cohen-Macaulay, i.e., the Stanley-Reisner ring $\KK[\Delta]= \KK[x_1, \ldots, x_n]/I_{\Delta}$ is a Cohen-Macaulay ring over arbitrary field $\KK$.  This is an important property related to the Stanley-Reisner ideal $I_\Delta$ of $\Delta$. In the following, we will consider the \Index{facet} ideal $I(\Delta)\coloneqq\braket{\bdx^F\mid F\in \calF(\Delta)}\subseteq \KK[x_1,\dots,x_n]$.  Here, for any $F\subseteq [n]$, we write $\bdx^F\coloneqq\prod_{i\in F}x_i$.
Notice that for any
given simplicial complex $\Delta$, its \Index{complement complex} $\Delta^c$ has the facet set $\calF(\Delta^c)=\Set{F^c: F\in \calF(\Delta)}$, where $F^c\coloneqq\calV(\Delta)\setminus F$. The following observation is important.

\begin{lemma}
    \label{complement shellable}
    A pure complex $\Delta$ is strongly shellable if and only if its complement complex $\Delta^{c}$ has the same property.
\end{lemma}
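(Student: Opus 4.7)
The plan is to exploit the set-complementation duality between $\calF(\Delta)$ and $\calF(\Delta^{c})$ and show that the defining conditions of strong shellability are preserved under it. Since $(\Delta^{c})^{c}=\Delta$, it suffices to prove one direction of the equivalence. Write $V=\calV(\Delta)=[n]$ and suppose $\Delta$ is pure of dimension $d-1$, so that every facet of $\Delta$ has size $d$ and every facet of $\Delta^{c}$ has size $n-d$; in particular $\Delta^{c}$ is pure, and the map $F\mapsto F^{c}:=V\setminus F$ is a bijection $\calF(\Delta)\to\calF(\Delta^{c})$.

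My claim is that if $F_{1},F_{2},\dots,F_{t}$ is a strong shelling order of $\calF(\Delta)$, then $F_{1}^{c},F_{2}^{c},\dots,F_{t}^{c}$ is a strong shelling order of $\calF(\Delta^{c})$, with the same index $k$ serving as a witness for each pair $i<j$. I would verify this by translating the three conditions of Remark \ref{1/2/3 equivalent discription} through complementation. For facets $F_{i},F_{j},F_{k}$ of $\Delta$, the De Morgan laws give $F_{i}\cap F_{j}\subseteq F_{k}$ if and only if $F_{k}^{c}\subseteq F_{i}^{c}\cup F_{j}^{c}$, and $F_{k}\subseteq F_{i}\cup F_{j}$ if and only if $F_{i}^{c}\cap F_{j}^{c}\subseteq F_{k}^{c}$. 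For the dimension condition, I would invoke purity: $\dim(F_{j})=\dim(F_{j}\cap F_{k})+1$ says $|F_{j}\setminus F_{k}|=1$, which because $|F_{j}|=|F_{k}|$ equals $|F_{k}\setminus F_{j}|$, and this in turn equals $|F_{j}^{c}\setminus F_{k}^{c}|$; together with purity of $\Delta^{c}$, this is equivalent to $\dim(F_{j}^{c})=\dim(F_{j}^{c}\cap F_{k}^{c})+1$. Hence each witness $k$ provided by the shelling of $\Delta$ produces, via $F_{k}^{c}$, the witness needed for the proposed shelling of $\Delta^{c}$.

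The one subtle point, and the reason purity is indispensable in this lemma, is the symmetry of $|F_{j}\setminus F_{k}|=1$ under swapping $F_{j}$ and $F_{k}$. Without purity, complementation can reverse the inclusion relation between $F_{j}$ and $F_{k}$ and destroy the cardinality-one condition. I expect this to be the only step requiring care; once the complementation bijection is set up, the required witness in $\Delta^{c}$ is essentially inherited from that of $\Delta$, and the converse direction follows by applying the same argument to $\Delta^{c}$.
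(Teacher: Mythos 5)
Your proof is correct and takes essentially the same route as the paper: both rest on the observation that for a pure complex, $|F_j\setminus F_k|=|F_k\setminus F_j|=|F_j^c\setminus F_k^c|$, so the witness $F_k$ passes to $F_k^c$ under complementation. The paper phrases this via $\dis_{\Delta^c}(A^c,B^c)=\dis_\Delta(A,B)$ together with Lemma~\ref{d=d-1+1}, while you unpack the same computation directly from Remark~\ref{1/2/3 equivalent discription}.
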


\begin{proof}
    Note that  $\Delta^c$ is also a pure complex with  $(\Delta^{c})^{c}=\Delta$. Furthermore, for each pair of facets $A, B$ of $\Delta$, $\dis_{\Delta^{c}}(A^{c}, B^{c})=\dis_{\Delta}(A, B)$.  Thus, the result follows directly from Lemma \ref{d=d-1+1}.
\end{proof}

Let $S=\KK[x_1,\dots,x_n]$ be a polynomial ring over a field $\KK$ and $I$ a graded proper ideal.  Recall that $I$ has \Index{linear quotients}, if there exists a system of homogeneous generators $f_1 ,f_2 ,\dots,f_m$ of $I$ such that the colon ideal $\braket{f_1 ,\dots,f_{i-1}} : f_i$ is generated by linear forms for all $i$. If $I$ has linear quotients, then $I$ is componentwise linear; see \cite[Theorem 8.2.15]{MR2724673}. In particular, if $I$ has linear quotients and can be generated by forms of degree $d$, then it has a $d$-linear resolution; see \cite[Proposition 8.2.1]{MR2724673}.

On the other hand, recall that the \Index{Alexander dual} of $\Delta$ (with respect to the vertex set $\calV(\Delta)$), denoted by $\Delta^{\vee}$, is the complex $\Delta^{\vee}\coloneqq\Set{F^c \mid F \notin \Delta,\, F\subseteq [n]}$.  Note that $I_{\Delta^{\vee}} = I(\Delta^{c})$ by \cite[Lemma 1.5.3]{MR2724673}. Thus,  we have the second main result of this section.

\begin{theorem}
    \label{linear quotients}
    If $\Delta$ is a pure strongly shellable complex, then the facet ideal $I(\Delta)$ has linear quotients.
\end{theorem}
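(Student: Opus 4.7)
The plan is to show directly that any strong shelling order $F_1, \ldots, F_m$ on $\calF(\Delta)$ is itself a linear-quotients ordering of the generators $\mathbf{x}^{F_1}, \ldots, \mathbf{x}^{F_m}$ of $I(\Delta)$. Since $\Delta$ is pure of dimension $d$, every generator $\mathbf{x}^{F_k}$ is squarefree of degree $d+1$, and for each $j\ge 2$ the colon ideal
\[
J_j \coloneqq \langle \mathbf{x}^{F_1},\ldots,\mathbf{x}^{F_{j-1}}\rangle : \mathbf{x}^{F_j}
\]
is the monomial ideal generated by $\{\mathbf{x}^{F_i\setminus F_j}: 1\le i < j\}$. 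The task therefore reduces to exhibiting, for each $i<j$, a \emph{variable} lying in $J_j$ that divides $\mathbf{x}^{F_i\setminus F_j}$.

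Fix $i<j$ and apply Definition \ref{nonpure strongly shellable} to the pair $(i,j)$ to produce some $k<j$ satisfying $|F_j\setminus F_k|=1$, $F_j\setminus F_k\subseteq F_j\setminus F_i$, and $F_k\setminus F_j\subseteq F_i$. Purity is the decisive extra ingredient: since $|F_k|=|F_j|$, condition \ref{nss-1} upgrades to $|F_k\setminus F_j|=1$ as well. Writing $F_k\setminus F_j=\{u\}$, condition \ref{nss-3} gives $u\in F_i$, and since $u\notin F_j$ we have $u\in F_i\setminus F_j$. Then
\[
\langle \mathbf{x}^{F_k}\rangle : \mathbf{x}^{F_j} \;=\; \langle \mathbf{x}^{F_k\setminus F_j}\rangle \;=\; \langle x_u\rangle \;\subseteq\; J_j,
\]
and $x_u$ divides $\mathbf{x}^{F_i\setminus F_j}$, as required.

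I do not anticipate a real obstacle: the argument is essentially a monomial-language translation of Definition \ref{nonpure strongly shellable}, with the purity-driven symmetry $|F_j\setminus F_k|=|F_k\setminus F_j|=1$ doing all the work. This also clarifies why strong shellability, rather than ordinary shellability, is needed here: condition \ref{nss-3}---the clause absent from the Bj\"orner--Wachs definition---is precisely what supplies an element $u\in F_i\setminus F_j$ from a codimension one neighbour $F_k$ of $F_j$ already on the list. An alternative packaging would combine Lemma \ref{complement shellable} with the identity $I_{\Delta^\vee}=I(\Delta^c)$ applied to $\Delta^c$ in place of $\Delta$, together with the classical correspondence between shellability of a complex and linear quotients of the Stanley--Reisner ideal of its Alexander dual; the direct route above, however, has the advantage of exhibiting the specific linear form $x_u$ at each step.
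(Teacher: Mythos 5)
Your proof is correct, but it takes a genuinely different route from the paper's. The paper argues through Alexander duality: by Lemma~\ref{complement shellable}, strong shellability of $\Delta$ yields (strong) shellability of $\Delta^c$; the classical equivalence of \cite[Proposition 8.2.5]{MR2724673} then gives linear quotients for $I_{(\Delta^c)^\vee}$, which equals $I(\Delta)$. You instead prove the statement head-on, showing that the strong shelling order is itself a linear-quotients order: for each $i<j$, the $F_k$ supplied by Definition~\ref{nonpure strongly shellable} has $|F_k\setminus F_j|=1$ by purity, and conditions \ref{nss-1}--\ref{nss-3} force the resulting variable $x_u$ to lie in the colon ideal $J_j$ and to divide $\mathbf{x}^{F_i\setminus F_j}$. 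Your computation of $J_j$ and the reduction to finding a variable generator dividing each $\mathbf{x}^{F_i\setminus F_j}$ are both correct, and the observation that purity is what upgrades $|F_j\setminus F_k|=1$ to $|F_k\setminus F_j|=1$ is exactly the right point. The trade-off is the expected one: the paper's proof is shorter and reveals the structural fact that the result is dual to $\Delta^c$ being shellable, while yours is more elementary, avoids invoking the shellability-linear-quotients dictionary, and exhibits the actual linear forms at each step --- you even flagged the paper's route as the ``alternative packaging'' in your last paragraph, so you clearly saw both.
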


\begin{proof}
    It is well-known that $\Delta$ is shellable if and only if $I_{\Delta^{\vee}}$ has linear quotients; see \cite[Proposition 8.2.5]{MR2724673}. Since $\Delta$ is strongly shellable, $\Delta^{c}$ is shellable by Lemma \ref{complement shellable}. Hence $I_{(\Delta^{c})^{\vee}}$ has linear quotients. Note that  $I_{(\Delta^{c})^{\vee}} = I(\Delta)$. This completes the proof.
\end{proof}

This important property will be vital for the sequel paper \cite{ESSC}, where we deal with chordal (hyper)-graphs.

\section{Strongly shellable posets}
Recall that a poset $P$ is called \Index{bounded} if it has a top element $\hat{1}$ and a bottom element $\hat{0}$. If $P$ is bounded, let $\bar{P}=P\setminus\Set{\hat{0},\hat{1}}$. Conversely, for any poset $P$, let $\widehat{P}=P\cup \Set{\hat{0},\hat{1}}$ where $\hat{0}$ and $\hat{1}$ are new elements adjoined so that $\hat{0}<x<\hat{1}$ for all $x\in P$.  A finite poset is said to be \Index{pure} if all maximal chains have the same length. And a poset is called \Index{graded} if it is finite, bounded and pure.

Given a finite poset $P$, its \Index{order complex} $\Delta(P)$ is the simplicial complex whose $k$-dimensional faces are the chains $x_0<x_1<\cdots<x_k$ of $P$.  A finite pure poset $P$ will be called \Index{(strongly) shellable} if its order complex $\Delta(P)$ is so. Note that a finite poset $P$ is (strongly) shellable if and only if $\widehat{P}$ is so.

Let $P$ be a finite pure poset of length $r-1$. Then $\widehat{P}$ is graded with a well-defined \Index{rank function} $\rho$, where $\rho(x)$ is defined to be
the common length of all unrefinable chains from  $\hat{0}$  to $x$ in $\widehat{P}$.  Obviously, $\rho(x) \in [r]$ for all $x \in P$. For any subset $S \subseteq [r]$, we define the \Index{rank-selected subposet} $P_S$ by  $\{x\in P \mid \rho(x)\in S\}$. Like \cite[Theorem 4.1]{MR570784}, we have

\begin{proposition}
    \label{Rank-Selection}
    If $P$ is a pure strongly shellable poset of length $r - 1$, then $P_S$ is strongly shellable for all $S\subseteq [r]$.
\end{proposition}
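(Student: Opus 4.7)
The plan is to apply Proposition~\ref{nonpure restriction} to the order complex $\Delta(P)$ with the vertex subset
\[
T \coloneqq \{x\in P : \rho(x)\in S\}\subseteq \calV(\Delta(P))=P.
\]
Under the obvious identification one has $\Delta(P)[T]=\Delta(P_S)$, so once the hypothesis of that proposition is verified, the strong shellability of $\Delta(P_S)$ --- and hence of the poset $P_S$ itself --- will follow immediately from the assumed strong shellability of $\Delta(P)$.

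The single point to check is that every facet of $\Delta(P)$ restricts to a facet of $\Delta(P_S)$. First I would observe that $P_S$ is pure of length $|S|-1$. Since $\widehat{P}$ is graded, every closed interval $[x,y]\subseteq\widehat{P}$ is itself graded of length $\rho(y)-\rho(x)$, so an element of any intermediate rank can always be inserted between $x$ and $y$. Consequently, any chain in $P_S$ that omits some rank $i\in S$ can be extended inside $P_S$ by an element of rank $i$, and hence every maximal chain of $P_S$ contains exactly one element at each rank in $S$, giving $|S|$ elements in all. A facet $\sigma$ of $\Delta(P)$, being a maximal chain of $P$, has exactly one element at each rank in $[r]$; therefore $\sigma\cap T$ is a chain of $P_S$ with $|S|$ elements and is thus a maximal chain of $P_S$, i.e., a facet of $\Delta(P_S)$.

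With this verification in place, Proposition~\ref{nonpure restriction} applies and yields that any strong shelling order on $\calF(\Delta(P))$ induces a strong shelling order on $\calF(\Delta(P_S))$. Combined with the purity established above, this is precisely the statement that $P_S$ is strongly shellable. There is no substantive obstacle in the argument: the graded structure of $\widehat{P}$ makes both the purity of $P_S$ and the maximality of each $\sigma\cap T$ essentially automatic, and the remainder is a direct invocation of the nonpure restriction result from Section~2.
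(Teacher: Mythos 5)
Your proof is correct and follows the same route as the paper's: both reduce the claim to Proposition \ref{nonpure restriction} via the identification $\Delta(P_S)=\Delta(P)[P_S]$, with the only task being to check that every maximal chain of $P$ restricts to a maximal chain of $P_S$. You have simply supplied the details behind the paper's one-word ``obviously,'' using the gradedness of $\widehat{P}$ to see that maximal chains of $P_S$ hit every rank in $S$.
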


\begin{proof}
    The order complex $\Delta(P_S)=\Delta({P})[P_S]$. Obviously, if $\sigma$ is a maximal simplex in $\Delta({P})$, then $\sigma [P_S]$ is a maximal simplex in $\Delta(P_S)$. Now, we may apply Proposition \ref{nonpure restriction}.
\end{proof}

The proof for \cite[Proposition 4.2]{MR570784} also works for the following result:

\begin{proposition}
    If $P$ is a pure strongly shellable poset, then all intervals of $P$ are strongly shellable.
\end{proposition}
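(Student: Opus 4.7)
The plan is to reduce strong shellability of the closed interval $[x,y]$ to that of the open interval $(x,y) \coloneqq \{z \in P : x < z < y\}$, and then to extract the latter as a join factor of a suitable link inside $\Delta(P)$. If $x = y$, the assertion is trivial, while if $y$ covers $x$ then $\Delta([x,y])$ is a $1$-simplex; so I may assume $(x,y) \neq \varnothing$.

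The first step is to observe the join decomposition
\[
\Delta([x,y]) = \braket{\{x,y\}} * \Delta((x,y)),
\]
where $\braket{\{x,y\}}$ denotes the $1$-simplex on the vertex set $\{x,y\}$. This holds because every chain in $[x,y]$ splits uniquely as a (possibly empty) subset of $\{x,y\}$ joined with a chain of $(x,y)$, and any such pairing is itself a chain. By Proposition \ref{nonpure join}, the join is strongly shellable if and only if each factor is; so it suffices to prove that $\Delta((x,y))$ is strongly shellable.

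For this, I would exploit that $\{x,y\}$ is itself a chain of $P$ and therefore a face of $\Delta(P)$. By Proposition \ref{link ss}, the link $\link_{\Delta(P)}(\{x,y\})$ is strongly shellable. A face of this link is a chain $A$ in $P$ disjoint from $\{x,y\}$ such that $A \cup \{x,y\}$ is still a chain; comparability with both $x$ and $y$ forces each element of $A$ to lie in exactly one of $P_{<x}$, $(x,y)$, $P_{>y}$, and the three portions are chains in their respective subposets. This yields
\[
\link_{\Delta(P)}(\{x,y\}) = \Delta(P_{<x}) * \Delta((x,y)) * \Delta(P_{>y}),
\]
and iterated application of Proposition \ref{nonpure join} isolates $\Delta((x,y))$ as strongly shellable, completing the reduction.

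I expect the only mild subtlety to be the bookkeeping around degenerate join factors when $P_{<x}$ or $P_{>y}$ is empty, so that the corresponding $\Delta(\cdot)$ has no vertices and Proposition \ref{nonpure join} does not directly apply. In those cases one simply drops the empty factor from the join and runs the argument with what remains, or passes to $\widehat{P}$ so that minimum and maximum elements are available; either way the conclusion is unchanged, and the proof is a clean composition of the link and join operations already developed in Section 2.
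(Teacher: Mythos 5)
Your proof is correct and genuinely different from the paper's. You reduce to $\Delta((x,y))$ via the decomposition $\Delta([x,y]) = \braket{\{x,y\}} * \Delta((x,y))$ and then extract $\Delta((x,y))$ as a factor of $\link_{\Delta(P)}(\{x,y\}) = \Delta(P_{<x}) * \Delta((x,y)) * \Delta(P_{>y})$, invoking Propositions \ref{link ss} and \ref{nonpure join}. The paper instead argues directly at the level of facets: it fixes unrefinable chains $\bdc$ from a minimal element up to $x$ and $\bdd$ from $y$ up to a maximal element, observes that the maximal chains of $P$ containing $\bdc\cup\bdd$ biject distance-preservingly with the maximal chains of $[x,y]$, and checks via Lemmas \ref{subset} and \ref{d=d-1+1} that the order inherited from a strong shelling order of $\Delta(P)$ remains a strong shelling order. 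In effect the paper is computing $\link_{\Delta(P)}(\bdc\cup\bdd)$, which equals $\Delta((x,y))$ on the nose precisely because $\bdc\cup\bdd$ already reaches a minimal and a maximal element; you link at the smaller chain $\{x,y\}$, which is why the extra factors $\Delta(P_{<x})$ and $\Delta(P_{>y})$ appear and must be peeled off by an iterated join argument. Your route is shorter, reuses the Section~2 machinery cleanly, and does not actually use pureness, whereas the paper's is self-contained and exhibits the strong shelling order of $[x,y]$ explicitly. Finally, your worry about degenerate join factors is unnecessary: when $P_{<x}=\varnothing$, the complex $\Delta(P_{<x})$ lives on the empty vertex set and has $\varnothing$ as its unique facet, so it is vacuously strongly shellable and acts as the identity for $*$; Proposition \ref{nonpure join} applies without any special handling.
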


\begin{proof}
    The proof is standard. Assume that $P$ is strongly shellable and that $[x, y]$ is an interval of $P$. Let $\bdc:
    x_1 < x_2 < \cdots < x_g = x$ and $\bdd: y = y_1 < y_2 < \cdots < y_h$ be two unrefinable chains in $P$ such that $x_1$ is a minimal element and $y_h$ is a maximal element. Let $\bdm_1, \bdm_2,\dots, \bdm_t$ be the maximal chains in $P$ which contain $\bdc \cup \bdd$, and assume that they are listed in the order in which they appear in the strong shelling order $\succ$ of $P$.  Denote $(\bdm_i \setminus (\bdc \cup \bdd)) \cup \Set{x, y}$ by $\widetilde{\bdm}_i$.
    For each $i$ and $j$ with $1\le i< j \le t$, by the strong shellability of $\Delta(P)$, we can find $\bdm\succ \bdm_j$ with $\dis(\bdm,\bdm_j)=1$ and $\dis(\bdm,\bdm_i)=\dis(\bdm_i,\bdm_j)-1$ by Lemma \ref{d=d-1+1}.
    With the help of Lemma \ref{subset},
    we have $\bdc\cup \bdd \subseteq \bdm$. Thus, $\bdm=\bdm_k$ for some $k$. As $\bdm_k\succ \bdm_j$, we have $k<j$. Now, it is straightforward to verify that $\dis(\widetilde{\bdm}_k,\widetilde{\bdm}_j)=1$ and $\dis(\widetilde{\bdm}_k,\widetilde{\bdm}_i)=\dis(\widetilde{\bdm}_i,\widetilde{\bdm}_j)-1$.
    Therefore, $\widetilde{\bdm}_1,\cdots,\widetilde{\bdm}_t$ is a strong shelling order of the interval $[x, y]$.
\end{proof}

Let $P$ and $Q$ be two posets. The \Index{ordinal sum} $P\oplus Q$ is the poset on the disjoint union of $P$ and $Q$ defined by the rule: $x\le y$ in $P\oplus Q$ if and only if (i) $x,y\in P$ and $x\le y$ in $P$, or (ii) $x,y\in Q$ and $x\le y$ in $Q$, or (iii) $x\in P$ and $y\in Q$.  Like \cite[Theorem 4.4]{MR570784}, we have

\begin{proposition}
    \label{Ordinal-Sum}
    Let $P$ and $Q$ be two finite posets. Then the ordinal sum $P \oplus Q$ is strongly shellable if and only if both $P$ and $Q$ are strongly shellable.
\end{proposition}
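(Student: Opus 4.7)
The plan is to reduce the statement to the join result established earlier in Proposition \ref{nonpure join}. The key observation is that the order complex of an ordinal sum decomposes as a join of order complexes, so that strong shellability for posets transfers through the ordinal sum operation exactly as it does through the join operation on complexes.

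First I would verify the identification $\Delta(P \oplus Q) = \Delta(P) * \Delta(Q)$. This is essentially by inspection: any chain in $P \oplus Q$ has the form $x_1 < \cdots < x_k < y_1 < \cdots < y_\ell$ where the $x_i \in P$ and the $y_j \in Q$, because every element of $P$ is below every element of $Q$ in the ordinal sum. Hence each face of $\Delta(P\oplus Q)$ splits uniquely as the disjoint union of a face of $\Delta(P)$ (possibly empty) with a face of $\Delta(Q)$ (possibly empty); conversely, any such disjoint union is a chain in $P \oplus Q$. The degenerate cases where $P$ or $Q$ is empty are trivial, as then $P \oplus Q$ is just the nonempty factor.

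With this identification in hand, the conclusion is immediate: by the definition of strongly shellable posets, $P \oplus Q$ is strongly shellable if and only if $\Delta(P \oplus Q) = \Delta(P) * \Delta(Q)$ is strongly shellable, and by Proposition \ref{nonpure join} the latter holds if and only if both $\Delta(P)$ and $\Delta(Q)$ are strongly shellable, i.e., if and only if both $P$ and $Q$ are strongly shellable.

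There is no real obstacle; the only thing worth being careful about is making the identification $\Delta(P\oplus Q) = \Delta(P) * \Delta(Q)$ precise, since the join $\Gamma * \Delta$ was defined for complexes on disjoint vertex sets, and we need $P$ and $Q$ to have disjoint underlying element sets — this is automatic in the ordinal sum construction, which takes a disjoint union of the underlying sets. Once that is noted, the proof is a one-line appeal to Proposition \ref{nonpure join}.
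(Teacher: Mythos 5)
Your proof is correct and takes the same route as the paper: identify $\Delta(P\oplus Q)$ with the join $\Delta(P)*\Delta(Q)$ and invoke Proposition \ref{nonpure join}. The paper states the identification without elaboration; your extra care in verifying it (and checking disjointness of vertex sets) is fine but adds nothing essential.
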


\begin{proof}
    The order complexes satisfy $\Delta(P\oplus Q)=\Delta(P)*\Delta(Q)$. Thus, we can apply Proposition \ref{nonpure join}.
\end{proof}

However, other poset constructions, like direct product, cardinal power, interval poset, are easily seen to be not compatible with strong shellability.

\section{Relations with other shellable conditions}

In this section, we will show some relations among the concepts related to shellability. First, recall the following two conditions:

\begin{definition}
    \begin{enumerate}[a]
        \item A \Index{matroid complex} $\Delta$ is a simplicial complex with the exchange property: for any two distinct facets $F,G\in \calF(\Delta)$ and for any $i\in F\setminus G$, there exists some $j\in G\setminus F$ such that $(F\setminus\Set{i})\cup\Set{j}\in \Delta$.  Alternatively, $\Delta$ is called a \Index{matroid complex} if for every subset $W\subseteq \calV(\Delta)$, the induced subcomplex $\Delta_W$ is pure.  For other equivalent characterizations, see, for instance, \cite[Proposition III.3.1]{MR1453579}.
        \item A \Index{shifted} complex $\Delta$ on $[n]$ is a simplicial complex such that for each $F \in \Delta$, if $i \in F$ and $j \in [n]$ with $j > i$, then $(F \setminus \{i\}) \cup \{j\} \in \Delta$.
    \end{enumerate}
\end{definition}

\begin{example}
    Let $\Delta$ be the nonpure simplicial complex with the facet set $\calF(\Delta)$:
    \[
    \Set{ \{1, 2\}, \{1, 3, 6\}, \{1, 4, 6\}, \{1, 5, 6\}, \{2, 3, 6\}, \{2, 4, 6\}, \{2, 5, 6\}, \{3, 4, 5\}, \{3, 4, 6\}, \{3, 5, 6\}, \{4, 5, 6\} }.
    \]
    One can check directly that $\Delta$ is shifted. On the other hand, this simplicial complex is not strongly shellable; it suffices to compare the two facets $\{1,2\}$ and $\{3,4,5\}$.
\end{example}

Note that any matroid complex is pure, and as the above example shows, a nonpure complex needs not to be strongly shellable, even though it is shifted. Thus, in this section, we are mainly concerned with pure complexes.

\begin{proposition}
    \label{matroid implies ss}
    Matroid complexes are strongly shellable.
\end{proposition}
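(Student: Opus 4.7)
The plan is to show that the lexicographic order on facets (relative to an arbitrary fixed linear order on the vertex set $[n]$) is already a strong shelling order; this refines the classical fact that such orders shell matroid complexes. Since matroid complexes are pure, each facet has the same cardinality $d+1$, and we compare facets as increasing sequences of length $d+1$, with lex-smaller facets coming first. I will verify the three conditions of Definition \ref{nonpure strongly shellable} directly.

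Take two facets with $F_i$ coming before $F_j$, so $F_i$ is lex-smaller than $F_j$. Let $p$ be the first position at which the sorted sequences $F_i=(b_1,\dots,b_{d+1})$ and $F_j=(c_1,\dots,c_{d+1})$ disagree; then $b_s=c_s$ for $s<p$ and $b_p<c_p$. The crucial numerical observation is that every element of $F_j\setminus F_i$ must be at least $c_p$, while the element $x:=b_p$ lies in $F_i\setminus F_j$ and satisfies $x<c_p$.

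I then apply the exchange axiom at $x$ with respect to $F_j$: among the $y\in F_j$ for which $F_k:=(F_j\setminus\{y\})\cup\{x\}$ is a facet, I want to choose one outside $F_i$. Such a $y$ exists because the admissible $y$'s form $C(x,F_j)\setminus\{x\}$, where $C(x,F_j)$ is the fundamental circuit, i.e. the unique circuit in $F_j\cup\{x\}$; if all of its elements lay in $F_i$, the entire circuit $C(x,F_j)$ would be contained in $F_i$, contradicting the independence of $F_i$. With $y\in F_j\setminus F_i$, the inequality $y\geq c_p>x$ shows that the sorted form of $F_k$ agrees with that of $F_j$ in positions $1,\dots,p-1$ and carries $x<c_p$ at position $p$, so $F_k$ is lex-smaller than $F_j$ and therefore comes before $F_j$ in the shelling.

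Finally, the three requirements of Definition \ref{nonpure strongly shellable} read off immediately: $|F_j\setminus F_k|=|\{y\}|=1$; $F_j\setminus F_k=\{y\}\subseteq F_j\setminus F_i$; and $F_k\setminus F_j=\{x\}\subseteq F_i$. The only delicate step is ensuring that the exchange can be executed with a $y$ not already in $F_i$; the plain basis-exchange axiom is not quite enough for this, and the fundamental-circuit description is what makes the argument go through.
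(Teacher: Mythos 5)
Your argument is correct, and it takes a genuinely different route from the paper. The paper proves the result by induction on the number of vertices: it picks a vertex $n$ appearing in some but not all facets, observes that $\Delta\setminus n$ and $\link_\Delta(n)$ are again matroid complexes, concatenates their strong shelling orders (appending $n$ to the facets of the link), and uses the exchange property once to compare a facet without $n$ against a facet with $n$; the paper then notes this yields the reverse lexicographic order. You instead verify directly that the plain lexicographic order (lex-smallest first) is a strong shelling order, and the verification runs through the circuit-theoretic characterization of when $(F_j\setminus\{y\})\cup\{x\}$ remains a basis, namely $y\in C(x,F_j)\setminus\{x\}$. Your observation that the weak exchange axiom alone does not suffice and that one genuinely needs the fundamental-circuit description (to pick $y\notin F_i$) is the right diagnosis of where the work lies; the paper sidesteps this by choosing the exchange direction where the weak axiom does apply, at the price of an outer induction. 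Your proof is more self-contained and gives the explicit lex order directly; the paper's is shorter given its earlier machinery on deletions, links and joins. One small thing worth stating explicitly in a final write-up: the purity of a matroid complex is what lets you compare facets as equal-length sorted sequences, and also what makes condition~(1) of Definition~\ref{nonpure strongly shellable} reduce to a single-element exchange.
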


\begin{proof}
    Let $\Delta$ be a matroid complex defined on the set $[n]$. We will prove that $\Delta$ is strongly shellable by induction on $n$.
    Without loss of generality, we may assume that the vertex $n$ appears in some but not all facets of $\Delta$. Now,  $\Delta \setminus n$ and $\link_{\Delta}(n)$ are matroid complexes on the set $[n-1]$ with dimension $\dim(\Delta)$ and $\dim(\Delta)-1$ respectively. By inductive assumption, both $\Delta \setminus n$ and $\link_{\Delta}(n)$ are strongly shellable. Assume that $F_1, \cdots, F_s$ and $G_1, \cdots, G_t$ are strong shelling orders on $\mathcal{F}(\Delta \setminus n)$ and $\mathcal{F}(\link_{\Delta}(n))$ respectively. We claim that
    \[
    \succ: \quad F_1, \cdots, F_s, G_1 \cup \{n\}, \cdots, G_t \cup \{n\}
    \]
    is a strong shelling order on $\mathcal{F}(\Delta)$. It suffices to compare the pair $F_a\succ G_b\cup \{n\}$, where $1\le a \le s$ and $1\le b\le t$. Since $\Delta$ is a matroid complex and $n\in (G_b\cup\{n\})\setminus F_a$, we can find some $m\in F_a\setminus (G_b\cup\{n\})$ such that $( (G_b\cup\{n\})\setminus \{n\})\cup \{m \} = G_b\cup \{m\} \in \Delta$. Thus, $G_b\cup \{m\}\in \Delta\setminus n$ which means that $G_b\cup\{m\}=F_c$ for some $1\le c\le s$. Obviously, $F_c \succ G_b\cup\{n\}$.
\end{proof}

The proof of the above proposition also shows that, for a matroid complex, the reverse lexicographic order on the facet set gives a strong shelling order.

\begin{example}
    In \cite{arXiv:1311.0981}, the authors investigated the spanning tree complex of a connected graph.
    More generally, one can consider the spanning forests of a not necessarily connected graph $G$. A subset $F\subseteq \calE(G)$ is called a \Index{spanning forest} if for each connected component $L$ of $G$, $L \cap F$ is a spanning tree of $L$.
    We will write $s(G)$ for the set of spanning forests of $G$. The \Index{spanning forest complex} $\Delta_S(G)$ of $G$ will be the unique complex over $\calE(G)$, whose facet set is $s(G)$.
    It is not difficult to see that  $\Delta_S(G)$ is a matroid complex,
    which is known as the \Index{cycle matroid} of the graph $G$; see, for instance, \cite{MR2849819}.
    Indeed, if $G_1,\dots,G_s$ are the connected components of $G$, then
    \[
    \Delta_S(G)\cong \Delta_S(G_1)*\cdots * \Delta_S(G_s),
    \]
    the join of matroid complexes, hence again a matroid complex.
    In particular, by \ref{matroid implies ss}, $\Delta_S(G)$ is strongly shellable.

    To be more specific, if $|\calE(G)|=n$ and we label the edges of $G$ arbitrarily by distinct integers in $[n]$, the reverse lexicographic order on the sets of labels of the spanning forest gives rise to a strong shelling order.
    For instance, one can consider the simple graph $G$ of Figure \ref{Fig:Labeling edges-b} with the given labels. List all spanning trees by reverse lexicographic order on the labels:
    \[
    124, 134, 234, 125, 135, 235, 145, 245.
    \]
    This order is a strong shelling order on the facet set of the spanning forest complex $\Delta_S(G)$. The codimension one graph of $\Delta_S(G)$ is pictured in Figure \ref{Fig:Labeling edges-c}.
    \begin{figure}[!ht]
        \ffigbox[\FBwidth]{}{{%
        \begin{subfloatrow}[2]
            \ffigbox{\caption{A simple graph $G$ with labeled edges}}{
            \begin{minipage}[h]{\linewidth} \centering
                \includegraphics{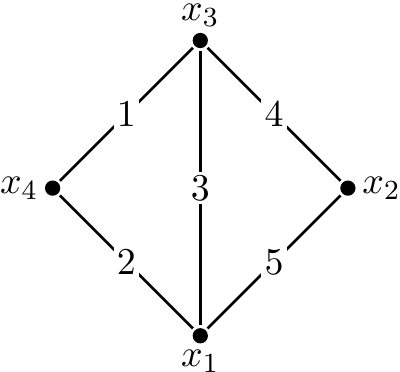}
            \end{minipage}
            \label{Fig:Labeling edges-b}
            }
            \ffigbox{\caption{$\Gamma(\Delta_S(G))$}}{
%
%
%
%
%
%
            \begin{minipage}[h]{\linewidth} \centering
                \includegraphics{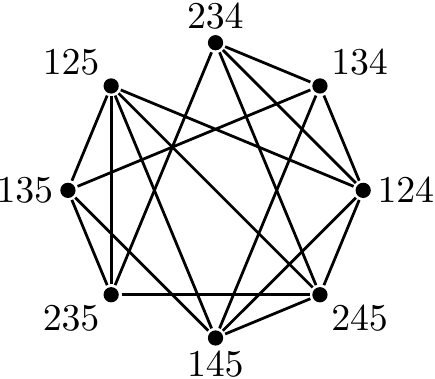}
            \end{minipage}
            \label{Fig:Labeling edges-c}
            }
        \end{subfloatrow}}
        \caption{Labels and the codimension one graph of the spanning complex} \label{Labeling edges} }
    \end{figure}
\end{example}

\begin{proposition}
    \label{strongly chain}
    Pure shifted complexes are strongly shellable.
\end{proposition}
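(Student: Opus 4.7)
The plan is to exhibit an explicit strong shelling order. Let $\Delta$ be a pure shifted complex on $[n]$ with all facets of cardinality $d+1$. Define the linear order $\succ$ on $\calF(\Delta)$ by
\[
F \succ G \quad \Llra \quad \max(F \triangle G) \in F.
\]
This is the lex order on the reversed indicator vectors of the facets, hence a total order; I would verify totality and transitivity as a quick preliminary step.

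Next, given any two facets with $F_i \succ F_j$, I will construct an admissible $F_k$ as follows. Pick any $b \in F_j \setminus F_i$; since $F_j \setminus F_i \seq F_i \triangle F_j$ and $\max(F_i \triangle F_j) \in F_i$ by the choice of order, the set $\Set{c \in F_i \setminus F_j \mid c > b}$ is nonempty, so pick any $c$ in it. Set $F_k \coloneqq (F_j \setminus \{b\}) \cup \{c\}$. The shifted property applied to $b \in F_j$ and $c > b$ (with $c \notin F_j$) gives $F_k \in \Delta$, and purity together with $|F_k| = |F_j| = d+1$ forces $F_k$ to be a facet.

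It then remains to verify four short items: (i) $|F_j \setminus F_k| = |\{b\}| = 1$; (ii) $F_j \setminus F_k = \{b\} \seq F_j \setminus F_i$ by the choice of $b$; (iii) $F_k \setminus F_j = \{c\} \seq F_i$ by the choice of $c$; and (iv) $F_k \succ F_j$, which holds because $F_k \triangle F_j = \{b, c\}$ with $c > b$ and $c \in F_k$, so $k < j$. This matches Definition \ref{nonpure strongly shellable} exactly.

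There is no real obstacle beyond selecting the right total order; the construction of $F_k$ is essentially forced once one recognizes that the shifted axiom is precisely what is needed to produce a facet intermediate between $F_i$ and $F_j$ in the sense of Remark \ref{1/2/3 equivalent discription}. The mild subtlety to flag is that $F_k$ may coincide with $F_i$ (as happens when $|F_i \setminus F_j| = |F_j \setminus F_i| = 1$), but this is permitted since Definition \ref{nonpure strongly shellable} only requires $k < j$, not $k \ne i$.
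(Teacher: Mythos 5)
Your proof is correct, and it takes a genuinely different route from the paper's. The paper argues by induction on the number of vertices: assuming vertex $1$ lies in some but not all facets, it notes that $\Delta\setminus 1$ and $\link_\Delta(1)$ are again pure shifted complexes, takes strong shelling orders on each by induction, and concatenates them (facets avoiding $1$ first, then facets containing $1$). You instead fix a single explicit total order, $F \succ G \Leftrightarrow \max(F\triangle G)\in F$, and for each pair $F_i\succ F_j$ directly manufacture the intermediate facet $F_k=(F_j\setminus\{b\})\cup\{c\}$ with one application of the shifted axiom; purity guarantees $F_k$ is a facet, and the four items of Definition~\ref{nonpure strongly shellable} are immediate. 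Note that your order is not the same as the one implicit in the paper's induction: on $2$-subsets of $[4]$, yours places $\{1,4\}$ before $\{2,3\}$, whereas the paper's recursive construction places $\{2,3\}$ before $\{1,4\}$ --- both are valid strong shelling orders. What your approach buys is a short, non-inductive, self-contained verification; what it costs is the (routine but necessary) preliminary check that $\succ$ is a total order, which you correctly flag. Your final caveat, that $F_k$ may coincide with $F_i$ when $\dis(F_i,F_j)=1$, is indeed harmless since the definition requires only $k<j$.
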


\begin{proof}
    Let $\Delta$ be a pure shifted complex on the vertex set $[n]$. We will show that $\Delta$ is strongly shellable by induction on the cardinality of the vertex set. Assume that the result holds for the case of $n-1$.
    Without loss of generality, we may assume that the vertex $1$ appears in some but not all facets of $\Delta$.
    Note that $\Delta \setminus 1$ and $\link_{\Delta}(1)$ are pure shifted complexes on the vertex set $\{2, 3, \cdots, n\}$ of dimension $\dim(\Delta)$ and $\dim(\Delta)-1$ respectively. Thus, by inductive assumption, we can find strong shelling orders $F_1, \ldots, F_p$ and  $E_1, \ldots, E_q$ for $\mathcal{F}(\Delta \setminus 1)$ and $\mathcal{F}(\link_{\Delta}(1))$ respectively. We claim that
    \[
   \succ:\quad F_1, \ldots, F_p, E_1 \cup \{1\}, \ldots, E_q \cup \{1\}
    \]
    is a strong shelling order on $\mathcal{F}(\Delta)$. In fact, for $F_i\succ E_j \cup \{1\}$ in $\mathcal{F}(\Delta)$, there exists an integer $t \in F_i \setminus (E_j \cup \{1\})$. Since $\Delta$ is shifted, $E_j \cup \{t\} \in \mathcal{F}(\Delta)$.
    Obviously, $E_j\cup \{t\}\in \calF(\Delta\setminus 1)$ which means $E_j\cup\{t\}=F_k$ for some $1\le k \le p$. Of course, $F_k\succ E_j\cup \{1\}$.
    It is also easy to see that $\dis(E_j \cup \{t\}, E_j \cup \{1\})=1$ and $\dis(F_i, E_j \cup \{t\})=\dis(F_i, E_j \cup \{1\})-1$. This completes the proof.
\end{proof}

The proof of the above proposition also shows that, for a pure shifted complex,  if $L_1, \cdots, L_s$ is a linear order on the facet set given by the lexicographic order, then $L_s, \cdots, L_1$ is a strong shelling order.

Recall that a $d$-dimensional pure complex $\Delta$ is \Index{vertex decomposable} if either $\Delta$ is a $d$-simplex, or there exists a vertex $x$ of $\Delta$ (called a \Index{shedding vertex}), so that
\begin{enumerate}[i]
    \item $\Delta \setminus x$ is pure of dimension $d$ and vertex decomposable, and
    \item $\link_{\Delta}(x)$ is pure of dimension $d-1$ and vertex decomposable.
\end{enumerate}
This definition was introduced by Provan and Billera \cite[Definition 2.1]{MR0593648}. For the nonpure version, see \cite[Definition 11.1]{MR1401765}.
It is not difficult to see that matroid complexes are pure vertex decomposable, see \cite[Theorem 3.2.1]{MR0593648}.
It was also shown in \cite{MR1401765} that for nonpure complexes, we have
\[
\text{shifted $\Longrightarrow$ vertex decomposable $\Longrightarrow$ shellable}.
\]

Recall that Kokubo and Hibi \cite{MR2260118} introduced the weakly polymatroidal ideals. We will consider the Eagon complexes of squarefree weakly polymatroidal ideals. To be more precise,

\begin{definition}
    \label{WM}
    A simplicial complex $\Delta$ on the vertex set $[n]$ is called a \Index{weakly matroid complex}, if for each distinct facets $G$ and $F$, with respect to the unique vertex $q\in G\setminus F$ such that for each $i<q$, $i\in F$ if and only if $i\in G$, there exists some $p\notin G$ with $q<p\le n$ such that $(\{p\}\cup G) \setminus \{q\}\in \Delta$.
\end{definition}

It is immediate that shifted complexes and matroid complexes are weakly matroidal. On the other hand, by \cite[Theorem 2.5]{MR2845598}, weakly matroid complexes are vertex decomposable.

In \cite{MR2771603}, Hachimori and Kashiwabara introduced hereditary-shellable complexes. According to them, a complex is called \Index{hereditary-shellable} if all its restrictions are (nonpure) shellable. For pure complexes, they \cite{hachimori2009hereditary} established the relations
\[
\xymatrixrowsep{0pc}
\xymatrixcolsep{2pc}
\xymatrix{
&\text{hereditary-shellable} \ar@{=>}[dr] & \\
\text{matroid} \ar@{=>}[ur] \ar@{=>}[dr] & & \text{shellable}. \\
&\text{vertex decomposable} \ar@{=>}[ur]&
}
\]
It is easy to find a pure vertex decomposable complex which is not hereditary-shellable. On the other hand,
\cite[Example 4.6]{hachimori2009hereditary} shows a pure hereditary-shellable complex which is not vertex decomposable.

In the following, we will give additional examples which show that there is no implication between strong  shellability with  hereditary-shellability or vertex decomposability.

\begin{example}
    [Strongly shellable, but not vertex decomposable]
    We have already mentioned that \cite[Example 4.6]{hachimori2009hereditary} gives a pure complex which is hereditary-shellable but not vertex decomposable. Actually, one can check directly that it is also strongly shellable. The facet set of this complex has the following strong shelling order:
    \begin{center}
        \begin{minipage}[h]{0.75\linewidth}
            $\{a_1, b_1, c_1\}$,
            $\{a_1, b_1, c_2\}$,
            $\{a_1, b_1, c_3\}$,
            $\{a_1, b_1, c_4\}$,
            $\{a_1, b_2, c_1\}$,
            $\{a_1, b_2, c_2\}$,
            $\{a_1, b_2, c_3\}$,
            $\{a_1, b_2, c_4\}$,
            $\{a_1, b_3, c_1\}$,
            $\{a_1, b_3, c_2\}$,
            $\{a_1, b_3, c_3\}$,
            $\{a_1, b_3, c_4\}$,
            $\{a_1, b_4, c_1\}$,
            $\{a_1, b_4, c_2\}$,
            $\{a_1, b_4, c_3\}$,
            $\{a_1, b_4, c_4\}$,
            $\{a_2, b_1, c_1\}$,
            $\{a_2, b_1, c_2\}$,
            $\{a_2, b_1, c_3\}$,
            $\{a_2, b_1, c_4\}$,
            $\{a_2, b_2, c_1\}$,
            $\{a_2, b_2, c_2\}$,
            $\{a_2, b_2, c_3\}$,
            $\{a_2, b_2, c_4\}$,
            $\{a_2, b_3, c_1\}$,
            $\{a_2, b_3, c_2\}$,
            $\{a_2, b_3, c_3\}$,
            $\{a_2, b_3, c_4\}$,
            $\{a_2, b_4, c_1\}$,
            $\{a_2, b_4, c_2\}$,
            $\{a_2, b_4, c_3\}$,
            $\{a_2, b_4, c_4\}$,
            $\{a_3, b_1, c_1\}$,
            $\{a_3, b_1, c_2\}$,
            $\{a_3, b_1, c_3\}$,
            $\{a_3, b_1, c_4\}$,
            $\{a_3, b_2, c_1\}$,
            $\{a_3, b_2, c_2\}$,
            $\{a_3, b_2, c_3\}$,
            $\{a_3, b_2, c_4\}$,
            $\{a_3, b_3, c_1\}$,
            $\{a_3, b_3, c_2\}$,
            $\{a_3, b_3, c_3\}$,
            $\{a_3, b_3, c_4\}$,
            $\{a_3, b_4, c_1\}$,
            $\{a_3, b_4, c_2\}$,
            $\{a_3, b_4, c_3\}$,
            $\{a_3, b_4, c_4\}$,
            $\{a_4, b_1, c_1\}$,
            $\{a_4, b_1, c_2\}$,
            $\{a_4, b_1, c_3\}$,
            $\{a_4, b_1, c_4\}$,
            $\{a_4, b_2, c_1\}$,
            $\{a_4, b_2, c_2\}$,
            $\{a_4, b_2, c_3\}$,
            $\{a_4, b_2, c_4\}$,
            $\{a_4, b_3, c_1\}$,
            $\{a_4, b_3, c_2\}$,
            $\{a_4, b_3, c_3\}$,
            $\{a_4, b_3, c_4\}$,
            $\{a_4, b_4, c_1\}$,
            $\{a_4, b_4, c_2\}$,
            $\{a_4, b_4, c_3\}$,
            $\{a_4, b_4, c_4\}$,
            $\{a_2, b_1, b_2\}$,
            $\{a_2, a_3, b_2\}$,
            $\{a_3, b_2, b_3\}$,
            $\{a_3, a_4, b_3\}$,
            $\{b_1, c_1, c_2\}$,
            $\{b_1, b_4, c_2\}$,
            $\{b_4, c_2, c_3\}$,
            $\{b_4, b_2, c_3\}$,
            $\{c_1, a_3, a_1\}$,
            $\{c_1, c_4, a_1\}$,
            $\{c_4, a_1, a_4\}$,
            $\{c_4, c_2, a_4\}$.
        \end{minipage}
    \end{center}
    Using the language in \cite[Example 4.6]{hachimori2009hereditary}, the above strong shelling order first starts with the matroidal part, and then proceeds by carefully arranging the 12 extra facets.
\end{example}

\begin{example}
    [Strongly shellable, but not hereditary-shellable]
    Let $\Delta$ be a pure complex with the facet set:
    \[
    \Set{ \{1, 2, 8\}, \{1, 2, 5\}, \{2, 5, 6\}, \{1, 2, 4\}, \{1, 2, 7\}, \{1, 3, 4\}, \{1, 2, 6\}, \{1, 2, 3\} }.
    \]
    It is direct to check that $\Delta$ is strongly shellable with respect to the above given order. On the other hand, the restriction of $\Delta$ to the subset $W=\Set{3,4,5,6,7,8}\subseteq \calV(\Delta)$ has facets $\{3,4\},\{5,6\},\{7\},\{8\}$. Obviously, $\Delta_W$ is not shellable. Therefore, the original complex $\Delta$ is not hereditary-shellable.
\end{example}

\begin{example}
    [Hereditary-shellable, weakly matroid, but not strongly shellable]
    Let $\Delta$ be a pure complex with the facet set:
    \[
    \Set{ \{1, 2, 6\}, \{1, 3, 4\}, \{1, 4, 6\}, \{2, 3, 5\}, \{2, 5, 6\}, \{3, 4, 5\}, \{3, 4, 6\}, \{3, 5, 6\} }.
    \]
    It is direct to check that $\Delta$ is hereditary-shellable and weakly matroid, but not strongly shellable.
\end{example}

\begin{proposition}
    \label{hereditary chain}
    Shifted complexes are hereditary-shellable.
\end{proposition}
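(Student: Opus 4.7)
The plan is to reduce the statement to the fact, already cited in the paper, that shifted complexes are (nonpure) shellable. Indeed, by definition a complex is hereditary-shellable precisely when every restriction $\Delta_W$ is shellable, so it suffices to show that the restriction of a shifted complex to any subset of vertices is again shifted (after the obvious relabeling), and then invoke the known implication ``shifted $\Longrightarrow$ shellable'' from \cite{MR1401765}.

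Concretely, I would let $\Delta$ be a shifted complex on $[n]$ and let $W = \{w_1 < w_2 < \cdots < w_k\} \subseteq [n]$. Identify $W$ with $[k]$ via the order-preserving bijection $w_\ell \mapsto \ell$. I want to verify that $\Delta_W$, viewed as a complex on $[k]$ under this identification, is shifted. So take $F \in \Delta_W$, an element $w_i \in F$, and $w_j \in W$ with $j > i$; I must check that $(F \setminus \{w_i\}) \cup \{w_j\} \in \Delta_W$. Since $F \in \Delta$ and $w_j > w_i$ in $[n]$, the shifted property of $\Delta$ gives $(F \setminus \{w_i\}) \cup \{w_j\} \in \Delta$. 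Because every element of this set lies in $W$, it also belongs to $\Delta_W$, confirming the shifted property on the relabeled complex.

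Now that restrictions of shifted complexes are shifted, I apply the nonpure implication $\text{shifted} \Longrightarrow \text{vertex decomposable} \Longrightarrow \text{shellable}$ recalled in the paragraph preceding this proposition to conclude that each $\Delta_W$ is (nonpure) shellable. By definition this says that $\Delta$ is hereditary-shellable.

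There is essentially no obstacle: the only thing to be a little careful about is the relabeling, since the shifted property is defined with respect to a particular linear order on the vertex set; but the order on $W$ inherited from $[n]$ is exactly what makes the verification above go through, so this is purely bookkeeping.
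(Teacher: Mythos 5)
Your proof is correct and follows exactly the paper's own argument: the paper's proof simply states that any restriction of a shifted complex is again shifted, and you carry out the straightforward verification of this (including the relabeling bookkeeping) before citing shifted $\Rightarrow$ shellable.
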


\begin{proof}
    It suffices to note that for a shifted complex, any restriction is also shifted.
\end{proof}

Combining the concepts of hereditary-shellable and strongly shellable, we have the following natural definition.

\begin{definition}
    A complex is called \Index{hereditarily strongly shellable} if all its restrictions are strongly shellable.
\end{definition}

Clearly, we have the following implications:
\[
\xymatrixrowsep{0pc}
\xymatrixcolsep{1.5pc}
\xymatrix{
& &   \text{hereditary-shellable}\\
\text{matroid} \ar@{=>}[r]& \text{    hereditarily strongly shellable} \ar@{=>}[ur] \ar@{=>}[dr]&  \\
& & \text{strongly shellable}
}
\]
The implications are strict.

\begin{example}
    Let $\Delta$ be the pure complex with facets:
    \[
    \{1, 2, 3\}, \{1, 2, 4\}, \{1, 2, 5\}, \{1, 2, 6\}, \{1, 3, 4\}, \{1, 3, 5\}, \{1, 3, 6\}, \{2, 4, 5\}.
    \]
    One can check that $\Delta$ is hereditarily strongly shellable.
    On the other hand, the restriction of $\Delta$ to the subset $W=\Set{4,5,6}\subseteq \calV(\Delta)$ has facets $\{4,5\}$ and $\{6\}$. Therefore, the induced complex $\Delta_W$ is not pure and the original complex $\Delta$ is not a matroid complex.
\end{example}

The implications among pure shellable complexes that we encountered in this section are summarized in Figure \ref{Fig:relation}.

\begin{figure}[!ht]
    \xymatrix{
    \text{shifted}  \ar@{=>}[ddr]\ar@{=>}[ddrr]\ar@{=>}[d] &  & \text{matroid} \ar@{=>}[d]\ar@{=>}[dll] \\
    \text{weakly matroid} \ar@{=>}[d] &  & \text{hereditarily strongly shellable} \ar@{=>}[d] \ar@{=>}[dl] \\
    \text{vertex decomposable} \ar@{=>}[dr]& \text{hereditary-shellable} \ar@{=>}[d] & \text{strongly shellable}  \ar@{=>}[dl]\\
    & \text{shellable} &   
    }
    \caption{Relations among pure shellable complexes} \label{Fig:relation}
\end{figure}
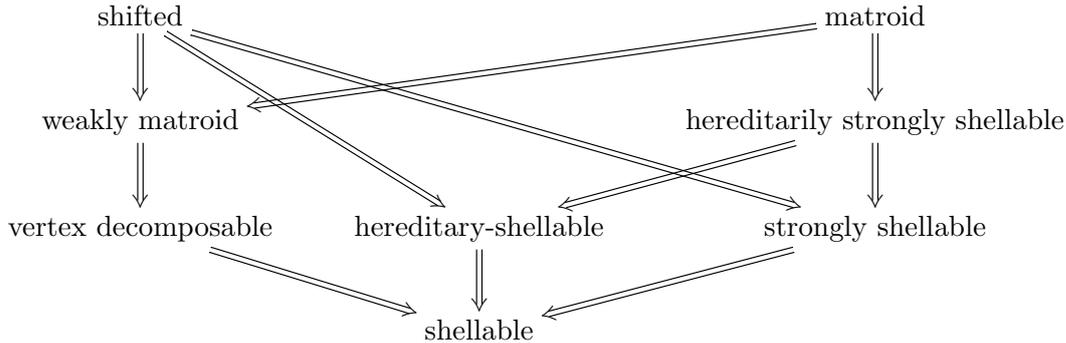

\section{Deciding strong shellability of pure complexes with $h$-assignments}

In this final section, we briefly talk about how to decide the strong shellability of a pure complex. In \cite{ISI:000292480700004} Moriyama considered a similar question for deciding the shellability of pure complexes.

Let $\Delta$ be a $d$-dimensional pure complex with $h$-vector
\[
\bdh(\Delta)=(h_0(\Delta),\dots,h_{d+1}(\Delta))\in \NN^{d+2}.
\]
In $\Delta$, a $(d-1)$-dimensional face is called a \Index{ridge} of $\Delta$. A ridge contained in only one facet will be called a \Index{boundary ridge}.

An \Index{$h$-assignment} $A$ of $\Delta$ is a assignment $A:\calF(\Delta)\to [d+1]$ such that $|A^{-1}(i)|=h_i(\Delta)$ for each $i$.  With respect to this $h$-assignment, a facet $F$ of $\Delta$ is called a \Index{candidate facet} if $F$ contains exactly $d+1-A(F)$ boundary ridges of $\Delta$.

Given a candidate facet $F$ of $\Delta$ with respect to the $h$-assignment, we can apply a \Index{removing step} by
\begin{enumerate}[i]
    \item replacing $\Delta$ by $\Delta'=\Braket{G\in \calF(\Delta):G\ne F}$, and
    \item replacing $A$ by the its restriction on $\calF(\Delta')$.
\end{enumerate}

\begin{theorem}
    [{\cite[Theorem 1.3]{ISI:000292480700004}}]
    \label{Moriyama}
    A pure complex $\Delta$ is shellable if and only if there exists an $h$-assignment such that we can remove all facets of $\Delta$ by applying the removing steps successively.
\end{theorem}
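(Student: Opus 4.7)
The plan is to connect the removal procedure directly to the Björner--Wachs restriction map of a shelling, so that reversing a shelling order yields a valid removal sequence and vice versa. For the necessity direction, start from a shelling $F_1,F_2,\dots,F_m$ of the pure $d$-dimensional complex $\Delta$, and for each $k$ consider the restriction
\[
R(F_k)\coloneqq\{v\in F_k : F_k\setminus\{v\}\in\langle F_1,\dots,F_{k-1}\rangle\},
\]
setting $A(F_k)\coloneqq |R(F_k)|$. The classical shelling identity $h_i(\Delta)=|\{k:|R(F_k)|=i\}|$ shows that $A$ is indeed an $h$-assignment. Reading the shelling backwards, at the moment one wishes to remove $F_k$ from the current complex $\Delta_k\coloneqq\langle F_1,\dots,F_k\rangle$, exactly $|R(F_k)|$ codimension-one faces of $F_k$ are shared with $\langle F_1,\dots,F_{k-1}\rangle$ and thus with at least one other facet of $\Delta_k$, while the remaining $d+1-|R(F_k)|=d+1-A(F_k)$ codimension-one faces of $F_k$ lie only inside $F_k$ and so constitute boundary ridges of $\Delta_k$. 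Hence $F_k$ is a candidate at that stage, and all facets can be peeled off.

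For the sufficiency direction, assume that an $h$-assignment $A$ and a successful removal sequence $F_{i_m},F_{i_{m-1}},\dots,F_{i_1}$ are given; reverse it to $F_{i_1},\dots,F_{i_m}$ and prove by induction on $m$ that this is a shelling. The inductive hypothesis applied to the truncated data gives that $\Delta'\coloneqq\langle F_{i_1},\dots,F_{i_{m-1}}\rangle$ is shellable, and it remains to show that $\Delta'\cap\langle F_{i_m}\rangle$ is pure of codimension one in $F_{i_m}$. The candidacy of $F_{i_m}$ in $\Delta$ means that $F_{i_m}$ has $d+1-A(F_{i_m})$ boundary ridges, equivalently that exactly $A(F_{i_m})$ of its codimension-one faces lie in $\Delta'$. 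Let $C$ denote the subcomplex of $\partial F_{i_m}$ generated by these $A(F_{i_m})$ codimension-one faces; the containment $C\seq \Delta'\cap\langle F_{i_m}\rangle$ is automatic, and the task reduces to proving the reverse containment.

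The main obstacle is that a face $\tau\subsetneq F_{i_m}$ could, a priori, belong to $\Delta'$ without sitting inside any codimension-one face of $F_{i_m}$ already in $\Delta'$. This is precisely where the global $h$-vector constraint $|A^{-1}(i)|=h_i(\Delta)$ must be used, as opposed to merely the local candidate condition. To close the gap, I would compare the $h$-vectors of $\Delta$ and $\Delta'$: since the restriction of $A$ to $\calF(\Delta')$ is, by the very hypothesis of the sequence, an $h$-assignment of $\Delta'$, one forces $h_j(\Delta')=h_j(\Delta)-\delta_{j,A(F_{i_m})}$, which via the standard $h$--$f$ transformation yields
\[
f_j(\Delta)-f_j(\Delta')=\binom{d+1-A(F_{i_m})}{j+1-A(F_{i_m})}
\]
for each $j$. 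Thus $F_{i_m}$ contributes exactly the number of new $j$-faces that would arise if $\Delta'\cap\langle F_{i_m}\rangle$ coincided with $C$, ruling out any extra face of $F_{i_m}$ appearing in $\Delta'$. This $f$-vector accounting pins down $\Delta'\cap\langle F_{i_m}\rangle=C$ and completes the induction; the base case $m=1$ is the trivial simplex.
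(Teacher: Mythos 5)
The paper itself does not prove this theorem but simply cites it from Moriyama, so the student's attempt can only be evaluated on its own merits.

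Your necessity direction is correct and is exactly the natural argument: the restriction-map statistic $A(F_k)=|R(F_k)|$ is an $h$-assignment by the shelling identity, and when reading the shelling backwards, the ridges $F_k\setminus\{v\}$ with $v\notin R(F_k)$ are precisely the boundary ridges of $\langle F_1,\dots,F_k\rangle$ at $F_k$, so $F_k$ is a candidate at its turn.

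The sufficiency direction, however, has a genuine gap at the pivotal sentence: ``since the restriction of $A$ to $\calF(\Delta')$ is, by the very hypothesis of the sequence, an $h$-assignment of $\Delta'$.'' This is not part of the hypothesis. A valid removal sequence only asserts that at each step the removed facet satisfies the boundary-ridge count; nothing in the setup promises that the truncated assignment is still an $h$-assignment of the remaining sub-complex. Moreover, the statement $h_j(\Delta')=h_j(\Delta)-\delta_{j,A(F_{i_m})}$ is \emph{equivalent} (by the $h$--$f$ transformation) to the removal of $F_{i_m}$ deleting exactly $\binom{d+1-a}{j+1-a}$ faces in each dimension $j$, where $a=A(F_{i_m})$; and that in turn is equivalent to the very conclusion $\Delta'\cap\langle F_{i_m}\rangle=C$ you are trying to establish. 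As written, the argument is circular.

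Your $f$-vector accounting idea is nevertheless the right instinct; it just has to be run globally over the whole removal sequence rather than one step at a time, since the $h$-assignment hypothesis is only available for the original $\Delta$. Write $\calD(F_{i_k})$ for the set of faces that vanish at the moment $F_{i_k}$ is removed. Your containment $C\subseteq\Delta'\cap\langle F_{i_k}\rangle$ shows $\calD(F_{i_k})$ contains the up-set of faces of $F_{i_k}$ lying in no non-boundary ridge, so $|\calD(F_{i_k})|\ge 2^{\,d+1-A(F_{i_k})}$. Summing over all $m$ steps and using both the $h$--$f$ identity and the fact that $A$ is an $h$-assignment of the \emph{original} $\Delta$,
\[
\sum_{k=1}^m |\calD(F_{i_k})| \;=\; |\Delta| \;=\; \sum_{i=0}^{d+1} h_i(\Delta)\,2^{\,d+1-i} \;=\; \sum_{k=1}^m 2^{\,d+1-A(F_{i_k})}.
\]
Since the left-hand terms dominate the right-hand terms pointwise, every inequality is an equality; hence $\calD(F_{i_k})$ is exactly the minimal up-set for each $k$, each $\Delta_{k-1}\cap\langle F_{i_k}\rangle$ is pure of codimension one, and the reversed order is a shelling. (As a by-product, the restricted assignment \emph{is} an $h$-assignment of each sub-complex, but this is a conclusion, not a hypothesis.)
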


From its proof, we know that given a shelling order of $\Delta$, one has a natural $h$-assignment in which the last facet of the shelling will be a candidate facet. Conversely, given an $h$-assignment which allows the removing steps above, we can build a shelling order by reversing the removal order.

Given an $h$-assignment, a candidate facet $F$ is called a \Index{strong candidate facet} if for each $G\ne F$, there exists a facet $H$ such that $\dis(F,H)=1$ and $\dis(G,H)=\dis(G,F)-1$.

It follows immediately from Lemma \ref{d=d-1+1} and Theorem \ref{Moriyama} that

\begin{theorem}
    A pure complex $\Delta$ is strongly shellable if and only if there exists an $h$-assignment such that we can remove all facets of $\Delta$ by applying the removing steps successively such that we remove a strong candidate facet at each step.
\end{theorem}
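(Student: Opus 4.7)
The plan is to derive this theorem directly from Lemma \ref{d=d-1+1} and Theorem \ref{Moriyama}, by tracking the natural correspondence between shelling orders and $h$-assignments admitting a complete sequence of removing steps. As the authors note just after Theorem \ref{Moriyama}, a shelling order $F_1,\ldots,F_m$ on $\calF(\Delta)$ yields an $h$-assignment under which the facets are removed in the reverse order $F_m,F_{m-1},\ldots,F_1$, and at the step where $F_j$ is removed the current subcomplex is precisely $\braket{F_1,\ldots,F_j}$, of which $F_j$ is a candidate facet. I would first record this correspondence explicitly so that it can be invoked symmetrically in each direction.

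For the forward direction, I would take a strong shelling order $F_1,\ldots,F_m$ together with its associated $h$-assignment, and show that each $F_j$ is actually a strong candidate facet of the ambient subcomplex $\braket{F_1,\ldots,F_j}$. Given any other facet $G=F_i$ of this subcomplex, so $i<j$, Lemma \ref{d=d-1+1} furnishes a facet $F_k$ with $k<j$ satisfying $\dis(F_k,F_j)=1$ and $\dis(F_i,F_k)=\dis(F_i,F_j)-1$; the bound $k<j$ guarantees $F_k\in\braket{F_1,\ldots,F_j}$, so taking $H\coloneqq F_k$ witnesses the strong candidacy of $F_j$. Conversely, starting from an $h$-assignment that supports a removal sequence in which every removed facet is a strong candidate, I would reverse the removal order to obtain a shelling order $F_1,\ldots,F_m$ via Theorem \ref{Moriyama}. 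To check the condition of Lemma \ref{d=d-1+1}, fix $i<j$; the strong candidacy of $F_j$ in $\braket{F_1,\ldots,F_j}$ applied with $G=F_i$ produces a facet $H$ of that subcomplex with $\dis(F_j,H)=1$ and $\dis(F_i,H)=\dis(F_i,F_j)-1$. Since $\dis(F_j,H)=1$ forces $H\ne F_j$, necessarily $H=F_k$ for some $k<j$, exactly as Lemma \ref{d=d-1+1} demands.

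I expect no real obstacle beyond the bookkeeping required to align the shelling order with its reverse removal order, and to confirm that the universally quantified $G$ in the definition of a strong candidate facet is always interpreted inside the current subcomplex at that removal step. Both alignments are immediate from the setup, which is why the authors describe the theorem as following ``immediately'' from the two earlier results.
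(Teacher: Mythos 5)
Your proof is correct and follows exactly the route the paper indicates: the authors state only that the theorem ``follows immediately from Lemma \ref{d=d-1+1} and Theorem \ref{Moriyama},'' and your argument supplies precisely the missing bookkeeping, translating each direction of Lemma \ref{d=d-1+1} into the corresponding property of the removal sequence via Moriyama's correspondence between shelling orders and $h$-assignments.
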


The benefit of applying this deciding method is the same as that for Moriyama's suggestion, namely, instead of checking all $|\calF(\Delta)|!$ possible cases by definition, we only need to check, roughly speaking,
\[
\frac{|\calF(\Delta)|!}
{h_0(\Delta)!\cdots h_{d+1}(\Delta)!}
\]
possible cases. The last integer is the number of  all $h$-assignments of $\Delta$.

\begin{bibdiv}
\begin{biblist}

\bib{arXiv:1311.0981}{article}{
	author={Anwar, Imran},
	author={Raza, Zahid},
	author={Kashif, Agha},
	title={Spanning simplicial complexes of uni-cyclic graphs},
	date={2015},
	ISSN={1005-3867},
	journal={Algebra Colloq.},
	volume={22},
	pages={707\ndash 710},
	url={http://dx.doi.org/10.1142/S1005386715000590},
}

\bib{MR570784}{article}{
      author={Bj{\"o}rner, Anders},
       title={Shellable and {C}ohen-{M}acaulay partially ordered sets},
        date={1980},
        ISSN={0002-9947},
     journal={Trans. Amer. Math. Soc.},
      volume={260},
       pages={159\ndash 183},
         url={http://dx.doi.org/10.2307/1999881},
}

\bib{MR1333388}{article}{
      author={Bj{\"o}rner, Anders},
      author={Wachs, Michelle~L.},
       title={Shellable nonpure complexes and posets. {I}},
        date={1996},
        ISSN={0002-9947},
     journal={Trans. Amer. Math. Soc.},
      volume={348},
       pages={1299\ndash 1327},
}

\bib{MR1401765}{article}{
      author={Bj{\"o}rner, Anders},
      author={Wachs, Michelle~L.},
       title={Shellable nonpure complexes and posets. {II}},
        date={1997},
        ISSN={0002-9947},
     journal={Trans. Amer. Math. Soc.},
      volume={349},
       pages={3945\ndash 3975},
}

\bib{ESSC}{unpublished}{
      author={Guo, Jin},
      author={Shen, Yi-Huang},
      author={Wu, Tongsuo},
       title={Edgewise strongly shellable clutters},
        note={preprint},
}

\bib{MR2724673}{book}{
      author={Herzog, J{\"u}rgen},
      author={Hibi, Takayuki},
       title={Monomial ideals},
      series={Graduate Texts in Mathematics},
   publisher={Springer-Verlag London Ltd.},
     address={London},
        date={2011},
      volume={260},
        ISBN={978-0-85729-105-9},
}

\bib{MR2771603}{article}{
      author={Hachimori, Masahiro},
      author={Kashiwabara, Kenji},
       title={Obstructions to shellability, partitionability, and sequential
  {C}ohen-{M}acaulayness},
        date={2011},
        ISSN={0097-3165},
     journal={J. Combin. Theory Ser. A},
      volume={118},
       pages={1608\ndash 1623},
         url={http://dx.doi.org/10.1016/j.jcta.2011.01.011},
}

\bib{hachimori2009hereditary}{unpublished}{
      author={Hachimori, Masahiro},
      author={Kashiwabara, Kenji},
       title={Hereditary-shellable simplicial complexes},
        date={2014},
}

\bib{MR2260118}{article}{
      author={Kokubo, Masako},
      author={Hibi, Takayuki},
       title={Weakly polymatroidal ideals},
        date={2006},
        ISSN={1005-3867},
     journal={Algebra Colloq.},
      volume={13},
       pages={711\ndash 720},
}

\bib{arXiv:1601.00456}{article}{
      author={Moradi, Somayeh},
      author={Khosh-Ahang, Fahimeh},
       title={{Expansion of a simplicial complex}},
      eprint={arXiv:1601.00456},
}

\bib{MR2845598}{article}{
      author={Mohammadi, Fatemeh},
       title={Powers of the vertex cover ideal of a chordal graph},
        date={2011},
        ISSN={0092-7872},
     journal={Comm. Algebra},
      volume={39},
       pages={3753\ndash 3764},
         url={http://dx.doi.org/10.1080/00927872.2010.512582},
}

\bib{ISI:000292480700004}{article}{
      author={Moriyama, Sonoko},
       title={{Deciding Shellability of Simplicial Complexes with
  h-Assignments}},
    language={{English}},
        date={{2011}},
        ISSN={{0916-8508}},
     journal={IEICE Trans. Fundamentals},
      volume={{E94A}},
       pages={{1238\ndash 1241}},
}

\bib{arXiv:1511.04676}{article}{
      author={Moradi, Somayeh},
      author={{Rahmati-Asghar}, Rahim},
       title={{On the Stanley-Reisner ideal of an expanded simplicial
  complex}},
      eprint={arXiv:1511.04676},
}

\bib{MR2849819}{book}{
      author={Oxley, James},
       title={Matroid theory},
     edition={Second},
      series={Oxford Graduate Texts in Mathematics},
   publisher={Oxford University Press},
     address={Oxford},
        date={2011},
      volume={21},
        ISBN={978-0-19-960339-8},
}

\bib{MR0593648}{article}{
      author={Provan, J.~Scott},
      author={Billera, Louis~J.},
       title={Decompositions of simplicial complexes related to diameters of
  convex polyhedra},
        date={1980},
        ISSN={0364-765X},
     journal={Math. Oper. Res.},
      volume={5},
       pages={576\ndash 594},
}

\bib{MR1453579}{book}{
      author={Stanley, Richard~P.},
       title={Combinatorics and commutative algebra},
     edition={Second},
      series={Progress in Mathematics},
   publisher={Birkh\"auser Boston Inc.},
     address={Boston, MA},
        date={1996},
      volume={41},
        ISBN={0-8176-3836-9},
}

\bib{MR2100472}{article}{
      author={Zheng, Xinxian},
       title={Resolutions of facet ideals},
        date={2004},
        ISSN={0092-7872},
     journal={Comm. Algebra},
      volume={32},
       pages={2301\ndash 2324},
}

\end{biblist}
\end{bibdiv}
\end{document}